\def\url@smallstyle{%
  \@ifundefined{selectfont}{\def\UrlFont{\sf}}{\def\UrlFont{\small\ttfamily}}}
\numberwithin{equation}{section}
\theoremstyle{plain}
    \newtheorem{theorem}[equation]{Theorem}
    \newtheorem{lemma}[equation]{Lemma}
    \newtheorem{corollary}[equation]{Corollary}
    \newtheorem{proposition}[equation]{Proposition}
    \newtheorem*{theorem*}{Theorem}
    \newtheorem*{proposition*}{Proposition}
    \newtheorem*{corollary*}{Corollary}
    \newtheorem*{lemma*}{Lemma}
    \newtheorem*{conjecture*}{Conjecture}
    \newtheorem{definition-theorem}[equation]{Definition/Theorem}
    \newtheorem{definition-lemma}[equation]{Definition/Lemma}
\theoremstyle{definition}
    \newtheorem{definition}[equation]{Definition}
    \newtheorem{example}[equation]{Example}
    \newtheorem{remark}[equation]{Remark}
    \newtheorem{remarks}[equation]{Remarks}
    \newcommand{\R}{\mathbb{R}}
    \newcommand{\C}{\mathbb{C}}
    \newcommand{\N}{\mathbb{N}}
    \renewcommand{\H}{{\mathcal H}}
   	\renewcommand{\phi}{\varphi}
    \let\epsilon\varepsilon
    \newcommand{\Bounded}{\mathfrak{B}}
    \newcommand{\Compact}{\mathfrak{K}}
\newcommand{\argument}{\underbar{\phantom{a}}}
\DeclareMathOperator{\Hom}{Hom}
\newcommand{\SL}{\operatorname{SL}}
\DeclareMathOperator{\id}{id}
\DeclareMathOperator{\lspan}{span}
\DeclareMathOperator{\image}{image}
\DeclareMathOperator{\Mod}{Mod}
\DeclareMathOperator{\Ind}{Ind}
\DeclareMathOperator{\Res}{Res}
\newcommand{\alg}{\mathrm{alg}}
\newcommand{\cb}{\mathrm{cb}}
\newcommand{\coloneq}{:=}
\begin{document}
\title{\texorpdfstring{Adjoint functors between categories of Hilbert $C^*$-modules}{Adjoint functors between categories of Hilbert C*-modules}}
\author{Pierre Clare \and Tyrone Crisp \thanks{Partially supported by the Danish National Research Foundation through the Centre for Symmetry and Deformation (DNRF92).} \and Nigel Higson \thanks{Partially supported by the US National Science Foundation DMS-1101382. }}

\date{}
\def\parsedate #1:20#2#3#4#5#6#7#8\empty{#4#5/#6#7/20#2#3}
\def\moddate#1{\expandafter\parsedate\pdffilemoddate{#1}\empty}
%\date{\small \sc File  last modified on  \moddate{\jobname.tex} and processed on \today}

\maketitle

\begin{abstract}
Let $E$ be a (right) Hilbert module over a $C^*$-algebra $A$.  If $E$ is equipped with a left action of a second $C^*$-algebra $B$, then tensor product with $E$ gives rise to a functor from the category of Hilbert $B$-modules to the category of Hilbert $A$-modules.  The purpose of this paper is to study adjunctions between functors of this sort.  We shall introduce a new kind of adjunction relation, called a local adjunction, that is weaker than the standard concept from category theory. We shall give several examples, the most important of which is the functor of parabolic induction in the tempered representation theory of real reductive groups.  Each  local adjunction gives rise to an ordinary adjunction of functors between categories of Hilbert space representations.  In this way we shall show that the parabolic induction functor has a simultaneous left and right adjoint, namely the parabolic restriction functor constructed in \cite{CCH_ups}.
\end{abstract} 

{Keywords:} 
 Hilbert $C^*$-modules; adjoint functors; parabolic induction

{MSC2010:}
 46L08, 18A40

\section{Introduction}\label{intro_section}

Let $A$ be a $C^*$-algebra and denote by ${}_A\H$ the category of non-degenerate Hilbert space representations of $A$.   This is obviously a category of interest when $A$ is   a group $C^*$-algebra, since it is isomorphic to the category of unitary   representations of the group.   Similarly, if $A$ is the \emph{reduced} $C^*$-algebra of a real reductive group, then ${}_A\H$ is isomorphic to the category of \emph{tempered} unitary representations of the group.

In addition, denote by $\H_A$ the category of right Hilbert $A$-modules (a Hilbert module is a  Banach module whose norm is obtained from an associated $A$-valued inner product; see  \cite{BlM}, \cite{Lance}, or Section~\ref{background_section} for a quick review).   The role of this category in representation theory is a bit less clear, but for example if $A$ is the reduced $C^*$-algebra of a real reductive group $G$, then each discrete series representation of $G$ determines an object in $\H_A$.  More generally, the category $\H_A$ captures the topology of the dual space $\widehat A$ of irreducible representations  of $A$ (this is the tempered dual in our reduced group $C^*$-algebra example).    In contrast, the category ${}_A\H$ of Hilbert space representations is more closely related to the structure of the dual as a set or measurable space.

In this paper we shall study the categories  ${}_A \H$ and  $\H_A$  with a particular view to the case of   the reduced $C^*$-algebra of a real reductive Lie group $G$. We shall further develop the  analysis of the parabolic induction functor 
\[
\Ind_P^G  \colon {}_{C^*_r (L)}\H  \longrightarrow {}_{C^*_r (G)}\H
\]
from the perspective of Hilbert modules that was  begun in \cite{Clare_pi} and \cite{CCH_ups}.  Here  $L$ is a Levi subgroup of a parabolic subgroup   $P\subseteq G$.  We shall examine the relationship between parabolic induction and the functor of parabolic restriction 
 \[
\Res^G_P  \colon {}_{C^*_r (G)}\H  \longrightarrow {}_{C^*_r (L)}\H
\]
that we introduced in \cite{CCH_ups}.  We shall prove that parabolic induction and restriction are left and right adjoints of one another. In fact we shall prove this as a consequence of a stronger statement involving the Hilbert module categories $\H_{C^*_r(G)}$ and $\H_{C^*_r (L)}$.  See Theorems~\ref{main-reductive-thm} and \ref{reductive-Hilbertspace-theorem}.

If $A$ and $B$ are $C^*$-algebras, then by a \emph{correspondence from $A$ to $B$} we shall mean a Hilbert $B$-module equipped with a nondegenerate representation of $A$ as adjointable Hilbert $B$-module endomorphisms.  Associated to such a correspondence $F$ there are functors 
\[
{}_B\H \longrightarrow {}_A\H\quad \text{and} \quad \H_A \longrightarrow \H_B
\]
that are constructed using Hilbert module tensor products.  The functors of parabolic induction and restriction from \cite{Clare_pi} and \cite{CCH_ups} are of this type. 

In the algebraic setting, where $A$ and $B$ are rings with unit, it is well known that if $F$ is an $A$-$B$-bimodule, then the associated   tensor product functors 
\[
{}_B\operatorname{Mod}  \longrightarrow {}_A\operatorname{Mod}
\quad \text{and} \quad \operatorname{Mod}_A \longrightarrow \operatorname{Mod}_B
\]
between left and right module categories  always admit  right adjoints, namely $X \mapsto \Hom_A(F,X) $ and $Y\mapsto \Hom_B(F,Y)$ respectively. But in the $C^*$-setting the extra symmetry imposed by the $*$-operation means that every right adjoint is also a left adjoint, and the existence of an  adjoint functor  in these circumstances is a much more delicate matter.

Kajiwara, Pinzari and Watatani \cite{KPW} have obtained necessary and sufficient conditions on a correspondence $F$ for the tensor-product functor between Hilbert module categories to admit an adjoint. Unfortunately their conditions, which we shall review in Section~\ref{full_adj_section},  immediately rule out many naturally-occurring examples of correspondences. Most notably from our point of view, the conditions are not satisfied by the correspondences associated to parabolic induction and restriction. Comparing with the theory of smooth representations of $p$-adic reductive groups---where, as Bernstein showed  (\cite{Bernstein}, cf. \cite{Renard}),  the parabolic-induction functor admits both left and right adjoints, and where those adjoints are close to being identical---we were led to look for a weaker notion of adjunction for Hilbert modules that  would apply in particular to parabolic induction. The main novelty  of this paper is to define such a notion, and to study some of its properties. 

If $F$ is a correspondence from $A$ to $B$, and $E$ is a correspondence from $B$ to $A$, we shall say that the associated tensor-product functors 
\[
 \H_A\stackrel{\otimes _A F}\longrightarrow  \H_B \quad \text{and} \quad   \H_B\stackrel{\otimes _B E}\longrightarrow \H_A
 \]
  are \emph{locally adjoint} if  there are natural isomorphisms
\[ 
\Compact_B(  X\otimes _A F , Y)\stackrel \cong \longrightarrow  \Compact_A(X,  Y\otimes _B E )
\]
between   spaces of  \emph{compact} Hilbert module  operators.     See Definition \ref{ladj_definition} for   details. Not every morphism between Hilbert modules is compact, so our definition is not the usual definition of an adjunction.  Nor is it obtained from the usual definition by adjusting   to take into account the $C^*$-structure \cite{GLR} on the categories $\H_A$ and $\H_B$.  But if  two tensor product  functors  are adjoint in the usual sense, then the natural isomorphisms   coming from the adjunction restrict to  isomorphisms between spaces of  compact operators, as above, and so adjunction implies local adjunction. See Corollary~\ref{adj_implies_ladj_corollary}.  The converse is rarely true: in general local adjunction is a genuinely weaker condition. 

The commutative case illustrates the distinction between adjoint and local adjoint quite well.  If $Y$ is a quotient of a compact Hausdorff space $X$, then the pullback functor $\H_{C(Y)}\to \H_{C(X)}$ has an adjoint if and only if $X$ is a finite cover of $Y$, while it has  a local adjoint if and only if $X$ is a finite \emph{branched} cover. See Section~\ref{sec-commutative}.  In addition, if $U$ is an open subset of $X$, then the pushforward $\H_{C_0(U)}\to \H_{C(X)}$ always has a local adjoint, but it has an adjoint if and only if $U$ is both open and  closed in $X$.

In Sections~\ref{adj_bimod_section} and \ref{sec-indexes} we  characterize locally adjoint pairs of tensor product functors, and show that the condition of local adjointability is equivalent to the condition of ``finite numerical index'' considered in \cite{KPW}.   
Returning  to categories of Hilbert space representations, we show that if two tensor product functors  are locally adjoint, then the associated functors between the Hilbert space representation categories ${}_A\H$ and ${}_B\H$  are (two-sided) adjoints in the usual sense.  See Theorem~\ref{rep-adjunction-theorem}.
 
 On the basis of all  this it is easy to analyze several examples, and we shall do so in Section~\ref{examples_section}.  Our main example of parabolic induction is considered in Section~\ref{parabolic_section}.  As a particular consequence  we show that the parabolic restriction functor constructed in \cite{CCH_ups} is a two-sided adjoint 
to the parabolic induction functor from tempered unitary representations of a Levi factor of a reductive group $G$ to tempered unitary representations of $G$. This adjoint functor appears to be new.

\section{Background and Notation}\label{background_section}

\subsection{Hilbert Modules and Correspondences}

Let $A$ be a $C^*$-algebra. Recall that a   \emph{Hilbert $A$-module} is a right $A$-module $X$ that is equipped with an inner product 
\[
\langle\,\argument\,,\,\argument\,\rangle :X\times X\longrightarrow  A
\]
that is $\C$-antilinear in the first variable, $\C$-linear in the second variable, and satisfies
\[ \langle x,y a\rangle = \langle x,y\rangle  a,\qquad \langle x,y\rangle ^* = \langle y, x\rangle \quad \text{and}\quad  \langle x,x\rangle \geq 0\]
for all $x,y\in X$ and $a\in A$.  In addition the  formula 
\[
\|x\|_X^2 \coloneq \|\langle x,x\rangle  \|_A 
\]
is required to define  a complete norm on $X$.  See for example \cite[Chapter 8]{BlM} or \cite{Lance} for an introduction to this concept. (But note that the name ``Hilbert $C^*$-module'' is used in \cite{BlM} to refer to what we are calling Hilbert module, while the  name ``Hilbert module''  is used in \cite{BlM} to refer to a different concept.)

A map $T:X\to Y$ between Hilbert $A$-modules is \emph{adjointable} if there is a map $T^*\colon Y \to X $ (necessarily unique) satisfying 
\[
\langle Tx,y\rangle  = \langle x,T^*y\rangle 
\]
for all $x\in X$ and $y\in Y$.  Adjointable operators are automatically $A$-linear and bounded, but the converse is not true, in general. We shall denote by $\Bounded_A(X,Y)$ the Banach space of all adjointable operators from  $X$ to $Y$. The space $\Bounded_A(X,X)$ of adjointable operators from $X$ to itself is a $C^*$-algebra in the operator norm.  

\begin{definition}
\label{nondegen-def}
Let $B$ be a second $C^*$-algebra and let $F$ be a Hilbert $B$-module. A $*$-homomorphism $A\to \Bounded_B(F,F)$ is \emph{nondegenerate} if the elements $af$ ($a\in A$, $f\in F$) span a dense subspace of $F$. 
\end{definition}

\begin{definition}
By a \emph{correspondence from $A$ to $B$} we shall mean a Hilbert $B$-module $F$ equipped with  a nondegenerate $*$-homomorphism from $A$ into $\Bounded_B(F,F)$.  
\end{definition}

\begin{remarks} 
A correspondence from $A$ to $B$ is in particular an $A$-$B$ bimodule, but the definition is asymmetric in that  no  $A$-valued inner product is implied.  The condition of nondegeneracy will not play a crucial role in what follows.  Nevertheless we shall assume it since it in any case holds in the main examples of interest to us.
\end{remarks}

\subsection{Compact Operators}

If $X$ and $Y$ are Hilbert $A$-modules, and if $x\in X$ and $y\in Y$, then the formula
\begin{equation}
\label{eq-rank-one1}
y\otimes x^* \colon z \mapsto  y\langle x,z\rangle
\end{equation}
defines an adjointable operator from $X$ to $Y$; the adjoint is $x\otimes y^*$.
The operator-norm closure of the linear span of the operators 
\begin{equation}
\label{eq-rank-one2}
y\otimes x^* \in \Bounded_A(X,Y)
\end{equation}
is by definition the subspace of \emph{$A$-compact} operators, denoted $\Compact_A(X,Y)$. 
The composition, on either side, of a compact operator with an adjointable operator is compact.

To add some algebraic substance to the notation used in (\ref{eq-rank-one1}) and (\ref{eq-rank-one2}), we introduce the following concept:

 \begin{definition}
\label{conjugate_definition1}
Let $X$ be a {Hilbert $A$-module}.   The \emph{conjugate} of $X$, denoted $X^*$, is the complex conjugate of the vector space $X$, equipped with left $A$-module structure  defined by 
\[
a\cdot x^*   = (x a^* )^*.
\]
Here $x^*\in X^*$ denotes the image of $x\in X$ under the obvious conjugate $\C$-linear isomorphism $X\to X^*$.
 \end{definition}

\begin{remark}
The term \emph{adjoint} would usually be a more appropriate name for $X^*$ than \emph{conjugate}, but   the word will be used quite heavily in other senses in this paper. \end{remark}

Using the  left $A$-module structure on $X^*$ we find that 
\[
y\cdot a \otimes x^* = y \otimes a\cdot x^* \in \Bounded_A(X,Y) ,
\]
and so the formula (\ref{eq-rank-one1}) defines a $\C$-linear map 
\begin{equation}
\label{eq-tp-cpts}
 Y\otimes _A^{\text{alg}} X^* \longrightarrow \Bounded_A (X,Y) .
\end{equation}
The compact operators therefore constitute a completion of the algebraic tensor product  $Y\otimes _A^{\text{alg}} X^*$ within $\Bounded _A (X,Y)$.  We shall return to this perspective on compact operators in a little while.

 We shall need one more idea about compact operators.  Let $X$ be a Hilbert $A$-module.  The   space $\Compact _A (A,X)$ of compact adjointable operators from the Hilbert module $A$ to $X$ is itself a Hilbert $A$-module under the inner product 
 \[
 \langle S, T\rangle = S^* T
 .
 \]
 Here the operator $S^*T\colon A \to A$ is left multiplication by some unique element of $A$, and we identify $S^*T$ with that element so as to obtain an $A$-valued inner product.  Each operator in $\Compact _A (A,X)$ is of the form 
 \[
 T \colon a \mapsto xa
 \]
 for some unique element $x\in X$. Moreover all these operators are adjointable, with adjoint 
 \[
 T^* \colon y \mapsto \langle x ,y\rangle.
 \]
Finally, $T$ and $T^*$  are compact,   as can be seen using an approximate identity in $A$.  We arrive at the following results (see also \cite[Proposition 8.1.11]{BlM}). 
 
 \begin{lemma}
 \label{lem-compacts}
 The Hilbert $A$-module $X$ is isomorphic to the Hilbert $A$-module  $\Compact _A (A,X)$ via the map  that associates to $x\in X$ the compact operator $a \mapsto x a$. \qed
 \end{lemma}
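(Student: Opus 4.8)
The plan is to verify that the assignment $\Phi\colon x \mapsto T_x$, where $T_x\colon a\mapsto xa$, defines an isomorphism of Hilbert $A$-modules $X \to \Compact_A(A,X)$. Since a surjective, $A$-linear, inner-product-preserving map between Hilbert modules is automatically a (unitary) isomorphism, it suffices to establish these three properties, together with the fact that $\Phi$ really takes values in the compacts.

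First I would pin down the adjoint and the inner products. A one-line computation with the module axioms gives $\langle T_x a, z\rangle = a^*\langle x,z\rangle = \langle a, \langle x,z\rangle\rangle_A$, so $T_x$ is adjointable with $T_x^*\colon z\mapsto \langle x,z\rangle$. Then $(T_x^* T_y)(a) = \langle x, ya\rangle = \langle x,y\rangle a$, that is, $T_x^* T_y$ is left multiplication by $\langle x,y\rangle$; under the inner product $\langle S,T\rangle = S^*T$ this says $\langle T_x, T_y\rangle = \langle x,y\rangle$. Hence $\Phi$ preserves inner products, so it is isometric and in particular injective, and $\C$- and $A$-linearity are immediate from $T_{\lambda x+y}=\lambda T_x + T_y$ and $T_{xa} = T_x a$ (for the latter, the right action on $\Compact_A(A,X)$ is precomposition with left multiplication on $A$, so $(T_x a)(b) = T_x(ab) = (xa)b$).

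The substantive step is to realize each $T_x$ as a \emph{compact} operator and to obtain surjectivity, and both follow from a single approximate-identity argument. Writing the rank-one operator of \eqref{eq-rank-one1} for inputs in $A$ gives $x\otimes b^*\colon a\mapsto x\langle b,a\rangle = x b^* a = T_{xb^*}(a)$, so every rank-one operator $A\to X$ is of the form $T_{xb^*}$ and hence lies in the image of $\Phi$. Taking a self-adjoint approximate identity $(u_\lambda)$ and setting $c=\langle x,x\rangle$, the estimate $\langle xu_\lambda - x, xu_\lambda - x\rangle = u_\lambda c u_\lambda - u_\lambda c - c u_\lambda + c \to 0$ shows $xu_\lambda \to x$ in $X$; since $\Phi$ is isometric this yields $x\otimes u_\lambda^* = T_{xu_\lambda}\to T_x$ in operator norm, exhibiting $T_x$ as a norm-limit of rank-one operators and so compact. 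Finally, $\Phi$ isometric together with $X$ complete forces the image of $\Phi$ to be closed; as it contains every rank-one operator, and these span a dense subspace of $\Compact_A(A,X)$ by definition, $\Phi$ is onto. Assembling the pieces, $\Phi$ is the asserted isomorphism. I expect the only genuine subtlety to be the convergence $xu_\lambda\to x$, which simultaneously delivers both compactness and surjectivity; the remaining points are a direct unwinding of the definitions recorded above.
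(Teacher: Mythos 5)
Your proof is correct and follows essentially the same route as the paper's own argument (given in the discussion preceding the lemma): identify the inner product $\langle S,T\rangle=S^*T$, compute the adjoint $T_x^*\colon z\mapsto\langle x,z\rangle$, identify rank-one operators $A\to X$ with maps of the form $T_{xb^*}$, and use an approximate identity in $A$ to get compactness of $T_x$ and surjectivity. You have merely filled in the details the paper leaves implicit, such as the norm computation showing $xu_\lambda\to x$ and the closed-isometric-image argument for surjectivity.
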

 
 \begin{lemma}
 \label{lem-compacts2}
  The conjugate $X^*$ of a Hilbert $A$-module $X$ is isomorphic to the   $A$-module  $\Compact _A (X,A)$ via the map  that associates to $x^*\in X^*$ the compact operator $y \mapsto \langle x,y\rangle$. \qed
 \end{lemma}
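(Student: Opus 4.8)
The plan is to verify that the map $\Phi\colon X^*\to \Compact_A(X,A)$ given by $\Phi(x^*)=\bigl(y\mapsto \langle x,y\rangle\bigr)$ is a well-defined, $A$-linear, isometric bijection. The first point is that $\Phi$ genuinely takes values in $\Compact_A(X,A)$. The operator $y\mapsto \langle x,y\rangle$ is exactly the adjoint $T_x^*$ of the operator $T_x\colon a\mapsto xa$ analyzed in the discussion preceding Lemma~\ref{lem-compacts}, and that discussion already records that $T_x^*$ is compact; so no separate argument is needed here.

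Next I would establish linearity. Additivity and $\C$-linearity of $\Phi$ follow at once from the fact that the inner product is additive and conjugate-linear in its first variable, once the conjugate-vector-space structure on $X^*$ is unwound (so that $\lambda\cdot x^* = (\ol\lambda x)^*$). For the left $A$-action, recall that $a\cdot x^* = (xa^*)^*$ and that $a$ acts on an operator $S\colon X\to A$ by post-composition with left multiplication by $a$. The identity $\langle xa^*,y\rangle = a\langle x,y\rangle$, which follows from $\langle x,y\rangle^*=\langle y,x\rangle$ together with $\langle x,ya\rangle=\langle x,y\rangle a$, then gives $\Phi(a\cdot x^*)=a\cdot\Phi(x^*)$, so $\Phi$ is left $A$-linear. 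Injectivity I would obtain from isometry: $\|\Phi(x^*)\|=\|T_x^*\|=\|T_x\|=\|x\|$, the last equality being Lemma~\ref{lem-compacts}. An isometry is injective, and its image, being the isometric image of the complete space $X^*$, is closed.

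The crux --- and the step I expect to need the most care --- is surjectivity. Since $\Phi$ is a linear isometry, its image is a closed linear subspace of $\Compact_A(X,A)$, and by definition the target is the operator-norm closure of the linear span of the rank-one operators $b\otimes x^*\colon y\mapsto b\langle x,y\rangle$ with $b\in A$ and $x\in X$. The identity $b\langle x,y\rangle = \langle xb^*,y\rangle$, again a direct consequence of the inner-product axioms, exhibits each such rank-one operator as $\Phi\bigl((xb^*)^*\bigr)$. Hence the image of $\Phi$ contains every rank-one operator, therefore their whole linear span, and therefore --- being closed --- all of $\Compact_A(X,A)$. This proves surjectivity and completes the identification of $X^*$ with $\Compact_A(X,A)$. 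I note that the result could instead be assembled by composing the conjugation isomorphism $X^*\cong X$, the isomorphism $X\cong\Compact_A(A,X)$ of Lemma~\ref{lem-compacts}, and the adjoint map $T\mapsto T^*$; but the direct verification above makes the $A$-module structure more transparent.
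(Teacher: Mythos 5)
Your proof is correct and follows essentially the same route as the paper, which leaves the lemma as an immediate consequence of the preceding discussion (the operators $T_x\colon a\mapsto xa$ and their adjoints $T_x^*\colon y\mapsto\langle x,y\rangle$ being compact) together with Lemma~\ref{lem-compacts}. Your explicit verification of $A$-linearity, isometry, and surjectivity via the rank-one operators $b\otimes x^*=\Phi\bigl((xb^*)^*\bigr)$ simply fills in the details the paper treats as routine.
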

 
\subsection{Functors on Hilbert Modules}

\begin{definition}
If $A$ is  a  $C^*$-algebra, then  denote by  $\H_A$   the category whose objects are right Hilbert $A$-modules and whose morphisms are adjointable maps between Hilbert modules.  
\end{definition}

Within the algebraic context, bimodules give rise to functors between module categories via tensor product.  This is so in the Hilbert module context, too, thanks to the following construction.

\begin{definition}
\label{def-itp}
Let $X$ be a Hilbert $A$-module and let $F$ be a correspondence from $A$ to $B$. The (\emph{internal}) \emph{tensor product} $X\otimes_A F$, which is a Hilbert $B$-module,  is the completion of  the algebraic tensor product $X\otimes^{\alg}_A F$ in the norm induced by the $B$-valued inner product
\[ 
\langle x_1\otimes f_1, x_2\otimes f_2\rangle_{X\otimes _A F}  \coloneq \langle f_1, \langle x_1,x_2\rangle_X \cdot f_2\rangle_F.
\]
\end{definition}

 See \cite[Chapter 4]{Lance} for basic information on the internal tensor product construction. One has natural isomorphisms 
 \[
 A\otimes_A F\stackrel{\cong}\longrightarrow  F\quad \text{and}\quad  F\otimes_B B  \stackrel{\cong}\longrightarrow  F
 \]
  via multiplication.

If $F$ is a correspondence from $A$ to $B$, then internal tensor product with $F$ gives rise to a \emph{tensor product functor} 
\[
F \colon \H_A \longrightarrow \H_B
\]
(as indicated, we shall use the same letter for the bimodule and the functor), since the tensor product of an adjointable operator with the identity operator on $F$ is an adjointable operator between tensor product modules. 

It is interesting to note that  subject to a natural continuity condition and compatibility with the adjoint operation, every functor between Hilbert module categories is a tensor product functor.  We shall not use this fact, but here is a short summary.

\begin{definition}
A functor $ F$ between  categories of Hilbert modules is called a \emph{$*$-functor} if it is $\C$-linear on morphisms, and satisfies $  F(T^*)=  F(T)^*$ for every adjointable operator $T$. 
\end{definition}

\begin{definition} A $*$-functor $F :\H_A\to\H_B$ is \emph{strongly continuous} if for every object $X\in \H_A$, the $*$-homomorphism
\[
  F: \Compact_A(X,X) \to \Bounded_B(F(X), F(X) )
  \]
   is nondegenerate. 
\end{definition}

\begin{theorem}\textup{\cite[Theorem 5.3]{Blecher}}
\label{Blecher_theorem}
The category of strongly continuous $*$-functors $\H_A\to \H_B$ \textup{(}and natural transformations\textup{)} is   equivalent to the category of correspondences from $A$ to $B$ \textup{(}and adjointable operators compatible with the $A$-$B$ bimodule structure\textup{)}.
\end{theorem}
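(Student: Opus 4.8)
The plan is to prove the stated equivalence in the style of the Eilenberg--Watts theorem, by exhibiting explicit functors in both directions and showing that they are mutually inverse. In one direction we already have the assignment $F \mapsto {\otimes_A F}$, and the first thing to verify is that this functor is a strongly continuous $*$-functor. That it is a $*$-functor is immediate from $(T \otimes 1_F)^* = T^* \otimes 1_F$. For strong continuity, fix $X \in \H_A$ and let $(e_\mu)$ be an approximate identity for the $C^*$-algebra $\Compact_A(X,X)$; since $\Compact_A(X,X)$ acts nondegenerately on $X$ we have $e_\mu x \to x$, and hence for any elementary tensor $(e_\mu \otimes 1_F)(x \otimes f) = (e_\mu x) \otimes f \to x \otimes f$. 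As such tensors are dense, $\Compact_A(X,X)$ acts nondegenerately on $X \otimes_A F$, as required.

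In the other direction I would send a strongly continuous $*$-functor $\Phi \colon \H_A \to \H_B$ to the Hilbert $B$-module $\Phi(A)$, where $A$ is regarded as a Hilbert module over itself. To make $\Phi(A)$ into a correspondence I would use the identification $A \cong \Compact_A(A,A)$ of Lemma~\ref{lem-compacts} (with $X = A$), under which $a \in A$ acts by left multiplication. Strong continuity says precisely that the resulting $*$-homomorphism $A \cong \Compact_A(A,A) \to \Bounded_B(\Phi(A), \Phi(A))$ induced by $\Phi$ is nondegenerate, so $\Phi(A)$ becomes a correspondence from $A$ to $B$. On morphisms, a natural transformation $\eta \colon \Phi \to \Psi$ is sent to its component $\eta_A \colon \Phi(A) \to \Psi(A)$; naturality tested against the left-multiplication operators shows that $\eta_A$ intertwines the left $A$-actions, so it is an adjointable bimodule map.

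The heart of the proof is a natural isomorphism $X \otimes_A \Phi(A) \cong \Phi(X)$. Writing $R_x \in \Compact_A(A,X)$ for the operator $a \mapsto xa$ of Lemma~\ref{lem-compacts}, I would define the map on elementary tensors by $x \otimes \xi \mapsto \Phi(R_x)\xi$; this is $A$-balanced because $R_{xa} = R_x \circ L_a$. It is moreover isometric: since $R_x^* R_y \colon A \to A$ is left multiplication by $\langle x, y\rangle_X$, and $\Phi$ implements the left action on $\Phi(A)$ by exactly these operators, one computes
\[
\langle \Phi(R_x)\xi, \Phi(R_y)\zeta\rangle_{\Phi(X)} = \langle \xi, \Phi(R_x^* R_y)\zeta\rangle = \langle \xi, \langle x,y\rangle_X \cdot \zeta\rangle = \langle x \otimes \xi, y \otimes \zeta\rangle_{X \otimes_A \Phi(A)}.
\]
The map therefore extends isometrically to the completion, and naturality in $X$ follows from $R_{Tx} = T \circ R_x$. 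The one genuinely analytic step---and the step I expect to be the main obstacle---is surjectivity, that is, density of the image. Here I would factor each rank-one operator as $x \otimes y^* = R_x R_y^*$ in $\Compact_A(X,X)$, so that $\Phi(x \otimes y^*)\eta = \Phi(R_x)\bigl(\Phi(R_y)^*\eta\bigr)$ lies in the image; since rank-one operators are dense in $\Compact_A(X,X)$ and $\Phi$ is contractive, the vectors $\Phi(K)\eta$ with $K \in \Compact_A(X,X)$ are approximated by such elements, and strong continuity guarantees these are dense in $\Phi(X)$. Hence the image is dense, and the isometry is an isomorphism.

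Finally I would assemble these into an equivalence. The natural isomorphism of the previous paragraph is exactly the statement that $F \mapsto {\otimes_A F}$ is essentially surjective, while the reverse composite returns $({\otimes_A F})(A) = A \otimes_A F \cong F$ via the multiplication isomorphism recorded after Definition~\ref{def-itp}, and one checks this identification carries the reconstructed left $A$-action to the original one. Fullness and faithfulness on morphisms then follow by evaluating a natural transformation between functors ${\otimes_A F}$ and ${\otimes_A F'}$ at the object $A$ and invoking the same isomorphism to recover an adjointable bimodule map $F \to F'$. This establishes that the two constructions are mutually inverse and completes the equivalence of categories.
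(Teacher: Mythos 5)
Your proof is correct and follows exactly the route the paper indicates: the paper cites Blecher's Theorem 5.3 without proof, recording only that the equivalence sends a functor $\Phi$ to the correspondence $\Phi(A)$ and a correspondence $F$ to the tensor product functor $\otimes_A F$, which is precisely the Eilenberg--Watts-style argument you carry out. The essential steps---the isometry $X \otimes_A \Phi(A) \to \Phi(X)$, $x \otimes \xi \mapsto \Phi(R_x)\xi$, verified via $R_x^*R_y = L_{\langle x,y\rangle}$, and the use of strong continuity together with the factorization $x\otimes y^* = R_xR_y^*$ to obtain density of the image---are exactly where the content lies, and your treatment of them is sound.
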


The equivalence is given in one direction by sending a functor $F$ to the correspondence $ F(A)$, and in the other direction by sending a correspondence $F$ to the associated tensor product functor. 

\begin{remark}
\label{rem-blecher}
In addition  a strongly continuous $*$-functor $\H_A\to \H_B$ is an equivalence (with inverse a strongly continuous $*$-functor) if and only if the associated correspondence is a \emph{Morita equivalence} between $A$ and $B$: see \cite[8.1.2, 8.2.20]{BlM}. If $A$ and $B$ are Morita equivalent, then the dual spaces $\widehat{A}$ and $\widehat{B}$ are homeomorphic (moreover the  converse also holds if $A$ and $B$ are commutative).  So  we see that $\H_A$ carries information about the structure of $\widehat A$ as a topological space, as remarked in the introduction.
\end{remark}

\subsection{Adjunctions}
\label{full_adj_section}

We shall start with the standard definition from category theory.

\begin{definition}\label{adj_definition}
Let $A$ and $B$ be $C^*$-algebras, and let $E$ and $F$ be correspondences from $B$ to $A$ and from $A$ to $B$, respectively, determining tensor product functors  
 \[
 \H_A\stackrel{\otimes _A F}\longrightarrow  \H_B \quad \text{and} \quad   \H_B\stackrel{\otimes _B E}\longrightarrow \H_A .
 \] 
 An \emph{adjunction} between $ F$ and $ E$ is a natural isomorphism
\begin{equation}
\label{eq-adjunction}
 \Phi_{X,Y}:\Bounded_B( X\otimes _A F , Y)\stackrel \cong \longrightarrow  \Bounded_A(X,  Y\otimes _B E),
 \end{equation}
or in other words  a natural equivalence between the left and right sides in (\ref{eq-adjunction}), considered as functors from the product category $\H_A^{\mathrm{op}} \times \H_B$ to the category of sets. 
  \end{definition}

%\begin{remark}
% Of course, in view of Theorem~\ref{Blecher_theorem} the definition can equally well be formulated for strongly continuous  $*$-functors.
 %\end{remark}
 
But unlike the ordinary situation in category theory, there is no real distinction between left adjoints  and right adjoints in the context of Hilbert modules: given an adjunction $\Phi_{X,Y}$ as above, the formula 
\[
\Phi_{X,Y}^*:T\mapsto (\Phi_{X,Y})^{-1}(T^*)^*
\]
defines  an adjunction
\begin{equation}
\label{eq-psi-from-phi}
\Phi^*_{X,Y} \colon 
\Bounded_A(  Y\otimes _B E, X) \stackrel \cong \longrightarrow  \Bounded_B( Y,X\otimes _A F  )
\end{equation}
that reverses the role of $E$ and $F$ in Definition~\ref{adj_definition}.

\begin{definition}
\label{def-unit-counit}
Let $\Phi$ be an adjunction, as in Definition~\ref{adj_definition}.
 \begin{enumerate}[\rm (a)]
   \item A \emph{unit} for $\Phi$ is a bounded, adjointable $A$-bimodule map 
  \[
 \eta\colon  A \longrightarrow F\otimes _B E
  \]
that defines $\Phi_{X,Y}$ by means of the commuting diagram 
\[
\xymatrix@C=40pt{
 X\otimes _A A \ar[d]_{\cong} \ar[r]^-{\mathrm{id}_X \otimes \eta}  &  X\otimes _A F \otimes _B E \ar[d]^{ T \otimes \mathrm{id}_E} \\
 X \ar[r]_-{\Phi_{X,Y}(T)}  & Y\otimes _B E .
 }
\]
 
  \item A \emph{counit} for  $\Phi$  is a bounded, adjointable $B$-bimodule map  
  \[
 \varepsilon \colon E\otimes _A F  \longrightarrow B
  \]
that defines the inverse of the isomorphism $ \Phi_{X,Y}$ in (\ref{eq-adjunction}) by means of the commuting diagram 
\[
\xymatrix@C=40pt{
 Y\otimes _B B\ar[d]_{\cong}   &  Y\otimes _B E \otimes _A F 
 \ar[l]_-{\mathrm{id}_Y \otimes \varepsilon}
\\
Y  & X\otimes _B F  \ar[l]^-{\Phi_{X,Y}^{-1}(T)}  \ar[u]_{ T \otimes \mathrm{id}_F} .
 }
\]

  \end{enumerate}
\end{definition}

\begin{proposition}\label{adj_unit_counit_proposition}
 Every adjunction admits a unique unit and counit.
\end{proposition}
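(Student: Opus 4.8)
The plan is to adapt the standard construction of the unit and counit of an adjunction to the present $C^*$-setting, treating the algebras $A$ and $B$ themselves (viewed as Hilbert modules over themselves) as the ``unit objects'' in place of a terminal/initial evaluation. Using the canonical isomorphism $A\otimes_A F\cong F$, I propose to \emph{define} the unit by applying $\Phi$ to an identity morphism,
\[
\eta \coloneq \Phi_{A,F}(\mathrm{id}_F)\in\Bounded_A(A,F\otimes_B E),
\]
and, using $B\otimes_B E\cong E$, to define the counit by applying the inverse isomorphism to an identity,
\[
\varepsilon\coloneq \Phi^{-1}_{E,B}(\mathrm{id}_E)\in\Bounded_B(E\otimes_A F,B).
\]
Uniqueness is then immediate: if $\eta$ is any unit in the sense of Definition~\ref{def-unit-counit}, then specialising its defining diagram to $X=A$, $Y\cong A\otimes_A F$ and $T=\mathrm{id}_F$, and collapsing the canonical isomorphisms, forces $\eta=\Phi_{A,F}(\mathrm{id}_F)$; likewise any counit must equal $\Phi^{-1}_{E,B}(\mathrm{id}_E)$. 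The real content of the proposition is therefore \emph{existence}, namely that these two specific operators make the diagrams of Definition~\ref{def-unit-counit} commute for \emph{all} $X$, $Y$ and $T$.

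I would establish the unit diagram in two stages, using that $\Phi$ is natural in each of its two variables (it is a natural isomorphism of functors on $\H_A^{\mathrm{op}}\times\H_B$). First, for $X=A$: naturality in $Y$, applied to a morphism $\psi\colon F\to Y$ and to $\mathrm{id}_F$, gives the identity $\Phi_{A,Y}(\psi)=(\psi\otimes\mathrm{id}_E)\circ\eta$, which is precisely the unit diagram in the case $X=A$. Second, to pass to a general Hilbert $A$-module $X$, I would test the two sides of the desired identity against the operators $\widehat x\colon A\to X$, $a\mapsto xa$, of Lemma~\ref{lem-compacts}. Naturality in $X$ applied to $\widehat x$ reduces the composite $\Phi_{X,Y}(T)\circ\widehat x$ to the already-settled case $X=A$ (under $A\otimes_A F\cong F$ the operator $\widehat x\otimes\mathrm{id}_F$ becomes $f\mapsto x\otimes f$), and a short comparison shows it agrees with the corresponding composite built from $\eta$. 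Since $XA$ is dense in $X$ (via an approximate identity of $A$) and both sides are bounded, agreement after precomposition with every $\widehat x$ forces agreement on all of $X$. Finally, to see that $\eta$ is an $A$-\emph{bimodule} map I would use naturality in $X$ with respect to the left-multiplication operators $L_a\in\Bounded_A(A)$: under $A\otimes_A F\cong F$ the operator $L_a\otimes\mathrm{id}_F$ is the left action of $a$ on $F$, and chasing this through the $X=A$ formula yields $\eta(ac)=a\cdot\eta(c)$, i.e.\ left $A$-linearity (right $A$-linearity being automatic from adjointability).

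The counit is handled symmetrically. The cleanest route is to apply the unit construction just described to the adjunction $\Phi^*$ of \eqref{eq-psi-from-phi}, which reverses the roles of $E$ and $F$: this produces a $B$-bimodule map $\eta'\colon B\to E\otimes_A F$, and I would then verify that its adjoint $(\eta')^*\colon E\otimes_A F\to B$ coincides with $\varepsilon$ and satisfies the counit diagram of Definition~\ref{def-unit-counit}. The verification is a diagram chase translating the unit diagram for $\Phi^*$ into the counit diagram for $\Phi$ through the defining relation $\Phi^*_{X,Y}(T)=\Phi_{X,Y}^{-1}(T^*)^*$, together with the fact that the adjoint of an adjointable bimodule map is again a bimodule map. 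Uniqueness of the counit then transfers from that of $\eta'$ through the injectivity of the adjoint operation.

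I expect the main obstacle to be the non-unitality of the $C^*$-algebras. One cannot simply set $\eta(a)=a\cdot\eta_0$ for a distinguished ``value at the identity,'' and this is exactly why the argument must be organised around $A$ (rather than a one-dimensional object) as the test module, with approximate identities, the density of $XA$ in $X$, and the operators $\widehat x$ of Lemma~\ref{lem-compacts} playing the role that evaluation at $1$ would play in the unital case. The remaining difficulty is bookkeeping: keeping the canonical isomorphisms $A\otimes_A F\cong F$, $(X\otimes_A F)\otimes_B E\cong X\otimes_A(F\otimes_B E)$, and their $B$-sided analogues consistent throughout the two diagram chases.
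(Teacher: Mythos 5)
Your proposal is correct and follows essentially the same route as the paper, whose proof simply defines the unit as $\Phi_{A,F}(\mathrm{id}_F)$ and defers the rest to the standard category-theoretic argument in MacLane. You have merely filled in the details the paper omits (naturality in each variable, passage from $X=A$ to general $X$ via the operators of Lemma~\ref{lem-compacts}, the bimodule property, and the symmetric treatment of the counit via $\Phi^*$), all of which are carried out correctly.
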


\begin{proof}
This is standard.  For instance the unit is the image of the identity operator on $F$  under the map 
\[
 \Phi_{A,F}:\Bounded_B(   F , F)\stackrel \cong \longrightarrow  \Bounded_A(A,  F\otimes _B E),
\]
See \cite[IV.1]{MacLane}, for example. \end{proof}

No continuity conditions are imposed on the isomorphisms $\Phi_{X,Y}$ in an adjunction. But in fact continuity is automatic, as the following calculation shows.  

\begin{lemma}\label{adj_cb_lemma}
Let $\Phi$ be an adjunction, as in Definition~\ref{adj_definition}.  Each map 
\[
 \Phi_{X,Y}:\Bounded_B( X\otimes _A F , Y) \stackrel \cong \longrightarrow  \Bounded_A(X,  Y\otimes _B E),
\]
 is a linear, topological  isomorphism.
\end{lemma}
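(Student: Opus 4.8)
The plan is to express both $\Phi_{X,Y}$ and its inverse explicitly through the unit and counit, whose existence is guaranteed by Proposition~\ref{adj_unit_counit_proposition}, and then read off norm bounds. Write $\eta \colon A \to F \otimes_B E$ and $\varepsilon \colon E \otimes_A F \to B$ for the unit and counit of $\Phi$; by Definition~\ref{def-unit-counit} these are bounded adjointable bimodule maps. The commuting square defining the unit in Definition~\ref{def-unit-counit}(a) says that, for $T \in \Bounded_B(X \otimes_A F, Y)$,
\[
\Phi_{X,Y}(T) = (T \otimes \mathrm{id}_E) \circ (\mathrm{id}_X \otimes \eta) \circ \kappa_X,
\]
where $\kappa_X \colon X \xrightarrow{\cong} X \otimes_A A$ is the canonical isometric isomorphism and we identify $X \otimes_A F \otimes_B E$ with $(X \otimes_A F) \otimes_B E$ by associativity of the internal tensor product. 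Dually, Definition~\ref{def-unit-counit}(b) gives, for $S \in \Bounded_A(X, Y \otimes_B E)$,
\[
\Phi_{X,Y}^{-1}(S) = \lambda_Y \circ (\mathrm{id}_Y \otimes \varepsilon) \circ (S \otimes \mathrm{id}_F),
\]
with $\lambda_Y \colon Y \otimes_B B \xrightarrow{\cong} Y$ the canonical isomorphism.

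Two observations then finish the argument. First, linearity of $\Phi_{X,Y}$ is immediate from the first formula, since $T \mapsto T \otimes \mathrm{id}_E$ is $\mathbb{C}$-linear and the remaining maps are fixed. Second, for the topological statement I would invoke the standard fact (\cite[Chapter 4]{Lance}) that forming the internal tensor product of an adjointable operator with an identity is norm non-increasing, in either variable: $\|T \otimes \mathrm{id}_E\| \le \|T\|$ and $\|\mathrm{id}_X \otimes \eta\| \le \|\eta\|$, while $\kappa_X$ and $\lambda_Y$ are isometric. Reading off the two displayed composites then yields $\|\Phi_{X,Y}(T)\| \le \|\eta\|\,\|T\|$ and $\|\Phi_{X,Y}^{-1}(S)\| \le \|\varepsilon\|\,\|S\|$, so $\Phi_{X,Y}$ is a bounded linear bijection with bounded inverse, that is, a topological isomorphism.

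I do not anticipate a genuine obstacle: the whole content is that the abstract, a priori merely set-theoretic, natural bijection $\Phi$ is automatically topological once it is known to be implemented by the bounded unit and counit. The one substantive ingredient is the norm bound for tensoring with an identity, and this is exactly where the $C^*$-structure enters: from $T^*T \le \|T\|^2$ one gets $(T \otimes \mathrm{id}_E)^*(T \otimes \mathrm{id}_E) = (T^*T) \otimes \mathrm{id}_E \le \|T\|^2$ because amplification by $\mathrm{id}_E$ is a positive map, and symmetrically in the other variable. The only bookkeeping to verify is that $\eta$, being a left-$A$-linear adjointable map of $A$-$A$-correspondences, really does admit the amplification $\mathrm{id}_X \otimes \eta$ with the stated bound, which it does.
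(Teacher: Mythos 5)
Your proof is correct and follows exactly the paper's argument: the paper likewise expresses $\Phi_{X,Y}$ via the unit and $\Phi_{X,Y}^{-1}$ via the counit, and bounds their norms by $\|\eta\|$ and $\|\varepsilon\|$ respectively. You have simply written out in full the details (the explicit composites and the contractivity of tensoring with an identity) that the paper declares ``clear from the definition of unit.''
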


\begin{proof}[Proof of the Lemma]
It is clear from the definition of unit that $\Phi_{X,Y}$ is linear, with norm bounded by the operator norm of the unit map from $A$ into 
$F\otimes_B E$.  The inverse is likewise linear, with norm bounded by the norm of the counit.
\end{proof}

The following definition very slightly elaborates on Definition~\ref{conjugate_definition1}.

\begin{definition}
\label{conjugate_definition2}
Let $A$ and $B$ be $C^*$-algebras and let $F$ be a {correspondence from $A$ to $B$}.   The \emph{conjugate} of $F$ is the Hilbert module conjugate  $F^*$, as in Definition~\ref{conjugate_definition1}, equipped with right $A$-module structure defined by 
\[
 f^* \cdot a = (a^*f )^*.
\]
 \end{definition}

The conjugate $F^*$ of a correspondence from $A$ to $B$ is a $B$-$A$-bimodule, but, as it stands,  is not a correspondence from $B$ to $A$: there is no obvious $A$-valued inner product. Kajiwara, Pinzari and Watatani \cite{KPW} relate the existence of such an inner product on $F^*$   to the existence of adjoint functors, as follows.

\begin{theorem}\textup{\cite[Theorem 4.4(2), Theorem 4.13]{KPW}}
\label{adj_bimod_theorem}
 A  tensor product functor   
from $
   \H_A$ to $ \H_B
$,
 induced from a correspondence $F$,    has an adjoint tensor product functor if and only if all the following conditions are met:
 \begin{enumerate}[\rm (a)]
 
 \item The conjugate  $B$-$A$-bimodule  $ {F}^*$ carries an $A$-valued inner product making  it  into a correspondence from $B$ to $A$.
 
 \item  The conjugate operator space structure  on $F^*$ is completely boundedly equivalent to the Hilbert $A$-module operator space structure on $F^*$.
 
  \item The left action of $A$ on $F$ is through a $*$-homomorphism from $A$ into $\Compact_B(F,F)$.
  
 \item The left action of $B$ on $F^*$ is through a $*$-homomorphism from $B$ into $\Compact_A (F^*,F^*)$. 
 
\end{enumerate}
 When these conditions are met, the adjoint functor from $\H_B$ to $\H_A$ is given by tensor product with the correspondence $F^*$.\hfill\qed
\end{theorem}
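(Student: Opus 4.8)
The plan is to take the candidate adjoint to be tensor product with the conjugate correspondence $F^*$, and to realize the adjunction through an explicit unit and counit, arranged so that the four conditions appear precisely as the requirements for these two maps to be well-defined, bounded, adjointable bimodule maps satisfying the triangle identities. Throughout I would lean on the two identifications already set up in the excerpt: the isomorphism $F\otimes_B F^*\cong\Compact_B(F,F)$ obtained by completing the map (\ref{eq-tp-cpts}), and the isomorphism $F^*\cong\Compact_B(F,B)$ of Lemma~\ref{lem-compacts2}.

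For the sufficiency direction, assume (a)--(d). By (a) the conjugate $F^*$ is a correspondence from $B$ to $A$, so $\otimes_B F^*$ is a functor $\H_B\to\H_A$. I would define the counit $\varepsilon\colon F^*\otimes_A F\to B$ by $f^*\otimes g\mapsto\langle f,g\rangle_F$; using Definition~\ref{conjugate_definition2} one checks directly that this descends to the tensor product over $A$ and is a bounded adjointable $B$-bimodule map. For the unit I would invoke condition (c), which says the left action $A\to\Bounded_B(F,F)$ lands in $\Compact_B(F,F)$; transporting it through $\Compact_B(F,F)\cong F\otimes_B F^*$ yields a map $\eta\colon A\to F\otimes_B F^*$, an $A$-bimodule map because the action is a homomorphism. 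The heart of the matter is then the two triangle identities of Definition~\ref{def-unit-counit}, i.e. that the composite $F\cong A\otimes_A F\xrightarrow{\eta\otimes\id}F\otimes_B F^*\otimes_A F\xrightarrow{\id\otimes\varepsilon}F\otimes_B B\cong F$ is the identity, and symmetrically on $F^*$. The first unwinds to the statement that a compact operator is recovered from its expansion in the rank-one operators $y\otimes x^*$ together with the inner product of $F$, which is exactly the content of $\Compact_B(F,F)\cong F\otimes_B F^*$; the symmetric identity is handled the same way using (a) together with condition (d), which guarantees that $B$ acts on $F^*$ through $\Compact_A(F^*,F^*)\cong F^*\otimes_A F$. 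Once both identities hold, the diagrams in Definition~\ref{def-unit-counit} produce the natural isomorphism of Definition~\ref{adj_definition}.

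For the necessity direction, suppose an adjoint $\otimes_B E$ exists. By Proposition~\ref{adj_unit_counit_proposition} I would extract the unit $\eta\colon A\to F\otimes_B E$ and counit $\varepsilon\colon E\otimes_A F\to B$. Pairing $e\in E$ against $F$ by $f\mapsto\varepsilon(e\otimes f)$ gives an element of $\Bounded_B(F,B)$; the triangle identities should force this element to be compact, hence to lie in $\Compact_B(F,B)\cong F^*$, producing a $B$-$A$-bimodule map $E\to F^*$ whose inverse is supplied by the unit, so that $E\cong F^*$. Transporting the $A$-valued inner product of $E$ across this isomorphism gives (a); the unit $A\to F\otimes_B E\cong\Compact_B(F,F)$ gives (c); applying the same reasoning to the dual adjunction $\Phi^*$ of (\ref{eq-psi-from-phi}), which interchanges the roles of $E$ and $F$, gives (d); and (b) records the fact that $\eta$ and $\varepsilon$ remain bounded for both operator space structures carried by $F^*$.

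I expect the main obstacle to be the operator space condition (b). In the purely algebraic setting no analogue of (b) is needed, and the reason it surfaces here is that the conjugate $F^*$ carries two a priori different operator space structures---the one from Lemma~\ref{lem-compacts2} realizing it inside $\Compact_B(F,B)$, and the Hilbert-module structure coming from the $A$-valued inner product supplied by (a)---and the adjunction isomorphism stays bounded (indeed completely bounded) only when these agree up to cb-equivalence. Pinning down that (b) is exactly what makes the unit and counit bounded, and conversely that an existing adjunction forces (b), is the delicate point; by contrast the bimodule and triangle-identity bookkeeping, though lengthy, is essentially formal once the identifications $\Compact_B(F,F)\cong F\otimes_B F^*$ and $F^*\cong\Compact_B(F,B)$ are in hand.
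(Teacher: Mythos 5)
Your overall architecture (unit from (c), counit from the inner-product pairing, triangle identities, and the reverse extraction from a given adjunction) has the right shape, and is close to how the paper assembles the theorem from its local-adjunction machinery (Theorem~\ref{ladj_bimod_theorem}, Propositions~\ref{unit_proposition} and \ref{counit_proposition}, Corollary~\ref{adj_iff_ladj_corollary}). But there is a genuine gap at the step you dismiss as a direct check: the claim that $\varepsilon\colon f^*\otimes g\mapsto\langle f,g\rangle_F$ ``descends to the tensor product over $A$ and is a bounded adjointable $B$-bimodule map'' using only condition (a) and Definition~\ref{conjugate_definition2}. Neither boundedness nor adjointability comes for free, and these two analytic points are exactly what conditions (b) and (d) are for. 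Boundedness: the pairing is completely contractive on the Haagerup tensor product $F^*\otimes_{hA}F$ taken with the \emph{conjugate} operator space structure on $F^*$ (Lemma~\ref{ladj_ip_lemma0}), but the counit must be defined on the Hilbert-module tensor product $F^*\otimes_A F$, whose completion is governed by the Hilbert $A$-module structure coming from (a); bridging the two structures is precisely condition (b) (this is the content of Lemma~\ref{ladj_ip_lemma}). Concretely, for an infinite-dimensional Hilbert space $H$ viewed as a correspondence from $\C$ to $\C$, condition (a) holds, yet the pairing $\overline{H}\otimes_{\C}H\to\C$ is unbounded in the Hilbert-module tensor product norm. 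Adjointability: even when bounded, $\varepsilon$ is adjointable if and only if (d) holds (Proposition~\ref{counit_proposition}); the paper's example of a non-complemented ideal $B\subseteq A$ with $F=B$ gives a bounded inner-product pairing that is not adjointable. Correspondingly, your use of (d) is misplaced: you spend it on the second triangle identity, but once $\eta$ and $\varepsilon$ are known to be well-defined adjointable bimodule maps, both triangle identities are formal rank-one computations (the second uses only that $A\to\Compact_B(F,F)$ is a $*$-homomorphism, so that $\eta(a)^*=\eta(a^*)$). What (d) actually buys is the adjointability of $\varepsilon$ itself.

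There is a second gap in the necessity direction. That the map $e\mapsto(f\mapsto\varepsilon(e\otimes f))$ lands in the \emph{compact} operators $\Compact_B(F,B)$ is not forced by the triangle identities; the paper proves it (Lemma~\ref{adj_sc_lemma}) from the \emph{naturality} of $\Phi$, writing a dense set of compacts in the form $y\otimes(Kx\otimes f)^*$ and pulling the compact factor $K^*$ through the adjunction. Likewise, condition (b) is not a bookkeeping remark in this direction: one must show that the bimodule isomorphism $E\cong F^*$ is completely bounded with completely bounded inverse, which the paper obtains from Theorem~\ref{ladj_cb_theorem} (the matrix norms of $\Phi_{X,Y}$ are controlled by $\|\Phi_{X^\infty,Y^\infty}\|$, again by naturality). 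Both points are fixable with the paper's tools, but as written your proposal asserts precisely the statements that carry the analytic weight of the theorem.
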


Condition (b) will be explained in the next section, where we  shall also give  a proof of   Theorem~\ref{adj_bimod_theorem} in the course of our study of the weaker notion of local adjunction.  The full relationship of our results to those of \cite{KPW} will be  discussed in detail in Section \ref{sec-indexes}.

\begin{example}
\label{unital_example}
Consider the case of unital $C^*$-algebras $A$ and $B$. If $F$ is a correspondence satisfying condition (a) of Theorem \ref{adj_bimod_theorem}, then the conditions (c) and (d) are equivalent to requiring that $F$ be finitely generated (and hence projective, cf. \cite[Theorem 8.1.27]{BlM}) as a module over $B$ and $A$, respectively, while the analytic condition (b) follows automatically from (c) and (d): see \cite[Example 2.31]{KPW}. (We note that in the non-unital setting, the conditions (b), (c) and (d) are independent of one another: see the examples in Sections \ref{Hilbert_sum_example} and \ref{nonunital_noncounital_example}.) Combined with a theorem of Morita on adjunctions in the algebraic setting \cite[Theorem 4.1]{Morita}, we find that the following are equivalent for unital $C^*$-algebras $A$ and $B$:
 \begin{enumerate}[\rm (a)]
  \item The tensor product functor $\otimes_A F:\H_A\to \H_B$ has an adjoint.
  \item The algebraic tensor product functor $\otimes_A^{\alg} F:\Mod_A\to \Mod_B$ has a two-sided adjoint.
\end{enumerate}
This situation is reminiscent of Beer's result, that two unital $C^*$-algebras are Morita equivalent as $C^*$-algebras if and only they are Morita equivalent as rings \cite[Theorem 1.8]{Beer}. 
\end{example}

\subsection{Hilbert Modules as Operator Spaces}\label{operator_section}

Recall that an \emph{operator space structure} on a vector space $X$ is a sequence of Banach space norms on the spaces $M_n(X)$ of $n\times n$ matrices with entries from $X$ such that 
\begin{enumerate}[\rm (a)]
\item the norm of a block diagonal matrix is the largest of the norms of its diagonal blocks; and 
\item the norm of a three-fold product $ABC$, where $A$ and $C$ are scalar $n\times n$ matrices, and $B$ is an $n\times n$  matrix with entries from $X$, is no more than the product of the  norms of the matrices $A$, $B$ and $C$ (we use the operator norm for the scalar matrices).
\end{enumerate}
See \cite{MR1793753}  or \cite{BlM}.  An \emph{operator space} is of course a vector space with an operator space structure.  

\begin{example}\label{C*_operator_example}
The above conditions hold when $X$ is a closed subspace of a $C^*$-algebra $B$ and the norms on $M_n (X)$ are inherited from the $C^*$-algebra norm on $M_n(B)$.  
\end{example}

A linear map $T:X\to Y$ between  operator spaces is \emph{completely bounded} if 
\[
\|T\|_{\cb}= \sup_n \|M_n(T)\|<\infty,\]
 where $M_n(T):M_n(X)\to M_n(Y)$ is defined by applying $T$ entrywise.  An \emph{isomorphism of operator spaces} will mean, for us, a linear isomorphism $T:X\to Y$ such that both $T$ and $T^{-1}$ are completely bounded.  We shall use the term \emph{complete isometric isomorphism}  when 
 \[
 \|T\|_{\cb} = \|T^{-1}\|_{\cb} =1.
 \]
\begin{example}
Every operator space is completely isometrically isomorphic to a closed subspace of a $C^*$-algebra. See for example \cite[Chapter 2]{MR1793753} for an exposition.
\end{example}

Suppose now that $X$ is a right Hilbert $A$-module.  Then $M_n(X)$ is a Hilbert $M_n(A)$-module in a natural way, namely the right action of $M_n(A)$ on $M_n(X)$ is given by matrix multiplication, and the $M_n(A)$-valued inner product is  
\[
\langle S, T\rangle _{i,j} = \sum _{k=1}^n \langle S_{kj}, T_{kj}\rangle .
\]
This observation  gives  $X$ an  {operator space} structure.  See \cite[8.2.1]{BlM}.

\begin{example}
If $A$ is a $C^*$-algebra, then the operator space norms on $M_n(A)$ induced by the inner product $\langle a,b\rangle = a^*b$ on $A$ are the canonical $C^*$-algebra norms.
\end{example}

\begin{example}\label{operator_corner_example}
If $X$ is a closed subspace of $\Bounded_A(Y,Z)$, where $Y$ and $Z$ are Hilbert $A$-modules, then the operator space structure on $X$ associated to the $\Bounded_A(Y)$-valued inner product $\langle S, T\rangle = S^*T$ coincides with the one given by the natural embedding 
of $\Bounded_A(Y,Z)$ as a corner of the $C^*$-algebra $\Bounded_A(Y\oplus Z)$. 

For instance, if $X$ is a Hilbert $A$-module then the isomorphism $X\cong \Compact_A(A,X)$ of Lemma \ref{lem-compacts} is completely isometric for the operator space structure induced on $X$ by the $A$-valued inner product, and the operator space structure on $\Compact_A(A,X)$ induced by its embedding into $\Compact_A(A\oplus X)$.
\end{example}

\begin{example}
If $H$ is a Hilbert space, then the operator space structure associated to the inner product on $H$ is the \emph{column Hilbert space} structure \cite[1.2.23]{BlM} coming from the identification $H\cong \Bounded(\C,H)$.
\end{example}

The assignment of an operator space structure to each Hilbert module, as above, gives a functorial embedding of the category of Hilbert modules and adjointable maps into the category of operator spaces and completely bounded maps:

\begin{theorem}\label{Paschke_theorem}\textup{\cite[Proposition 8.2.2]{BlM}} 
If $X$ and $Y$ are right Hilbert $A$-modules, then every bounded $A$-linear map $X\to Y$ is completely bounded. 
\end{theorem}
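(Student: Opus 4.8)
The plan is to prove the stronger quantitative statement that $\|T\|_{\cb}=\|T\|$ for every bounded $A$-linear map $T\colon X\to Y$. Since $\|T\|\le\|M_n(T)\|$ for every $n$, it suffices to establish $\|M_n(T)\|\le\|T\|$, and unwinding the definition of the $M_n(A)$-valued inner product on $M_n(Y)$ this is the estimate
\[
\bigl\|\langle M_n(T)S,\,M_n(T)S\rangle\bigr\|_{M_n(A)}\le\|T\|^2\,\bigl\|\langle S,S\rangle\bigr\|_{M_n(A)}\qquad(S\in M_n(X)).
\]
The real content is an \emph{operator-valued} inequality in $A$, not a mere inequality of norms, and isolating the right operator inequality is the crux of the argument.

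The key step, which I expect to be the main obstacle, is the pointwise inequality
\[
\langle Tx,Tx\rangle\le\|T\|^2\,\langle x,x\rangle\qquad\text{in }A,\ x\in X.
\]
This is genuinely stronger than the obvious norm bound $\|\langle Tx,Tx\rangle\|\le\|T\|^2\|\langle x,x\rangle\|$, and since $T$ need not be adjointable one cannot simply invoke positivity of $\|T\|^2-T^*T$. Instead I would normalize. Working in the unitization $\tilde A$, put $c=\langle x,x\rangle$ and, for $\epsilon>0$, set $x_\epsilon=x(c+\epsilon)^{-1/2}$; this lies in $X$ because $(c+\epsilon)^{-1/2}=\alpha 1+a$ with $\alpha\in\C$, $a\in A$, and $T$ is $\tilde A$-linear by $\C$- and $A$-linearity. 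Then $\langle x_\epsilon,x_\epsilon\rangle=(c+\epsilon)^{-1/2}c(c+\epsilon)^{-1/2}\le 1$, so $\|x_\epsilon\|\le1$, and hence the positive element $\langle Tx_\epsilon,Tx_\epsilon\rangle$ has norm at most $\|T\|^2$, giving $\langle Tx_\epsilon,Tx_\epsilon\rangle\le\|T\|^2 1$. Since $\langle Tx_\epsilon,Tx_\epsilon\rangle=(c+\epsilon)^{-1/2}\langle Tx,Tx\rangle(c+\epsilon)^{-1/2}$, conjugating this inequality by $(c+\epsilon)^{1/2}$ yields $\langle Tx,Tx\rangle\le\|T\|^2(c+\epsilon)$, and letting $\epsilon\to 0$ (the positive cone being norm-closed) delivers the claim.

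With the pointwise inequality in hand the remainder is bookkeeping. Applying it to each coordinate of $v=(v_1,\dots,v_n)\in X^n$ and summing the resulting inequalities in $A$ gives $\langle T^{(n)}v,T^{(n)}v\rangle\le\|T\|^2\langle v,v\rangle$, where $T^{(n)}\colon X^n\to Y^n$ is the coordinatewise amplification; taking norms shows $\|T^{(n)}\|\le\|T\|$. Finally I would identify $M_n(X)$ with operators into $X^n$: each $S\in M_n(X)$ determines an adjointable map $\hat S\colon A^n\to X^n$, $(\hat Sa)_k=\sum_i S_{ki}a_i$, and a direct computation gives $\hat S^*\hat S=\langle S,S\rangle$, so that $\|\hat S\|=\|S\|_{M_n(X)}$ (this is the $A^n$-level analogue of Lemma~\ref{lem-compacts}). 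Under this identification $M_n(T)S$ corresponds to $T^{(n)}\circ\hat S$, whence by submultiplicativity of the operator norm
\[
\|M_n(T)S\|=\|T^{(n)}\hat S\|\le\|T^{(n)}\|\,\|\hat S\|\le\|T\|\,\|S\|_{M_n(X)}.
\]
Thus $\|M_n(T)\|\le\|T\|$ for every $n$, so $T$ is completely bounded with $\|T\|_{\cb}=\|T\|$.
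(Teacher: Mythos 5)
Your proposal is correct, and it supplies a complete proof of a statement that the paper itself does not prove: Theorem~\ref{Paschke_theorem} is quoted verbatim from \cite[Proposition 8.2.2]{BlM} with no argument given, so there is no in-paper proof to compare against. Your route is the standard one behind that citation, going back to Paschke: first the operator inequality $\langle Tx,Tx\rangle\le\|T\|^2\langle x,x\rangle$, then amplification. Each step checks out. The normalization trick is sound: $(c+\epsilon)^{-1/2}$ lies in the unitization $\tilde A$ and differs from a scalar by an element of $A$, so $x_\epsilon\in X$; $T$ is automatically $\tilde A$-linear; $\langle x_\epsilon,x_\epsilon\rangle = (c+\epsilon)^{-1/2}c\,(c+\epsilon)^{-1/2}\le 1$ by functional calculus; and conjugation by $(c+\epsilon)^{1/2}$ preserves the order, after which norm-closedness of the positive cone handles the limit $\epsilon\to 0$. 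The bookkeeping is also right: $\hat S$ is adjointable with $(\hat S^*x)_i=\sum_k\langle S_{ki},x_k\rangle$, whence $\hat S^*\hat S=\langle S,S\rangle$ and $\|\hat S\|=\|S\|_{M_n(X)}$, and $\widehat{M_n(T)S}=T^{(n)}\circ\hat S$ by $A$-linearity of $T$. One small remark: your identity $\hat S^*\hat S=\langle S,S\rangle$ uses the inner product $\langle S,T\rangle_{i,j}=\sum_k\langle S_{ki},T_{kj}\rangle$, which is the intended one; the paper's displayed formula $\langle S,T\rangle_{i,j}=\sum_k\langle S_{kj},T_{kj}\rangle$ has an evident typo (no dependence on $i$), and your computation implicitly corrects it. You also get the sharper conclusion $\|T\|_{\cb}=\|T\|$, which is exactly the form of the statement in \cite{BlM}.
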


We are going to use  operator spaces  to treat Hilbert $A$-modules $X$ and their conjugate modules $X^*$ introduced in Definition~\ref{conjugate_definition1}  on a somewhat equal footing.  To this end, we recall the following concept:

\begin{definition}
\label{def-operator-space-conj}
If $X$ is any operator space, then its \emph{conjugate} $X^*$ is the complex conjugate vector space, equipped with the norms 
\[
\| [x_{ij} ] \|_{M_n (X^*)} = \| [ x_{ji} ]\|_{M_n(X)},
\]
which endow it with the structure of an operator space.
\end{definition}

\begin{remark} The operator space  $X^*$ is usually called the \emph{adjoint} of $X$, but once again we shall try to avoid over-using this word in this paper. However we warn the reader that the term \emph{conjugate} as it is used in \cite{KPW} refers to adjoint functors. 
\end{remark}

\begin{example}
In the case of a  Hilbert $A$-module $X$, the operator space structure on $X^*$ provided by Definition~\ref{def-operator-space-conj} is the operator space structure we would obtain by viewing $X^*$ as a \emph{left} Hilbert $A$-module (a concept that we are avoiding in this paper).  See \cite[1.2.25 and 8.2.3(2)]{BlM}. 
\end{example}

\begin{example}
\label{Hilbert_cb_example}
Let $H$ be a complex Hilbert space, equipped with its column operator space structure. The conjugate operator space $H^*$ is, as a vector space, the same as the complex conjugate Hilbert space $\overline{H}$, and the conjugate operator space structure on $H^*$ is the same as the \emph{row Hilbert space} structure \cite[1.2.23]{BlM} on $\overline{H}$. Being a Hilbert space in its own right, $\overline{H}$ also carries a column Hilbert space structure. The identity map $I:\overline{H}\to \overline{H}$, considered as a map from the row operator space to the column operator space, has $\| I\|_{\cb}^2 = \dim H$. In particular, this map is \emph{not} completely bounded if $H$ is infinite-dimensional. See \cite[p.56]{MR1793753}.
\end{example}

\begin{example}
\label{compact_ci_example}
Let $X$ be a Hilbert $A$-module, and view $X^*$ as a conjugate operator space as above. The isomorphism $X^*\cong \Compact_A(X,A)$ of Lemma \ref{lem-compacts2} is completely isometric, where $\Compact_A(X,A)$ is viewed as a subspace of $\Compact_A(X\oplus A)$ as in Example \ref{operator_corner_example}.
\end{example}

\subsection{Operator Modules}
\label{sec-operator-modules}

If a $C^*$-algebra  $A$ (or more generally a Banach algebra of operators on a Hilbert space) acts on a Banach space $X$, then there are natural induced actions 
\[
M_n(A) \times M_n (X) \longrightarrow M_n(X)
\]
that combine the given action with matrix multiplication.  When $X$ is an operator space we shall always assume that these actions are completely contractive in the sense that 
\[
 \|a\cdot x \|_{M_n(X)} \le \| a \|_{M_n(A)} \|x\|_{M_n(X)} .
 \]
 The same will go for right actions instead of left actions, and we shall use the term \emph{operator module} to describe this situation (the term \emph{h-module} is used in \cite[Section 3.1.3]{BlM}; this is a reference to the Haagerup tensor product that we shall review below).

\subsection{The Haagerup Tensor Product}\label{haagerup_section}

There are several notions of tensor product for operator spaces. Here we shall need only the \emph{Haagerup tensor product}, which is  defined as follows. Let $A$ be a $C^*$-algebra, let $X$ be a right operator $A$-module, and let $Y$ be a left operator $A$-module. The Haagerup tensor product $X\otimes_{hA} Y$  is the completion of the algebraic tensor product $X\otimes_A^{\alg} Y$ that  is characterized by the following universal property: each bilinear map 
\[
\Phi \colon X \times Y \longrightarrow Z
\]
into an operator space for which 
\begin{enumerate}[\rm (a)]
\item   $\Phi (xa,y) = \Phi (x, ay)$, and 
\item the matrix extensions 
\[
\Phi _n \colon M_n(X) \times M_n (Y) \longrightarrow M_n(Z)
\]
satisfy
\[
 \|\Phi_n (x,y) \|_{M_n(Z)} \le \| x \|_{M_n(X)} \|y\|_{M_n(Y)} .
 \]
 for all $n$
 \end{enumerate}
extends to a complete contraction from $X\otimes _{hA} Y $ to $Z$. See \cite[3.4.2]{BlM}.

 The Haagerup tensor product is associative \cite[Theorem 3.4.10]{BlM} and  functorial in both variables with respect to completely bounded  module maps \cite[Lemma 3.4.5]{BlM}, and the natural isomorphism   on algebraic tensor products  extends to a completely isometric isomorphism
\[
(  X\otimes_{hA} Y)^*\stackrel \cong\longrightarrow  Y^* \otimes_{hA} X^*. 
\]
See \cite[1.5.9]{BlM}. 

The following theorems of Blecher relate the Haagerup tensor product to the tensor product and compact operators on Hilbert modules:

\begin{theorem}\textup{\cite[Theorem 8.2.11]{BlM}}
\label{Haagerup_theorem} 
Let $X$ be a Hilbert $A$-module, and let $F$ be a correspondence from $A$ to $B$. The identity map on $X\otimes_A^{\alg} F$ extends to a completely isometric natural isomorphism
\[
X \otimes_{hA} F  \stackrel \cong\longrightarrow  X\otimes_A F 
\]
from the Haagerup tensor product to the internal Hilbert module tensor product. \qed
\end{theorem}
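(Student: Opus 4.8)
The plan is to prove that the canonical map is a complete isometry by showing, on the common dense subspace $X\otimes_A^{\alg}F$, that the Haagerup norm and the internal tensor product norm agree at every matrix level. I would organise this as two opposite inequalities.

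First I would produce a complete contraction $\Theta\colon X\otimes_{hA}F\to X\otimes_A F$ restricting to the identity on $X\otimes_A^{\alg}F$, using the universal property of the Haagerup tensor product recalled in Section~\ref{haagerup_section}. The bilinear map $\mu\colon X\times F\to X\otimes_A F$, $\mu(x,f)=x\otimes f$, is $A$-balanced by the construction of the internal tensor product, so it remains to check the matrix estimate. Writing $x=[x_{ij}]\in M_n(X)$, $f=[f_{ij}]\in M_n(F)$ and letting $x\odot f$ denote the formal matrix product with entries $(x\odot f)_{ij}=\sum_k x_{ik}\otimes f_{kj}$, the matrix version of the inner-product formula in Definition~\ref{def-itp} yields the identity $\langle x\odot f,\,x\odot f\rangle_{M_n(B)}=\langle f,\,\langle x,x\rangle_{M_n(A)}\,f\rangle_{M_n(F)}$, in which $\langle x,x\rangle_{M_n(A)}$ is a positive element of $M_n(A)$. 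Since $\langle g,bg\rangle\le\|b\|\,\langle g,g\rangle$ for every positive $b$ (as $\|b\|-b$ is a square), this gives $\|x\odot f\|^2\le\|\langle x,x\rangle\|\,\|\langle f,f\rangle\|=\|x\|_{M_n(X)}^2\,\|f\|_{M_n(F)}^2$, which is exactly the complete contraction condition. Thus $\Theta$ exists by the universal property and satisfies $\|u\|_{X\otimes_A F}\le\|u\|_h$ for $u\in X\otimes_A^{\alg}F$.

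The substance of the argument is the reverse inequality $\|u\|_h\le\|u\|_{X\otimes_A F}$, which I would obtain by rebalancing a given factorization using a square root of the Gram matrix. Given $u=\sum_{k=1}^n x_k\otimes f_k$, set $\mathbf{x}=(x_1,\dots,x_n)$, $\mathbf{f}=(f_1,\dots,f_n)^{\mathsf T}$ and $G=[\langle x_i,x_j\rangle]\in M_n(A)$, a positive matrix. For $\epsilon>0$ the element $G+\epsilon$ is positive and invertible in $M_n(\widetilde{A})$, so I may form the row $\mathbf{y}=\mathbf{x}\,(G+\epsilon)^{-1/2}$ and the column $\mathbf{g}=(G+\epsilon)^{1/2}\,\mathbf{f}$. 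Because $(G+\epsilon)^{-1/2}(G+\epsilon)^{1/2}=I$, moving the matrix coefficients across the balanced tensor product gives $\mathbf{y}\odot\mathbf{g}=u$ exactly. A direct computation gives the column norm $\|\mathbf{g}\|^2=\|\langle\mathbf{f},(G+\epsilon)\mathbf{f}\rangle\|=\|\langle u,u\rangle+\epsilon\langle\mathbf{f},\mathbf{f}\rangle\|$ and the row norm $\|\mathbf{y}\|^2=\|(G+\epsilon)^{-1/2}G(G+\epsilon)^{-1/2}\|\le 1$, the latter because $0\le G\le G+\epsilon$. Hence $\|u\|_h\le\|\mathbf{y}\|\,\|\mathbf{g}\|\le(\|u\|_{X\otimes_A F}^2+\epsilon\|\mathbf{f}\|^2)^{1/2}$, and letting $\epsilon\to 0$ yields $\|u\|_h\le\|u\|_{X\otimes_A F}$. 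Running the identical rebalancing with rectangular amplifications of $\mathbf{x}$ and $\mathbf{f}$ promotes both inequalities to every matrix level, so $\Theta$ is a complete isometry; density of $X\otimes_A^{\alg}F$ in each tensor product then lets it extend to the desired completely isometric isomorphism, and naturality is immediate from the formula on elementary tensors.

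I expect the reverse inequality to be the main obstacle. The contraction direction is essentially formal once the positivity estimate is in hand, whereas the reverse inequality genuinely requires knowing that the Haagerup norm is an infimum over factorizations and then \emph{finding} the optimal one through the $(G+\epsilon)^{\pm1/2}$ rebalancing. The single point needing care is the non-unitality of $A$ (the case of interest for reduced group $C^*$-algebras): this is precisely why I regularise $G$ by $G+\epsilon$ and pass to the limit, rather than working with $G^{-1/2}$ and its support projection directly. Regularising keeps all coefficients inside $M_n(\widetilde{A})$ and makes the identity $\mathbf{y}\odot\mathbf{g}=u$ hold exactly for each $\epsilon$, so that only the norm bound, and not the factorization itself, is obtained by a limiting argument.
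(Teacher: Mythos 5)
Your proof is correct, but note that the paper itself gives no argument for this statement: Theorem~\ref{Haagerup_theorem} is simply quoted from \cite[Theorem 8.2.11]{BlM}, so the relevant comparison is with the cited literature, and your $(G+\epsilon)^{\pm 1/2}$ rebalancing is essentially the standard (Blecher's) argument that underlies that reference. Both halves check out: the contraction direction follows from the matrix identity $\langle x\odot f, x\odot f\rangle = \langle f, \langle x,x\rangle f\rangle$ together with $\langle f, bf\rangle \le \|b\|\,\langle f,f\rangle$ for positive $b$, and in the reverse direction your factorization $u=\mathbf{y}\odot\mathbf{g}$ is exact for each $\epsilon>0$, with row norm $\|(G+\epsilon)^{-1/2}G(G+\epsilon)^{-1/2}\|^{1/2}\le 1$ and column norm $\|\langle u,u\rangle + \epsilon\langle \mathbf{f},\mathbf{f}\rangle\|^{1/2}$; the same computation with rectangular matrices gives complete isometry, and density finishes the argument. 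Two steps deserve explicit justification rather than silent use. First, moving $(G+\epsilon)^{\pm 1/2}\in M_n(\widetilde{A})$ across the $A$-balanced tensor product requires the (easy) observation that $A$-balancedness automatically implies $\widetilde{A}$-balancedness, since for $\tilde a = a+\lambda 1$ the scalar parts of $x\tilde a\otimes f$ and $x\otimes \tilde a f$ agree and the remaining discrepancy is $xa\otimes f - x\otimes af$; this is precisely what makes your regularisation legitimate for non-unital $A$. Second, your reverse inequality rests on the description of the module Haagerup norm, at each matrix level, as an infimum over factorizations through the balanced algebraic tensor product; this is not literally the universal-property formulation recalled in Section~\ref{haagerup_section}, so it should be cited (see \cite[3.4.2]{BlM} and the surrounding discussion) or derived from the quotient description of $X\otimes_{hA}F$. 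Neither point is a gap --- both are standard --- but without them the argument is not self-contained.
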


\begin{theorem}\textup{\cite[Corollary 8.2.15]{BlM}}
\label{tensor_compact_theorem} 
Let $X$ and $Y$   be   Hilbert $A$-modules. There is a completely  isometric isomorphism of operator spaces
\[
Y \otimes_{hA} X^*  \stackrel \cong\longrightarrow  \Compact_A(X,Y) 
\]
mapping each elementary tensor $y\otimes x^*$ to the corresponding compact operator $y\otimes x^*$  defined in \textup{(\ref{eq-rank-one2})}. \qed 
 \end{theorem}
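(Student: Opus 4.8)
The plan is to deduce the statement from Theorem~\ref{Haagerup_theorem} by promoting $X^*$ to a genuine correspondence and then recognizing the resulting internal tensor product as a space of compact operators. The point is that although $X^*$ need not carry an $A$-valued inner product, it does carry a $\Compact_A(X)$-valued one.

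First I would equip $X^*$ with the structure of a correspondence from $A$ to the $C^*$-algebra $\Compact_A(X)\coloneq\Compact_A(X,X)$. Using the identification $X^*\cong \Compact_A(X,A)$ of Lemma~\ref{lem-compacts2}, give $\Compact_A(X,A)$ the right $\Compact_A(X)$-module structure by composition, $T\cdot R = T\circ R$, together with the $\Compact_A(X)$-valued inner product $\langle S,T\rangle = S^*T$; exactly as in the discussion preceding Lemma~\ref{lem-compacts}, this makes $\Compact_A(X,A)$ a Hilbert $\Compact_A(X)$-module. A short check shows that the left action of $A$ on $X^*$ from Definition~\ref{conjugate_definition1} corresponds, under Lemma~\ref{lem-compacts2}, to $a\cdot T = (z\mapsto aT(z))$, that this action is by adjointable operators (with $(a\cdot T)^*=a^*\cdot T$), and that it is nondegenerate. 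Hence $X^*$ is a correspondence from $A$ to $\Compact_A(X)$, and Theorem~\ref{Haagerup_theorem}, applied with the Hilbert $A$-module $Y$ and this correspondence, yields a completely isometric isomorphism $Y\otimes_{hA} X^*\stackrel{\cong}{\longrightarrow} Y\otimes_A X^*$ onto the internal tensor product, which is a Hilbert $\Compact_A(X)$-module.

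Next I would identify $Y\otimes_A X^*$ with $\Compact_A(X,Y)$ as Hilbert $\Compact_A(X)$-modules. Under Lemma~\ref{lem-compacts} write $\hat y\in\Compact_A(A,Y)$ for the operator $a\mapsto ya$, and define $\Theta\colon Y\otimes_A X^*\to \Compact_A(X,Y)$ on elementary tensors by $\Theta(y\otimes T)=\hat y\circ T$; on the tensor $y\otimes x^*$ this is precisely the rank-one operator $y\otimes x^*$ of~(\ref{eq-rank-one1}). This map is $A$-balanced, and a direct computation shows it preserves the $\Compact_A(X)$-valued inner products: both $\langle y_1\otimes T_1, y_2\otimes T_2\rangle$ and $\langle\Theta(y_1\otimes T_1),\Theta(y_2\otimes T_2)\rangle$ equal $T_1^*\langle y_1,y_2\rangle_Y T_2$, using $\hat y_1^*\hat y_2=\langle y_1,y_2\rangle_Y$. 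Therefore $\Theta$ extends to an isometry of the completion; since its range contains every finite-rank operator it is dense, and so $\Theta$ is a unitary isomorphism of Hilbert $\Compact_A(X)$-modules.

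Finally I would match the operator space structures and assemble the pieces. A unitary isomorphism of Hilbert modules is adjointable, hence completely isometric for the operator space structures induced by the module inner products (Theorem~\ref{Paschke_theorem}); and by Example~\ref{operator_corner_example} the operator space structure on $\Compact_A(X,Y)$ coming from the inner product $\langle S,T\rangle=S^*T$ is exactly the one induced by the embedding of $\Compact_A(X,Y)$ as a corner of $\Compact_A(X\oplus Y)$, which is the structure appearing in the statement. Composing $\Theta$ with the isomorphism of the first step then gives the desired completely isometric isomorphism $Y\otimes_{hA} X^*\to \Compact_A(X,Y)$ carrying $y\otimes x^*$ to the rank-one operator $y\otimes x^*$. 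I expect the only genuinely delicate point to be the bookkeeping of operator space structures --- namely verifying that $X^*$ really is a correspondence (so that Theorem~\ref{Haagerup_theorem} applies) and that the corner structure on $\Compact_A(X,Y)$ agrees with the Hilbert $\Compact_A(X)$-module structure transported by $\Theta$; the inner-product identity itself is routine. An alternative, more hands-on route would bypass Theorem~\ref{Haagerup_theorem} by building the complete contraction directly from the universal property of $\otimes_{hA}$ (the defining bilinear map being $C^*$-multiplication of corners of $\Compact_A(A\oplus X\oplus Y)$) and then establishing the reverse norm estimate via an approximate-unit factorization; there the main obstacle would instead be proving injectivity of the comparison map.
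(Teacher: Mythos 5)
Your argument is correct, but there is nothing in the paper to compare it against: the paper does not prove Theorem \ref{tensor_compact_theorem} at all. It imports the result wholesale via the citation \cite[Corollary 8.2.15]{BlM}, which is why the statement ends in a tombstone with no proof. What you have done is reconstruct, from tools the paper does make available, the standard derivation of that corollary from Theorem \ref{Haagerup_theorem} (itself the other result imported from \cite{BlM}). Your three steps are sound: $\Compact_A(X,A)$ with composition as the right action and $\langle S,T\rangle = S^*T$ as the inner product is a Hilbert $\Compact_A(X,X)$-module carrying a nondegenerate, adjointable left $A$-action, so $X^*$ is indeed a correspondence from $A$ to $\Compact_A(X,X)$; your map $\Theta(y\otimes T)=\hat y\circ T$ is $A$-balanced and preserves the $\Compact_A(X,X)$-valued inner products (both sides equal $T_1^*\circ L_{\langle y_1,y_2\rangle}\circ T_2$, writing $L_a$ for left multiplication by $a$ on $A$); and since the range of $\Theta$ contains every rank-one operator, $\Theta$ is a unitary, hence completely isometric, isomorphism of Hilbert $\Compact_A(X,X)$-modules onto $\Compact_A(X,Y)$. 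The one point you defer to ``bookkeeping'' is genuinely needed and should be made explicit, because it is what makes your first step prove the statement as written rather than a statement about a different Haagerup tensor product: Theorem \ref{Haagerup_theorem} yields a complete isometry out of $Y\otimes_{hA}\Compact_A(X,A)$ formed with the \emph{Hilbert-module} operator space structure on the second factor, whereas the theorem concerns $Y\otimes_{hA}X^*$ formed with the \emph{conjugate} operator space structure of Definition \ref{def-operator-space-conj}. That these two structures on $X^*$ coincide completely isometrically is exactly Example \ref{compact_ci_example} (resting on Example \ref{operator_corner_example}), so a citation there closes the argument. With that single reference inserted, your proof is complete, and it has the merit of making the paper self-contained at this point, at the cost of still taking Theorem \ref{Haagerup_theorem} on faith from \cite{BlM}.
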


As in Example \ref{operator_corner_example}, $\Compact_A(X,Y)$ is a closed subspace of the $C^*$-algebra $\Compact_A(X\oplus Y)$ and it  is to be viewed as  an operator space in that way. Note that the operator space $Y\otimes_{hA} X^*$ does not depend on the inner products on $X$ and $Y$, but only on the induced operator space structures. In contrast, the action of $Y\otimes_{hA} X^*$ as operators $X\to Y$ appearing in Theorem \ref{tensor_compact_theorem} does depend on the $A$-valued inner product on $X$.

\section{Local Adjunctions for Hilbert Modules}\label{loc_adj_section}

\subsection{Definitions and Basic Properties}\label{adj_def_section}

We are ready now to introduce the main   concept of the paper.

\begin{definition}\label{ladj_definition}
 Let $A$ and $B$ be $C^*$-algebras, and let $E$ and $F$ be correspondences from $B$ to $A$ and from $A$ to $B$, respectively, determining tensor product functors 
 \[
 \H_A\stackrel{\otimes _A F}\longrightarrow  \H_B \quad \text{and} \quad   \H_B\stackrel{\otimes _B E}\longrightarrow \H_A .
 \]
 A \emph{local adjunction} between these functors is a natural isomorphism 
  \[
   \Phi_{X,Y}:\Compact_B(  X\otimes _A F , Y)\stackrel \cong \longrightarrow \Compact_A(X,  Y\otimes _B E )
  \]
that is, for each $X$ and $Y$, a continuous linear map.
\end{definition}

\begin{theorem}\label{ladj_cb_theorem}
Let $\Phi$ be a local adjunction, as in Definition~\ref{ladj_definition}.  Each of the linear maps
\[
 \Phi_{X,Y}:\Compact_B( X\otimes _A F , Y) \xrightarrow{\cong}  \Compact_A(X,  Y\otimes _B E)
\]
 is an isomorphism of operator spaces.
\end{theorem}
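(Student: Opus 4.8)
The plan is to deduce complete boundedness from ordinary boundedness by exploiting naturality against the inclusion and projection maps of direct sums, so that each matrix amplification of $\Phi_{X,Y}$ is reinterpreted as a single instance of $\Phi$ at larger modules. Write $X^{(n)}=X^{\oplus n}$ and $Y^{(n)}=Y^{\oplus n}$ for the $n$-fold Hilbert module direct sums. Since the internal tensor product commutes with finite direct sums, $X^{(n)}\otimes_A F\cong (X\otimes_A F)^{\oplus n}$, and viewing each space of compact operators as a corner of a $C^*$-algebra of compact operators on a direct sum (Example~\ref{operator_corner_example}) yields, at every matrix level, isometric identifications
\[
M_n\big(\Compact_B(X\otimes_A F, Y)\big)\cong \Compact_B\big(X^{(n)}\otimes_A F,\, Y^{(n)}\big),\qquad M_n\big(\Compact_A(X, Y\otimes_B E)\big)\cong \Compact_A\big(X^{(n)},\, Y^{(n)}\otimes_B E\big).
\]
Thus $\|M_n(\Phi_{X,Y})\|$ is computed inside these larger mapping spaces, and the theorem follows once they are bounded uniformly in $n$.

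The crucial step is to show that, under these identifications, $M_n(\Phi_{X,Y})=\Phi_{X^{(n)},Y^{(n)}}$. Let $\iota_i\colon Y\to Y^{(n)}$ and $\pi_j\colon X^{(n)}\to X$ be the canonical adjointable, contractive inclusions and projections; the identification of $M_n(\Compact_B(X\otimes_A F, Y))$ with $\Compact_B(X^{(n)}\otimes_A F, Y^{(n)})$ sends a matrix $[T_{ij}]$ to $\sum_{i,j}\iota_i\circ T_{ij}\circ(\pi_j\otimes\id_F)$. Applying $\Phi_{X^{(n)},Y^{(n)}}$ to this sum and using naturality of $\Phi$ in the $Y$-variable (against each $\iota_i$) and in the $X$-variable (against each $\pi_j$) rewrites it as $\sum_{i,j}(\iota_i\otimes\id_E)\circ\Phi_{X,Y}(T_{ij})\circ\pi_j$, which is precisely the matrix $[\Phi_{X,Y}(T_{ij})]$. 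This bookkeeping---correctly tracking the contravariance in $X$ and covariance in $Y$, and matching the corner-of-$C^*$-algebra operator space structures on both sides---is the main obstacle of the proof; everything else is a norm estimate. It gives $\|M_n(\Phi_{X,Y})\|=\|\Phi_{X^{(n)},Y^{(n)}}\|$.

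It remains to bound $\|\Phi_{X^{(n)},Y^{(n)}}\|$ uniformly in $n$. For this I would realize $X^{(n)}$ and $Y^{(n)}$ as orthogonal direct summands of the infinite direct sums $X^{(\infty)}=\bigoplus_{k=1}^\infty X$ and $Y^{(\infty)}=\bigoplus_{k=1}^\infty Y$, with isometric inclusions and contractive projections. Naturality of $\Phi$ against these maps, exactly as in the previous step, exhibits $\Phi_{X^{(n)},Y^{(n)}}$ as a compression of $\Phi_{X^{(\infty)},Y^{(\infty)}}$: for $T\in\Compact_B(X^{(n)}\otimes_A F, Y^{(n)})$ one obtains $\Phi_{X^{(n)},Y^{(n)}}(T)=(p\otimes\id_E)\circ\Phi_{X^{(\infty)},Y^{(\infty)}}(\widehat T)\circ\iota$, where $\iota,p$ are the structure maps and $\widehat T$ is the corresponding ampliation of $T$ (compact, with $\|\widehat T\|\le\|T\|$). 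Since $\iota$ and $p$ are contractive, this yields $\|\Phi_{X^{(n)},Y^{(n)}}\|\le\|\Phi_{X^{(\infty)},Y^{(\infty)}}\|$. As $\Phi_{X^{(\infty)},Y^{(\infty)}}$ is continuous by hypothesis, $\sup_n\|M_n(\Phi_{X,Y})\|\le\|\Phi_{X^{(\infty)},Y^{(\infty)}}\|<\infty$, so $\Phi_{X,Y}$ is completely bounded. The inverse natural transformation $(\Phi^{-1})_{X,Y}=(\Phi_{X,Y})^{-1}$ has continuous components by the bounded inverse theorem and is natural of exactly the same type, so the identical argument shows it too is completely bounded. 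Hence each $\Phi_{X,Y}$ is an isomorphism of operator spaces.
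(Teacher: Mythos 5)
Your proof is correct and takes essentially the same approach as the paper's: both exploit naturality of $\Phi$ against the adjointable inclusion and projection maps of direct sums to reinterpret each matrix amplification $M_n(\Phi_{X,Y})$ as (a compression of) a single instance of $\Phi$, and then bound everything by $\|\Phi_{X^\infty,Y^\infty}\|$, which is finite by the continuity hypothesis in Definition~\ref{ladj_definition}. The only cosmetic differences are that you pass through the finite direct sums $X^{(n)},Y^{(n)}$ before compressing from the infinite sum, whereas the paper embeds $M_n\bigl(\Compact_B(X\otimes_A F,Y)\bigr)$ isometrically into $\Compact_B(X^\infty\otimes_A F,Y^\infty)$ in one step, and that you make explicit the appeal to the bounded inverse theorem for the components of $\Phi^{-1}$, which the paper leaves implicit.
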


We will see later (Corollary \ref{ladj_cb_corollary}) that the matrix norms of $\Phi_{X,Y}$ are in fact bounded independently of $X$ and $Y$.

\begin{proof}
For a Hilbert $A$-module $X$, we let $X^\infty$ denote the orthogonal direct sum of countably many copies of $X$ (see \cite[8.1.9]{BlM}).   For each $n\geq 1$ there is a natural isometric embedding 
\[
M_n(\Compact_A(X,Z))\longrightarrow  \Compact_A(X^\infty, Z^\infty)
\]
that is defined by letting each $n\times n$ matrix act by matrix multiplication on the first $n$ copies of $X$ inside $X^\infty$, and by zero on the remaining copies. One also has isometric isomorphisms $(X\otimes_A F)^\infty \cong X^\infty \otimes_A F$. The diagram
\[ 
 \xymatrix@C=40pt{
 M_n(\Compact_B(X\otimes_A F, Y)) \ar[r]^-{M_n(\Phi_{X,Y})} \ar[d] & M_n(\Compact_A(X, Y\otimes_A E)) \ar[d] \\
 \Compact_B( X^\infty \otimes_A F, Y^\infty) \ar[r]^-{\Phi_{X^\infty, Y^\infty}} & \Compact_A(X^\infty, Y^\infty\otimes_B E)
 }
\]
commutes by the naturality of $\Phi$, showing that $\|M_n(\Phi_{X,Y})\|\leq \|\Phi_{X^\infty,Y^\infty}\|$ for every $n$. 
\end{proof}

As with adjoints, there is no distinction between left  and right local adjunctions: given a local adjunction $\Phi$ as above, we may define a second local adjunction,  
\begin{equation}
\label{eq-psi-from-phi2}
\Phi^*_{X,Y} \colon 
\Compact_A(  Y\otimes _B E, X)\stackrel \cong \longrightarrow  \Compact_A( Y,X\otimes _A F  )
\end{equation}
 by means of the formula 
\[
\Phi^*_{X,Y}:T\mapsto (\Phi_{X,Y})^{-1}(T^*)^* .
\] 
 This interchanges the roles played by the correspondences $E$ and $F$ in Definition~\ref{ladj_definition}.  And as with adjoints, it is very relevant to study units and counits associated to a local adjunction.  The following definition merely repeats Definition~\ref{def-unit-counit} in the present context.

\begin{definition}
\label{def-unit-counit2}
Let $\Phi$ be a local adjunction, as in Definition~\ref{ladj_definition}.
 \begin{enumerate}[\rm (a)]
   \item A \emph{unit} for $\Phi$ is a bounded, adjointable $A$-bimodule map 
  \[
 \eta\colon  A \longrightarrow F\otimes _B E
  \]
that defines $\Phi$ by means of the commuting diagram 
\[
\xymatrix@C=40pt{
 X\otimes _A A \ar[d]_{\cong} \ar[r]^-{\mathrm{id}_X \otimes \eta}  &  X\otimes _A F \otimes _B E \ar[d]^{ T \otimes \mathrm{id}_E} \\
 X \ar[r]_-{\Phi_{X,Y}(T)}  & Y\otimes _B E .
 }
\]
 
  \item A \emph{counit} for  $\Phi$  is a bounded, adjointable $B$-bimodule map 
  \[
 \varepsilon \colon E\otimes _A F  \longrightarrow B
  \]
that defines the inverse of the isomorphism $ \Phi_{X,Y}$ by means of the commuting diagram 
\[
\xymatrix@C=40pt{
 Y\otimes _B B\ar[d]_{\cong}   &  Y\otimes _B E \otimes _A F 
 \ar[l]_-{\mathrm{id}_Y \otimes \varepsilon}
\\
Y  & X\otimes _A F  \ar[l]^-{\Phi_{X,Y}^{-1}(T)}  \ar[u]_{ T \otimes \mathrm{id}_F} .
 }
\]

  \end{enumerate}
\end{definition}

Once again, these definitions are symmetric with respect to the transposition $\Phi \leftrightarrow \Phi^*$ given in (\ref{eq-psi-from-phi2}) above. 
If $\eta$ is a unit for $\Phi$, then the adjoint operator $\eta^*$ is a counit for $\Phi^*$, while if $\varepsilon$ is a counit for $\Phi$, then $\varepsilon ^*$ is a unit for $\Phi^*$.

\begin{lemma}\label{ladj_unit_lemma}
A local adjunction admits at most one unit and at most one counit.
\end{lemma}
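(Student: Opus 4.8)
The plan is to show that a unit $\eta$, if it exists, is completely pinned down by the single isomorphism $\Phi_{A,F}$, and then to deduce uniqueness of the counit from the duality $\Phi\leftrightarrow\Phi^*$ of (\ref{eq-psi-from-phi2}). First I would specialise the defining diagram of Definition~\ref{def-unit-counit2}(a) to $X=A$ and $Y=F$. Using the canonical isomorphisms $A\otimes_A A\cong A$ and $A\otimes_A F\cong F$, together with the fact that $\eta$ is an $A$-bimodule map (so that $a\cdot\eta(a')=\eta(aa')$), the commutativity of the diagram unwinds to the identity
\[
\Phi_{A,F}(T) = (T\otimes\id_E)\circ\eta
\]
of maps $A\to F\otimes_B E$, valid for every $T\in\Compact_B(F,F)$: the two sides agree on the dense set of products $aa'\in A$, and both are bounded, so they coincide. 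This exhibits the composite $(T\otimes\id_E)\circ\eta$ as something intrinsic to $\Phi$.

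Now suppose $\eta$ and $\eta'$ are both units, and put $\delta=\eta-\eta'\colon A\to F\otimes_B E$. The previous step gives $(T\otimes\id_E)\bigl(\delta(a)\bigr)=0$ for every $T\in\Compact_B(F,F)$ and every $a\in A$. To conclude that $\delta=0$ I would use an approximate identity $(u_\lambda)$ for the $C^*$-algebra $\Compact_B(F,F)$. Because the correspondence $F$ is nondegenerate one has $F=\overline{\Compact_B(F,F)\cdot F}$, so $u_\lambda f\to f$ for every $f\in F$; since the operators $u_\lambda\otimes\id_E$ are contractive and converge to the identity on elementary tensors, they converge strongly to $\id_{F\otimes_B E}$. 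Applying this to the vector $\delta(a)$ gives $\delta(a)=\lim_\lambda(u_\lambda\otimes\id_E)\bigl(\delta(a)\bigr)=0$, whence $\eta=\eta'$. This strong-convergence step is the only real content of the argument, and it is exactly where the \emph{compactness} in the definition of local adjunction enters: unlike the ordinary case (Proposition~\ref{adj_unit_counit_proposition}), we cannot simply feed $\id_F$ into $\Phi$, since $\id_F$ need not be compact, so we must instead approximate it by the compact operators $u_\lambda$. I expect this to be the main (indeed the only) obstacle, and it is mild.

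Finally, for the counit I would invoke the symmetry recorded immediately after Definition~\ref{def-unit-counit2}: if $\varepsilon$ is a counit for $\Phi$, then $\varepsilon^*$ is a unit for the local adjunction $\Phi^*$ of (\ref{eq-psi-from-phi2}). Thus two counits $\varepsilon_1,\varepsilon_2$ for $\Phi$ yield two units $\varepsilon_1^*,\varepsilon_2^*$ for $\Phi^*$, which agree by the uniqueness of units already established; taking adjoints gives $\varepsilon_1=\varepsilon_2$. This reduces the counit case to the unit case with no further calculation.
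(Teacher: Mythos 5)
Your proof is correct and follows essentially the same route as the paper's: both specialise the unit diagram at $X=A$, $Y=F$ to get $\Phi_{A,F}(T)=(T\otimes\id_E)\circ\eta$ for compact $T$, recover $\eta$ via an approximate unit $(u_\lambda)$ of $\Compact_B(F,F)$ using strong convergence of $u_\lambda\otimes\id_E$ to the identity, and deduce uniqueness of counits by the symmetry $\Phi\leftrightarrow\Phi^*$. The only cosmetic difference is that the paper exhibits $\eta$ directly as the strong-operator limit of $\Phi_{A,F}(u_\lambda)$, whereas you show the difference of two units vanishes; these are the same argument.
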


\begin{proof}
Let $\eta:A\to F\otimes_B E$ be a unit, and let $u_\lambda$ be an approximate unit in the $C^*$-algebra $\Compact_B(F,F)$. The map $\eta$ is the strong-operator limit of the net $(u_\lambda\otimes \id_E)\circ \eta$, and the unit property of $\eta$ identifies this net with $\Phi_{A,F}(u_\lambda)$. Thus $\eta$ is uniquely determined by $\Phi$. 
The uniqueness of counits follows by symmetry from the uniqueness of units.
\end{proof}

A local adjunction need admit neither a unit nor a counit, or it might admit  one without the other.  But in the examples of interest to us at least one will exist. It is also the case that every local adjunction admits a bounded (but not necessarily adjointable) counit $\epsilon:F\otimes_A E\to B$. See Section~\ref{sec-local-unit-counit}.

If a functor has an adjoint, then the adjoint is unique up to a canonical natural isomorphism. Local adjoints are, in general, only unique in the following weaker sense:

\begin{lemma}\label{ladj_uniqueness_lemma}
Suppose that a correspondence $F$ from $A$ to $B$ is a local adjoint to a correspondence $E$ from $B$ to $A$, and also to a second correspondence $G$ from $B$ to $A$. There is a canonical completely bounded isomorphism $E\cong G$ of $B$-$A$ operator bimodules.
\end{lemma}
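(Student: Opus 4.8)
The plan is to manufacture, from the two hypothesised local adjunctions, a single natural isomorphism comparing the functors $\otimes_B E$ and $\otimes_B G$, and then to extract the desired bimodule isomorphism by evaluating on the regular modules. Write $\Phi$ for the local adjunction between $\otimes_A F$ and $\otimes_B E$, and $\Psi$ for the one between $\otimes_A F$ and $\otimes_B G$. Since $\Phi_{X,Y}$ and $\Psi_{X,Y}$ share the source $\Compact_B(X\otimes_A F,Y)$, the composite
\[
\Theta_{X,Y}\coloneq\Psi_{X,Y}\circ\Phi_{X,Y}^{-1}\colon\Compact_A(X,Y\otimes_B E)\xrightarrow{\ \cong\ }\Compact_A(X,Y\otimes_B G)
\]
is well defined, natural in $X\in\H_A$ and $Y\in\H_B$, and an isomorphism of operator spaces by Theorem~\ref{ladj_cb_theorem}.

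Next I would specialise to $X=A$ and $Y=B$. Using the completely isometric identification $\Compact_A(A,Z)\cong Z$ of Lemma~\ref{lem-compacts} and Example~\ref{operator_corner_example}, together with the completely isometric multiplication isomorphisms $B\otimes_B E\cong E$ and $B\otimes_B G\cong G$, the map $\Theta_{A,B}$ transports to a completely bounded isomorphism $\theta\colon E\to G$ whose inverse is also completely bounded. As it is built solely from $\Phi$ and $\Psi$, it is canonical.

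The remaining and substantive task is to check that $\theta$ respects the $B$-$A$-bimodule structures, and this is where the naturality of $\Theta$ must do the work---it is also the main obstacle. For the right $A$-action I would use naturality in $X$: for $a\in A$, left multiplication $L_a\colon A\to A$ is an adjointable endomorphism of the Hilbert module $A$ (with adjoint $L_{a^*}$), and under $\Compact_A(A,Z)\cong Z$ precomposition by $L_a$ corresponds to right multiplication by $a$, so the naturality square for $\Theta_{-,B}$ yields $\theta(e\cdot a)=\theta(e)\cdot a$. For the left $B$-action I would use naturality in $Y$: for $b\in B$, the operator $L_b\colon B\to B$ is likewise adjointable, and under $B\otimes_B E\cong E$ the operator $L_b\otimes\id_E$ corresponds to left multiplication by $b$ on $E$, so naturality of $\Theta_{A,-}$ gives $\theta(b\cdot e)=b\cdot\theta(e)$. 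The crux is exactly this translation---verifying that feeding these left-multiplication endomorphisms into the two naturality squares reproduces precisely the right $A$- and left $B$-actions under the canonical identifications; once it is carried out, $\theta$ is simultaneously a bimodule map and a completely bounded isomorphism, which is the assertion. I note that no unitality of $A$ or $B$ is needed, since left-multiplication operators on a $C^*$-algebra are always adjointable.
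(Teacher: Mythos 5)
Your proof is correct and is essentially the paper's own argument: the paper composes the completely bounded isomorphisms $E\cong \Compact_A(A,E)\cong \Compact_B(F,B)\cong \Compact_A(A,G)\cong G$ coming from the two local adjunctions evaluated at $X=A$, $Y=B$, and this composite is precisely your $\theta=\Theta_{A,B}$ transported along the canonical identifications. The only difference is that you spell out, via naturality in $X$ and $Y$ applied to left-multiplication operators, the bimodule compatibility that the paper asserts without comment; that verification is valid.
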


\begin{proof}
The two local adjunctions give completely bounded isomorphisms
\[ E\cong \Compact_A(A,E)\cong \Compact_B(F,B)\cong \Compact_A(A,G)\cong G\]
of $B$-$A$-bimodules. 
\end{proof}

\begin{remarks} The converse of Lemma \ref{ladj_uniqueness_lemma} is also true: Theorems \ref{Haagerup_theorem} and \ref{tensor_compact_theorem} together imply that up to natural isomorphism, $\Compact_A(X, Y\otimes_B E)$ depends only on the operator bimodule structure of $E$.  In the course of proving Theorem \ref{ladj_bimod_theorem}, below, we will in fact establish a bijection between the set of local adjunctions between $F$ and $E$, and the set of $B$-$A$ operator bimodule isomorphisms $F^*\xrightarrow{\cong} E$.

On the question of uniqueness, we will later see (Proposition \ref{counit_uniqueness_proposition}) that if there exists a counit 
\[
\epsilon: E\otimes _A F \longrightarrow B,
\]
 then the canonical isomorphism $E\cong G$ of Lemma \ref{ladj_uniqueness_lemma} is adjointable, and so furnishes a natural isomorphism between the tensor product functors $E$ and $G$. In particular, if the tensor product functor $F$ has an adjoint, then it has a unique local adjoint. In the absence of a counit, a tensor product functor may admit several  local adjoints that are not isomorphic to one another as correspondences. See Section~\ref{sec-miscellany}.  
\end{remarks}

\subsection{Local Adjunctions from Adjunctions}\label{scts_section}

\begin{lemma}\label{adj_sc_lemma} 
If  $F:\H_A\to \H_B$ is a tensor product functor with an adjoint $E\colon \H_B\to \H_A$, then 
for all $X\in \H_A$ and all $Y\in \H_B$, the natural isomorphism
 \[\Phi_{X,Y}:\Bounded_B(X\otimes_A F,Y)\to \Bounded_A(X, Y \otimes_B E)\] maps $\Compact_B(X \otimes_A F,Y)$ isomorphically onto $\Compact_A(X,Y\otimes_B E)$.
\end{lemma}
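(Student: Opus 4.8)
The plan is to establish two inclusions --- that $\Phi_{X,Y}$ carries $\Compact_B(X\otimes_A F, Y)$ into $\Compact_A(X, Y\otimes_B E)$, and that its inverse carries compacts back --- and then to combine them with Lemma~\ref{adj_cb_lemma}, which already guarantees that $\Phi_{X,Y}$ is a topological isomorphism on the full spaces of adjointable operators. The whole argument rests on the description of $\Phi$ through its unit $\eta\colon A\to F\otimes_B E$ (Proposition~\ref{adj_unit_counit_proposition}, Definition~\ref{def-unit-counit}), together with the standing principle that the composite of a compact operator with an adjointable one is again compact.

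For the forward inclusion I would first reduce to rank-one operators. The operators $y\otimes(x\otimes f)^*$, with $y\in Y$, $x\in X$, $f\in F$, span a dense subspace of $\Compact_B(X\otimes_A F, Y)$, since elementary tensors are dense in $X\otimes_A F$; as $\Phi_{X,Y}$ is continuous and $\Compact_A(X, Y\otimes_B E)$ is norm-closed, it suffices to show each $\Phi_{X,Y}\bigl(y\otimes(x\otimes f)^*\bigr)$ is compact. Using Lemma~\ref{lem-compacts2} I would write $x^*\in\Compact_A(X,A)$ for the compact operator $x'\mapsto\langle x,x'\rangle$, and factor the rank-one operator as
\[
y\otimes(x\otimes f)^* = (y\otimes f^*)\circ(x^*\otimes\id_F),
\]
where $x^*\otimes\id_F\colon X\otimes_A F\to A\otimes_A F\cong F$ and $y\otimes f^*\colon F\to Y$.

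The key computation is then to run this factorization through the unit diagram. Writing $\tilde\eta_X\colon X\xrightarrow{\cong}X\otimes_A A\xrightarrow{\id_X\otimes\eta}X\otimes_A F\otimes_B E$, the defining diagram of Definition~\ref{def-unit-counit} gives $\Phi_{X,Y}(T)=(T\otimes\id_E)\circ\tilde\eta_X$. The assignment $X\mapsto\tilde\eta_X$ is natural in $X$, and applying naturality to the adjointable map $x^*\colon X\to A$ (and using $\tilde\eta_A=\eta$) yields $(x^*\otimes\id_{F\otimes_B E})\circ\tilde\eta_X=\eta\circ x^*$. Substituting the factorization of the rank-one operator and reassociating the tensor factors, I expect to obtain
\[
\Phi_{X,Y}\bigl(y\otimes(x\otimes f)^*\bigr)=\bigl((y\otimes f^*)\otimes\id_E\bigr)\circ\eta\circ x^*,
\]
in which both $(y\otimes f^*)\otimes\id_E\colon F\otimes_B E\to Y\otimes_B E$ and the unit $\eta$ are adjointable. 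The operator is therefore an adjointable operator composed with the compact operator $x^*$, hence compact. This is the crux, and the main obstacle to anticipate: the naive formula $(T\otimes\id_E)\circ\tilde\eta_X$ does \emph{not} exhibit compactness directly, because $T\otimes\id_E$ need not be compact even when $T$ is. The factorization is engineered precisely to transfer all of the compactness onto the single factor $x^*$, leaving everything else adjointable.

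For the reverse inclusion I would exploit the symmetry of adjunctions. The transpose $\Phi^*_{X,Y}(S)=\Phi^{-1}_{X,Y}(S^*)^*$ of~(\ref{eq-psi-from-phi}) is again an adjunction, now with the roles of $E$ and $F$ interchanged, so the forward inclusion just proved applies to it verbatim and shows that $\Phi^*$ carries compacts into compacts. Given $S\in\Compact_A(X, Y\otimes_B E)$, its adjoint $S^*$ is compact, whence $\Phi^*_{X,Y}(S^*)=\Phi^{-1}_{X,Y}(S)^*$ is compact, and so $\Phi^{-1}_{X,Y}(S)$ is compact as well (the adjoint of a compact operator being compact). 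Combining the two inclusions with Lemma~\ref{adj_cb_lemma} then gives that $\Phi_{X,Y}$ restricts to a topological isomorphism of $\Compact_B(X\otimes_A F, Y)$ onto $\Compact_A(X, Y\otimes_B E)$, as asserted.
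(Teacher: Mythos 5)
Your proof is correct, and it follows the same broad strategy as the paper --- reduce to a dense spanning set of rank-one operators using the continuity supplied by Lemma~\ref{adj_cb_lemma}, show those images are compact by pulling a compact factor outside $\Phi$, then handle the inverse by symmetry --- but the execution of both halves differs. For the forward inclusion the paper spans the compacts by operators $y\otimes(Kx\otimes f)^*$ with $K\in\Compact_A(X,X)$ (implicitly using that vectors $Kx$ exhaust $X$), writes such an operator as $(y\otimes(x\otimes f)^*)\circ(K^*\otimes 1_F)$, and applies naturality of $\Phi$ in the $X$-variable to pull out $K^*$; no unit appears anywhere. You instead factor the bare rank-one operator through $A$ as $(y\otimes f^*)\circ(x^*\otimes\id_F)$ with $x^*\in\Compact_A(X,A)$, and route the calculation through the unit $\eta$ and its naturality. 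The two mechanisms are equivalent: your identity $\Phi_{X,Y}\bigl(y\otimes(x\otimes f)^*\bigr)=\bigl((y\otimes f^*)\otimes\id_E\bigr)\circ\eta\circ x^*$ is exactly naturality of $\Phi$ applied to the morphism $x^*\colon X\to A$, since the unit diagram gives $\Phi_{A,Y}(y\otimes f^*)=\bigl((y\otimes f^*)\otimes\id_E\bigr)\circ\eta$; for compactness all you actually need is that $\Phi_{A,Y}(y\otimes f^*)$ is adjointable, so the explicit unit formula is a harmless but unnecessary detour. What your factorization buys is that it avoids the Cohen-factorization-type density fact behind the paper's spanning set; what the paper's buys is that it never invokes units at all. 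For the reverse inclusion the paper simply says to run ``a similar argument'' on $\Phi^{-1}$ (which, if carried out, requires naturality in the $Y$-variable this time), whereas you pass to the transposed adjunction $\Phi^*$ of \eqref{eq-psi-from-phi} and use that adjoints of compact operators are compact; this literally reuses the forward inclusion rather than rerunning it, and is more explicit than the paper's one-line appeal to similarity.
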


\begin{proof}
The space $\Compact_B(X\otimes_A F, Y)$ is densely spanned by operators of the form 
\[
L=y\otimes (Kx\otimes f)^*,
\]
 where $y\in Y$, $x\in X$, $f\in F$, $K\in \Compact_A(X,X)$, and we are using the notation of \eqref{eq-rank-one1}. The naturality of $\Phi$ gives
\[ \Phi_{X,Y}(L) = \Phi_{X,Y}\bigl(\, (y\otimes(x\otimes f)^*)\circ (K^*\otimes 1_F)\,\bigr) = \Phi_{X,Y}\bigl (\, y\otimes(x\otimes f)^*\, \bigr)\circ K^*,\]
which is compact because $K^*$ is.  This shows that $\Phi$ maps compact operators into compact operators, and a similar argument applied to $\Phi^{-1}$ shows that the map is a bijection.
\end{proof}

\begin{corollary}\label{adj_implies_ladj_corollary}
Every   adjunction of tensor product functors
\[
 \Phi_{X,Y} \colon \Bounded_B(  X\otimes _A F ,Y)\stackrel \cong \longrightarrow  \Bounded_A(X,  Y\otimes _B E) 
 \]
restricts to a local adjunction 
\[
 \Phi_{X,Y} \colon \Compact_B(X\otimes _A F, Y)\stackrel \cong \longrightarrow \Compact_A(X, Y\otimes _B E).
\] 
\qed
\end{corollary}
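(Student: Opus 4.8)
The plan is to observe that this corollary is essentially immediate from Lemma~\ref{adj_sc_lemma}, which does all the substantive work. An adjunction $\Phi$ is, by Definition~\ref{adj_definition}, a natural isomorphism of the two set-valued functors $(X,Y)\mapsto \Bounded_B(X\otimes_A F, Y)$ and $(X,Y)\mapsto \Bounded_A(X, Y\otimes_B E)$ on $\H_A^{\mathrm{op}}\times \H_B$. What I must produce, according to Definition~\ref{ladj_definition}, is a natural isomorphism between the corresponding compact-operator subspaces that is continuous and linear in each variable. So the strategy is simply to restrict $\Phi$ and verify the three requirements: bijectivity at each fixed pair, continuity and linearity, and naturality.

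First I would invoke Lemma~\ref{adj_sc_lemma} directly: for every $X\in\H_A$ and $Y\in\H_B$, the map $\Phi_{X,Y}$ carries $\Compact_B(X\otimes_A F, Y)$ bijectively onto $\Compact_A(X, Y\otimes_B E)$. This supplies the required isomorphism at the level of each fixed pair $(X,Y)$. Linearity of the restriction is inherited at once from linearity of $\Phi_{X,Y}$, and continuity follows from Lemma~\ref{adj_cb_lemma}, which shows that each $\Phi_{X,Y}$ is a topological isomorphism; the restriction of a continuous linear map to a subspace is again continuous.

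The only point requiring a word of justification is naturality, so next I would check that the compact-operator assignments are genuinely subfunctors of the $\Bounded$-valued Hom-functors appearing in Definition~\ref{adj_definition}. This is exactly the fact, recorded in the subsection on compact operators, that the composition of a compact operator with an adjointable operator (on either side) is again compact: given morphisms $S\colon X'\to X$ in $\H_A$ and $T\colon Y\to Y'$ in $\H_B$, pre- and post-composition carry $\Compact_B(X\otimes_A F, Y)$ into $\Compact_B(X'\otimes_A F, Y')$, and likewise on the other side. Thus the inclusions $\Compact\hookrightarrow\Bounded$ are natural transformations of functors on $\H_A^{\mathrm{op}}\times\H_B$. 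Since $\Phi$ is natural and restricts to an isomorphism between these two subfunctors, its restriction is automatically natural.

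Assembling these observations yields precisely a local adjunction in the sense of Definition~\ref{ladj_definition}. I do not anticipate any genuine obstacle: all the real content---that an adjunction preserves compactness of operators---was already established in Lemma~\ref{adj_sc_lemma}, and the remaining verifications are formal. If any step deserves care it is the naturality bookkeeping, but this reduces to the elementary fact that compactness is preserved under composition with adjointable operators, which is why the statement is labeled a corollary and its proof may reasonably be left to this brief restriction argument.
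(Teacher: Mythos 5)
Your proposal is correct and follows exactly the paper's route: the paper also deduces the corollary immediately from Lemma~\ref{adj_sc_lemma}, leaving the formal verifications (linearity, continuity via Lemma~\ref{adj_cb_lemma}, and naturality via stability of compacts under composition with adjointables) implicit, which you have simply spelled out.
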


\subsection{Characterization of Local Adjunctions}\label{adj_bimod_section}

\begin{theorem}\label{ladj_bimod_theorem}
 A  tensor product functor   
from $
   \H_A$ to $ \H_B
$,
 induced from a correspondence $F$,    has a locally adjoint tensor product functor if and only if both of the following conditions are met:
 \begin{enumerate}[\rm (a)]
 
 \item The conjugate  $B$-$A$-bimodule  $ {F}^*$ carries an $A$-valued inner product making  it  into a correspondence from $B$ to $A$.
 
 \item  The conjugate operator space structure  on $F^*$ is completely boundedly equivalent to the Hilbert $A$-module operator space structure on $F^*$.
 \end{enumerate}
  \end{theorem}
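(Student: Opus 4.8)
The plan is to turn every space of compact operators occurring in a local adjunction into a threefold Haagerup tensor product, thereby reducing the existence of a local adjoint to the existence of a suitable isomorphism of operator bimodules $F^*\cong E$. Concretely, for a Hilbert $A$-module $X$ and a Hilbert $B$-module $Y$, Theorems~\ref{tensor_compact_theorem} and \ref{Haagerup_theorem} together with Haagerup duality $(X\otimes_{hA}F)^*\cong F^*\otimes_{hA}X^*$ give completely isometric, natural identifications
\[
\Compact_B(X\otimes_A F, Y)\;\cong\;Y\otimes_{hB}F^*\otimes_{hA}X^*\qquad\text{and}\qquad\Compact_A(X, Y\otimes_B E)\;\cong\;Y\otimes_{hB}E\otimes_{hA}X^*,
\]
where on the left $F^*$ carries its conjugate operator space structure and on the right $E$ carries its Hilbert $A$-module structure. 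A natural transformation between the two sides should then be detected by a single completely bounded $B$-$A$-bimodule map $F^*\to E$, and a local adjunction should correspond to such a map that is an isomorphism. I would in fact establish this as a bijection between local adjunctions and $B$-$A$ operator bimodule isomorphisms $F^*\xrightarrow{\cong}E$, which also supplies the assertion made in the remarks preceding the theorem.

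For the \emph{only if} direction, suppose $E$ is a correspondence from $B$ to $A$ whose tensor product functor is locally adjoint to that of $F$, via a local adjunction $\Phi$. Evaluating at $X=A$ and $Y=B$, using $A\otimes_A F\cong F$ and $B\otimes_B E\cong E$ together with the completely isometric identifications $\Compact_B(F,B)\cong F^*$ and $\Compact_A(A,E)\cong E$ (Lemmas~\ref{lem-compacts}, \ref{lem-compacts2} and Examples~\ref{operator_corner_example}, \ref{compact_ci_example}), the map $\Phi_{A,B}$ becomes a linear isomorphism $\theta\colon F^*\to E$. By Theorem~\ref{ladj_cb_theorem} it is an isomorphism of operator spaces. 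The key computation is that $\theta$ is a $B$-$A$-bimodule map: applying naturality of $\Phi$ to the adjointable operators of left multiplication $L_a\colon A\to A$ and $L_b\colon B\to B$, and unwinding the module structures on $F^*$ (Definitions~\ref{conjugate_definition1}, \ref{conjugate_definition2}) and on $\Compact_A(A,E)\cong E$, one checks that pre- and postcomposition with $L_a$, $L_b$ correspond exactly to the right $A$- and left $B$-actions on both $F^*$ and $E$. Transporting the $A$-valued inner product of $E$ to $F^*$ along $\theta$ equips $F^*$ with the structure of a correspondence from $B$ to $A$ (nondegeneracy of the induced left $B$-action being automatic), which is condition (a); and since $\theta$ is then a Hilbert $A$-module isomorphism for this inner product, the identity on $F^*$, written as $\theta^{-1}\circ\theta$, is completely bounded with completely bounded inverse when regarded as a map between the conjugate and the Hilbert $A$-module operator space structures, which is condition (b).

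For the \emph{if} direction, assume (a) and (b) and set $E:=F^*$ with the given $A$-valued inner product, a correspondence from $B$ to $A$. By condition (b), the identity map on $F^*$, regarded as a map from the conjugate operator space structure to the Hilbert $A$-module operator space structure, is a $B$-$A$-bimodule map that is completely bounded with completely bounded inverse. Applying the functor $Y\otimes_{hB}(\argument)\otimes_{hA}X^*$ (functoriality of the Haagerup tensor product along completely bounded module maps) and composing with the identifications displayed above produces, for each $X$ and $Y$, a continuous linear isomorphism
\[
\Phi_{X,Y}\colon\Compact_B(X\otimes_A F, Y)\xrightarrow{\;\cong\;}\Compact_A(X, Y\otimes_B E);
\]
naturality in $X$ and $Y$ follows from the naturality of the identifications and the functoriality of the Haagerup product. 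This is the required local adjunction.

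The main obstacle, and the one place where genuine care is needed, is the bimodule-map verification in the \emph{only if} direction. Everything else is formal bookkeeping with the Haagerup machinery and the completely isometric identifications of Examples~\ref{operator_corner_example} and \ref{compact_ci_example}; but the claim that $\theta$ intertwines the module structures rests on correctly matching the naturality squares of $\Phi$ against the conjugate-module conventions of Definitions~\ref{conjugate_definition1} and \ref{conjugate_definition2}, and on the fact that the left multiplications $L_a$ and $L_b$ are adjointable operators on the modules $A$ and $B$, so that naturality genuinely applies to them.
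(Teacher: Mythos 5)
Your proposal is correct and follows essentially the same route as the paper: the \emph{only if} direction extracts a completely bounded $B$-$A$-bimodule isomorphism $F^*\cong E$ from $\Phi_{A,B}$ via Lemmas~\ref{lem-compacts} and \ref{lem-compacts2} and transports the inner product, while the \emph{if} direction strings together the identifications of Theorems~\ref{Haagerup_theorem} and \ref{tensor_compact_theorem} with the conjugation-compatibility of the Haagerup tensor product, exactly as in the paper. Your added detail on the bimodule-map verification (naturality against the left multiplications $L_a$, $L_b$) is a correct elaboration of a step the paper leaves implicit.
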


\begin{proof}
First suppose that $E$ is a local adjoint to $ F$.  The    isomorphism 
\[ 
\Phi_{A,B}:\Compact_B(F,B)\stackrel \cong \longrightarrow  \Compact_A(A,E)
\]
gives, using Lemma~\ref{lem-compacts}, an isomorphism of $B$-$A$-bimodules 
\[
F^* \stackrel \cong \longrightarrow  E  
\]
that is, in addition,   an operator space isomorphism. The $A$-valued inner product on $F^*$ inherited from $E$ via this isomorphism satisfies conditions (a) and (b).   

Conversely, suppose we are given a compatible  $A$-valued inner product making $F^*$ into a correspondence from $B$ to $A$.  Let us introduce a second symbol, $E$, for this correspondence, and give $E$ the operator space structure it inherits from its Hilbert $A$-module structure (in contrast, we assign to $F^*$ the operator space structure it receives as the conjugate of $F$). We have sequences of natural isomorphisms of operator spaces
\begin{align*} \Compact_B( X\otimes _A F ,Y)
& \underset{(1)}{\cong} Y\otimes_{hB}  \left( { X\otimes_A F}\right)^*  \\
& \underset{(2)}{\cong} Y\otimes_{hB} \left(  {X\otimes_{hA} F}\right )^*  \\
& \underset{(3)}{\cong} Y\otimes_{hB} {F}^* \otimes_{hA} X^* 
\end{align*}
and 
\begin{align*}
Y\otimes_{hB} E \otimes_{hA} X^* 
& \underset{(4)}{\cong} Y\otimes_B E \otimes_{hA} X^*  \\
& \underset{(5)}{\cong} \Compact_A(X,  Y\otimes _B E) ,
\end{align*}
as follows.  The isomorphisms (1) and (5) come from  Theorem \ref{tensor_compact_theorem}, while  (2) and (4) come from Theorem \ref{Haagerup_theorem}.  The isomorphism  (3) is a result of the  compatibility of the Haagerup tensor product with the conjugation operation on operator spaces.  If we assume  that the identity map from $F^*$ to $E$ is a completely bounded isomorphism of operator spaces, then of course 
\[
Y\otimes_{hB} {F}^* \otimes_{hA} X^*  \cong Y\otimes_{hB} E \otimes_{hA} X^* ,
\]
and we can link all of the displayed isomorphisms together to obtain   a local adjunction between $F$ and $E$. 
\end{proof}

Since a local adjunction between $F$ and $E$ determines canonically an operator space isomorphism $E\cong F^*$---and conversely---we will usually just write $F^*$ instead of $E$ from now on, keeping in mind that this implies the choice of a suitable $A$-valued inner product on $F^*$. 

\begin{remarks}
The proof gives a very simple formula for the adjunction isomorphism 
\[
\Phi_{X,Y}:\Compact_B( X\otimes _A F , Y)\to \Compact_A(X,  Y\otimes _B F^*) ,
\]
namely 
\begin{equation}\label{Phi_equation}
 \Phi_{X,Y}\bigl (\, y \otimes  (x\otimes f)^*\,\bigr) = (y \otimes f^* )\otimes x^* .
\end{equation}
We also note that, given a local adjunction between $F$ and $F^*$, the equivalence of the two operator space structures on $F^*$ allows us to combine Theorems \ref{Haagerup_theorem} and \ref{tensor_compact_theorem} to obtain canonical isomorphisms
\begin{equation}\label{ladj_compact_equation}
\Compact_B(F,F)\cong F\otimes_B F^* \qquad\text{and}\qquad \Compact_A(F^*,F^*)\cong F^*\otimes_A F
\end{equation}
of $A$-$A$-bimodules and $B$-$B$-bimodules, respectively. 
\end{remarks}

\begin{corollary}\label{ladj_cb_corollary}
 Let $\Phi$ be a local adjunction, as in Definition~\ref{ladj_definition}.  The matrix norms of the isomorphisms
$\Phi_{X,Y}$ are bounded independently of $X$ and $Y$:
\[ \| \Phi_{X,Y}\|_{\cb} \leq \|\Phi_{A,B}\|_{\cb} \leq \|\Phi_{A^\infty,B^\infty}\|.\]
\end{corollary}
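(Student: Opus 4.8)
The plan is to read both inequalities off the proofs of Theorems~\ref{ladj_bimod_theorem} and \ref{ladj_cb_theorem}, with essentially no new computation. First I would recall, from Theorem~\ref{ladj_bimod_theorem} and the remark following it, that the local adjunction $\Phi$ is induced by a completely bounded isomorphism $\theta\colon F^*\to E$ of $B$-$A$ operator bimodules, and that the chain of isomorphisms (1)--(5) in that proof realizes $\Phi_{X,Y}$ as a composite of those five completely isometric isomorphisms with the single map $\mathrm{id}_Y\otimes_{hB}\theta\otimes_{hA}\mathrm{id}_{X^*}$ inserted between steps (3) and (4). Since the outer five maps are completely isometric, conjugating by them preserves the completely bounded norm, and hence $\|\Phi_{X,Y}\|_{\cb}=\|\mathrm{id}_Y\otimes_{hB}\theta\otimes_{hA}\mathrm{id}_{X^*}\|_{\cb}$.

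Next I would bound the middle map using functoriality of the Haagerup tensor product for completely bounded module maps \cite[Lemma 3.4.5]{BlM}, which yields $\|\mathrm{id}_Y\otimes_{hB}\theta\otimes_{hA}\mathrm{id}_{X^*}\|_{\cb}\le\|\mathrm{id}_Y\|_{\cb}\,\|\theta\|_{\cb}\,\|\mathrm{id}_{X^*}\|_{\cb}=\|\theta\|_{\cb}$. To turn this into the stated bound by $\|\Phi_{A,B}\|_{\cb}$, I would specialize to $X=A$ and $Y=B$: using the completely isometric identifications $B\otimes_{hB}M\cong M$ and $M\otimes_{hA}A^*\cong M$ (the latter from Lemma~\ref{lem-compacts} together with Theorem~\ref{tensor_compact_theorem}), the middle map collapses completely isometrically to $\theta$ itself, so that $\|\Phi_{A,B}\|_{\cb}=\|\theta\|_{\cb}$. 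Combining the two computations gives the first inequality $\|\Phi_{X,Y}\|_{\cb}\le\|\Phi_{A,B}\|_{\cb}$.

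For the second inequality I would simply specialize the commuting-square argument from the proof of Theorem~\ref{ladj_cb_theorem} to the case $X=A$, $Y=B$, obtaining $\|M_n(\Phi_{A,B})\|\le\|\Phi_{A^\infty,B^\infty}\|$ for every $n$, and then take the supremum over $n$. The one point demanding care is the collapse of the middle map to $\theta$ when $X=A$, $Y=B$: I must verify that the unit isomorphisms $B\otimes_{hB}M\cong M$ and $M\otimes_{hA}A^*\cong M$ are genuinely completely isometric and compatible with the bimodule actions, so that no spurious constant is introduced and $\|\Phi_{A,B}\|_{\cb}$ equals $\|\theta\|_{\cb}$ exactly rather than merely bounding it from below. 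Everything else is an immediate application of results already established.
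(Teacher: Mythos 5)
Your proof is correct and takes essentially the same route as the paper's: the paper likewise identifies the compact-operator spaces with Haagerup tensor products via Theorems~\ref{Haagerup_theorem} and \ref{tensor_compact_theorem}, writes $\Phi_{X,Y}$ as $1_Y\otimes\Phi_{A,B}\otimes 1_{X^*}$, applies the functoriality of the Haagerup tensor product \cite[Lemma 3.4.5]{BlM} for the first inequality, and invokes Theorem~\ref{ladj_cb_theorem} for the second. Your extra care in checking that the unit identifications are completely isometric, so that $\|\Phi_{A,B}\|_{\cb}$ equals $\|\theta\|_{\cb}$ exactly, is precisely what the paper leaves implicit in treating $\Phi_{A,B}$ and the bimodule map $F^*\to E$ interchangeably.
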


\begin{proof}
Using Theorem \ref{tensor_compact_theorem} to identify spaces of compact operators with Haagerup tensor products, the map $\Phi_{X,Y}$ is given by
\[ 1_Y\otimes \Phi_{A,B} \otimes 1_{X^*}:Y\otimes_{hB}  F^* \otimes_{hA} X^* \to Y\otimes_{hB}  E \otimes_{hA} X^*.\]
The functoriality of the Haagerup tensor product (see \cite[Lemma 3.4.5]{BlM}) then gives $\|\Phi_{X,Y}\|_{\cb} \leq \|\Phi_{A,B}\|_{\cb}$. Theorem \ref{ladj_cb_theorem} gives the second inequality, $\|\Phi_{A,B}\|_{\cb}\leq \|\Phi_{A^\infty,B^\infty}\|$.
\end{proof}

\subsection{Local Adjunctions and  Representations}

In addition to the tensor product functors $ F:\H_A\to \H_B$ studied in the previous sections, every correspondence $F$ from $A$ to $B$ induces a functor 
\[
F = F\otimes_B : {}_B\H\longrightarrow  {}_A\H
\] 
between the categories of (nondegenerate) Hilbert space representations of $A$ and $B$.  In this section we shall prove the following result.

\begin{theorem}
\label{rep-adjunction-theorem}
Let $F$ be a correspondence from $A$ to $B$, and let $E$ be a correspondence from $B$ to $A$. Every local adjunction between $F$ and $E$ gives rise to a (two-sided) adjunction between the tensor product functors 
\[
F :{}_B\H\longrightarrow  {}_A\H \quad \text{and} \quad E :{}_A\H\longrightarrow  {}_B\H
\]
on categories of Hilbert space representations. 
\end{theorem}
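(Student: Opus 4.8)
The plan is to realize the representation-level adjunction through an explicit unit and counit, built from the inner products already present on $F$ and on its conjugate. First I would record two simplifications. Objects of ${}_A\H$ are exactly correspondences from $A$ to $\C$, and under this identification the functor $E\colon {}_A\H\to {}_B\H$ is $W\mapsto E\otimes_A W$ while $F\colon {}_B\H\to {}_A\H$ is $V\mapsto F\otimes_B V$, internal tensor products landing in Hilbert spaces. Second, by Theorem~\ref{ladj_bimod_theorem} a local adjunction between $F$ and $E$ supplies an operator-bimodule identification $E\cong F^*$ together with a compatible $A$-valued inner product on $F^*$; I will write $E=F^*$ throughout. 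Because the transpose $\Phi^*$ of (\ref{eq-psi-from-phi2}) is itself a local adjunction, now between $E$ and $F$, it is enough to exhibit $F$ as a left adjoint of $E$: running the identical construction on $\Phi^*$ then exhibits $E$ as a left adjoint of $F$, and the two statements together are precisely the asserted two-sided adjunction.

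To produce the counit I would use condition (a). For a representation $W$ of $A$, define $\epsilon_W\colon F\otimes_B F^*\otimes_A W\to W$ on elementary tensors by $\epsilon_W(f\otimes g^*\otimes w)=\langle f^*,g^*\rangle_{F^*}\cdot w$, where $\langle\,\cdot\,,\,\cdot\,\rangle_{F^*}$ is the $A$-valued inner product on $F^*$ and $f^*,g^*\in F^*$. A short computation with the conventions of Definitions~\ref{conjugate_definition1} and \ref{conjugate_definition2} shows that $\epsilon_W$ is $A$-linear and well defined over both balancings (the $\otimes_B$ balancing using that $E=F^*$ is a genuine correspondence from $B$ to $A$), so it is a morphism in ${}_A\H$, natural in $W$. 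The unit $\eta_V\colon V\to F^*\otimes_A F\otimes_B V$ I would then define as the Hilbert-space adjoint of the analogous counit for $\Phi^*$, the latter built in the same way from the $B$-valued inner product on $F$; since the adjoint of an intertwiner is an intertwiner and the tensor functors are $*$-functors, $\eta$ is again a natural transformation.

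The heart of the matter, and the step I expect to be the main obstacle, is boundedness of $\epsilon_W$ on the completed tensor product. It factors as $F\otimes_B F^*\otimes_A W\xrightarrow{\mu\otimes\id_W}A\otimes_A W\xrightarrow{\cong}W$, where $\mu\colon F\otimes_B F^*\to A$ sends $f\otimes g^*$ to $\langle f^*,g^*\rangle_{F^*}$ and the second map is the contractive module action; thus everything reduces to complete boundedness of $\mu$. Using Theorems~\ref{Haagerup_theorem} and \ref{tensor_compact_theorem} to identify $F\otimes_B F^*$ with $\Compact_B(F,F)$ via the Haagerup tensor product, the complete boundedness of $\mu$ amounts to the statement that the identity map on $F^*$, from the conjugate operator-space structure to the Hilbert-module operator-space structure, is completely bounded; this is exactly condition (b), equivalently the complete boundedness of $\Phi_{A,B}$ furnished by Theorem~\ref{ladj_cb_theorem} and Corollary~\ref{ladj_cb_corollary}. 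This is where the hypothesis genuinely enters: in the example $A=B=\C$ with $F$ an infinite-dimensional Hilbert space, $\mu$ is the unbounded trace and no adjunction exists, consistent with the failure of (b) there. Granting boundedness of $\epsilon_W$, the unit $\eta_V$ is then bounded automatically, being a Hilbert-space adjoint.

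Finally I would verify the triangle identities and assemble the adjunction. Evaluated on elementary tensors, the zig-zag identities collapse to the standard reconstruction identity for the inner products on $F$ and $F^*$ (a Parseval-type relation), and they extend from the algebraic tensor product by continuity using the boundedness just established; moreover the two triangle identities for $F\dashv E$ are the Hilbert-space adjoints of those for $E\dashv F$, so only one set requires checking. The bijection $\Hom_A(F\otimes_B V,W)\cong \Hom_B(V,F^*\otimes_A W)$, given by $T\mapsto (\id_{F^*}\otimes T)\circ\eta_V$ with inverse $S\mapsto \epsilon_W\circ(\id_F\otimes S)$, is then formal, natural in $V$ and $W$, and exhibits $F\dashv E$; applying the whole argument to $\Phi^*$ gives $E\dashv F$, completing the proof.
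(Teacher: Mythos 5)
Your overall route is essentially the paper's: identify $E\cong F^*$ via Theorem~\ref{ladj_bimod_theorem}, build the counit directly from the completely bounded inner-product map (this is the paper's Lemma~\ref{ladj_ip_lemma}, and your reduction of its boundedness to condition (b) via the Haagerup tensor product is exactly the paper's argument), obtain the unit as a Hilbert-space adjoint of the other inner-product map, and use the $\Phi\leftrightarrow\Phi^*$ symmetry so that only one triangle identity needs checking. Up to that point your proposal and the paper agree step for step.

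The gap is in your final paragraph. You cannot verify the remaining triangle identity by ``evaluating on elementary tensors'' and ``extending by continuity,'' because the unit $\eta_V$ is defined only as an abstract Hilbert-space adjoint: $\eta_V(v)$ is not a finite sum of elementary tensors, and there is no algebraic formula for it to substitute into the zig-zag composition, so there is nothing to extend. Worse, an exact ``Parseval-type reconstruction identity'' for the inner products on $F$ and $F^*$ would amount to realizing the relevant identity operator inside the compacts, i.e.\ to the existence of a module-level unit in the sense of Proposition~\ref{unit_proposition} --- precisely what fails in the motivating examples (e.g.\ parabolic induction), and precisely why the theorem must pass to Hilbert space representations at all. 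What is needed instead is a usable description of the abstract adjoint: the paper proves (Lemma~\ref{lem-hilb-adjunction}) that $\eta_X(x)$ is the \emph{weak} limit of $u_\lambda\otimes x$, where $u_\lambda$ is an approximate unit of $\Compact_B(F,F)$ viewed inside $F\otimes_B F^*$, and the triangle identity then follows from the norm convergence $u_\lambda f\to f$ together with the identity $F(\epsilon_Y)(T\otimes f\otimes y)=Tf\otimes y$ for $T\in\Compact_B(F,F)$. This weak-limit, approximate-identity computation is the analytic heart of the proof, and your proposal has no substitute for it.
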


\begin{remark}
Under the equivalences explained in Section \ref{sec-indexes}, this theorem corresponds to \cite[Theorem 4.4(1)]{KPW}. Compare also \cite{CH_cb}, where an adjunction theorem is formulated in the context of operator modules.
\end{remark}

Before beginning the proof let us collect some preliminary facts.

\begin{lemma}\label{ladj_ip_lemma0}
Let $F$ be a correspondence from $A$ to $B$.   The formula    
\[
( f_1^* , f_2) \mapsto  \langle f_1,f_2 \rangle \]
defines a completely contractive   map of operator $B$-$B$-bimodules 
\[
 F^*\otimes_{hA} F \longrightarrow  B .
 \] 
\end{lemma}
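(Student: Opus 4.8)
The plan is to verify the claim directly from the universal property of the Haagerup tensor product recorded in Section~\ref{haagerup_section}. Writing $\Phi_0\colon F^*\times F\to B$ for the bilinear map $(f_1^*,f_2)\mapsto\langle f_1,f_2\rangle$, I must check three things: that $\Phi_0$ is balanced over $A$, that it intertwines the $B$-$B$-bimodule structures, and that its matrix amplifications are contractive. The first two are quick algebraic checks; the third is the analytic heart of the statement, and I would reduce it to the submultiplicativity of a $C^*$-norm.

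For the balancing, recall from Definitions~\ref{conjugate_definition1} and~\ref{conjugate_definition2} that $f_1^*\cdot a=(a^*f_1)^*$, so that $\Phi_0(f_1^*\cdot a,f_2)=\langle a^*f_1,f_2\rangle$. Since $A$ acts on $F$ through a $*$-homomorphism into $\Bounded_B(F,F)$, the operator $a^*$ has adjoint $a$, giving $\langle a^*f_1,f_2\rangle=\langle f_1,af_2\rangle=\Phi_0(f_1^*,af_2)$; thus $\Phi_0$ is $A$-balanced. The $B$-$B$-bimodule identities $\Phi_0(b\cdot f_1^*,f_2)=b\langle f_1,f_2\rangle$ and $\Phi_0(f_1^*,f_2 b)=\langle f_1,f_2\rangle b$ follow the same way from the left $B$-action $b\cdot f^*=(fb^*)^*$ on $F^*$ together with the inner-product identities $\langle xa,y\rangle=a^*\langle x,y\rangle$ and $\langle x,yb\rangle=\langle x,y\rangle b$.

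The key step is to identify $\Phi_0$ with composition of compact operators. Using Lemma~\ref{lem-compacts2} (with $A$ replaced by $B$) I regard $f_1^*\in F^*$ as the operator $S_{f_1}\colon g\mapsto\langle f_1,g\rangle$ in $\Compact_B(F,B)$, and using Lemma~\ref{lem-compacts} I regard $f_2\in F$ as the operator $T_{f_2}\colon b\mapsto f_2 b$ in $\Compact_B(B,F)$. Then $S_{f_1}\circ T_{f_2}\colon b\mapsto\langle f_1,f_2\rangle b$ is left multiplication by $\langle f_1,f_2\rangle$, so under the identification $\Compact_B(B,B)\cong B$ the map $\Phi_0$ becomes the composition pairing $\Compact_B(F,B)\times\Compact_B(B,F)\to\Compact_B(B,B)$. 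Crucially, Examples~\ref{operator_corner_example} and~\ref{compact_ci_example} show that these three spaces carry exactly the operator space structures at issue—the conjugate structure on $F^*$ from Definition~\ref{def-operator-space-conj}, and the Hilbert-module structures on $F$ and $B$—realised as corners of the $C^*$-algebra $C\coloneq\Compact_B(B\oplus F)$.

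The main obstacle, namely complete contractivity, now dissolves: inside $C$ the pairing $(S,T)\mapsto S\circ T$ is simply the multiplication of the $(1,2)$ and $(2,1)$ corners landing in the $(1,1)$ corner, and its $n$-th matrix amplification is the product map on $M_n(C)$. Submultiplicativity of the $C^*$-norm gives $\|ST\|_{M_n(C)}\le\|S\|_{M_n(C)}\|T\|_{M_n(C)}$, which is precisely condition~(b) of the Haagerup universal property; combined with the $A$-balancing this yields a complete contraction $F^*\otimes_{hA}F\to B$. I expect the only points requiring care to be the bookkeeping of the conjugate operator space structure on $F^*$ and the verification that the corner operator space structures coincide with those induced by the inner products, both of which are supplied by the cited examples.
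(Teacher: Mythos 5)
Your proof is correct, and it takes a genuinely different route from the paper's. The paper verifies the matrix contractivity head-on: it computes the $n$-th amplification of the pairing as the $M_n(B)$-valued inner product $\langle S^{\top},T\rangle$ on the Hilbert $M_n(B)$-module $M_n(F)$, applies the Cauchy--Schwarz inequality for Hilbert modules, and absorbs the transpose using the definition of the conjugate operator space structure; the balancing and bimodule properties are left implicit. You instead transport the whole pairing into the linking algebra $\Compact_B(B\oplus F)$ via the complete isometries $F^*\cong\Compact_B(F,B)$, $F\cong\Compact_B(B,F)$, $B\cong\Compact_B(B,B)$ of Examples~\ref{operator_corner_example} and~\ref{compact_ci_example}, after which the estimate is just submultiplicativity of the $C^*$-norm on $M_n(\Compact_B(B\oplus F))$. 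The two arguments are close cousins --- Cauchy--Schwarz for Hilbert modules is itself usually proved by a linking-algebra trick --- but they distribute the work differently: the paper's computation is self-contained and handles the transpose bookkeeping explicitly, while yours makes the contractivity structurally obvious at the cost of leaning on the cited complete-isometry statements (which is exactly where the transpose is hidden). A side benefit of your write-up is that you check the $A$-balancing and the $B$-$B$-bimodule identities explicitly, which the paper takes for granted; both are needed to invoke the universal property of Section~\ref{haagerup_section} and to justify the word ``bimodule'' in the statement.
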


\begin{proof}
If $\overline{S} \in M_n (F^*)$ (we shall avoid writing $S^*$ to avoid confusion with the matrix adjoint operation) and if  $T\in M_n(F)$, then the map 
\[
\langle \argument, \argument \rangle _n \colon M_n (F^*) \times  M_n (F) \longrightarrow M_n (B)
\]
induced from the inner product map in the statement of the lemma, as in Section~\ref{haagerup_section}, sends the pair $(\overline{S},T)$ to the Hilbert module inner product 
\[
\langle S^{\top}, T\rangle \in M_n (B),
\]
where $S\in M_n(F)$ is image of $\overline S\in M_n (F^*)$ under the conjugate linear isomorphism $M_n (F^*)\to M_n (F)$ and $S^\top$ is the transpose matrix.  So the lemma follows from  the universal property of the Haagerup tensor product (cf. Section \ref{haagerup_section}) and the Cauchy-Schwarz inequality \cite[p.297]{BlM}:
\[
\|\langle S^\top, T\rangle \|_{M_n(B)} \leq \|S^\top\|_{M_n(F)} \|T\|_{M_n(F)} = \|\overline{S}\|_{M_n(F^*)} \|T\|_{M_n(F)}. \qedhere 
\]  
\end{proof}

Suppose next we are given a local adjunction 
\[
 \Phi_{X,Y}:\Compact_B( X\otimes _A F , Y) \xrightarrow{\,\,\cong\,\,}  \Compact_A(X,  Y\otimes _B E) .
\]
Use it  to identify $E$ with $F^*$ as in the proof of Theorem~\ref{ladj_bimod_theorem}, and in this way equip $F^*$ with the structure of a Hilbert $B$-$A$-bimodule.

\begin{lemma}\label{ladj_ip_lemma}
Let $F$ be a correspondence from $A$ to $B$, and assume that $F^*$ has been equipped with a Hilbert $A$-module structure making $F^*$ and $F$ local adjoints.
The formulas   
\[
\epsilon \colon f_1^*\otimes f_2 \mapsto  \langle f_1,f_2 \rangle \qquad \text{and}\qquad \delta \colon f_1\otimes f_2^* \mapsto   \langle f_1^*, f_2^*\rangle
\]
define completely bounded bimodule maps  
 \[
\epsilon :  F^*\otimes_A F \to B \qquad \text{and} \qquad \delta:F\otimes_B F^* \to A.
 \] 
\end{lemma}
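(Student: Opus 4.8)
The plan is to establish the statement for $\epsilon$ in full and then deduce the statement for $\delta$ from the built-in symmetry of the local-adjoint relation. First I would check that $\epsilon$ is well defined on the algebraic tensor product and respects the module structures. Since $F$ is a correspondence from $A$ to $B$, the operator by which $a\in A$ acts on $F$ is adjointable with adjoint the operator of $a^*$, so $\langle a^*f_1,f_2\rangle_F=\langle f_1,af_2\rangle_F$. Using the right $A$-action $f^*\cdot a=(a^*f)^*$ on $F^*$ from Definition~\ref{conjugate_definition2}, this gives $\epsilon(f_1^*\cdot a\otimes f_2)=\langle a^*f_1,f_2\rangle_F=\langle f_1,af_2\rangle_F=\epsilon(f_1^*\otimes a\cdot f_2)$, so the pairing is $A$-balanced and descends to $F^*\otimes_A^{\alg}F$. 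The same inner-product identities ($\langle f_1b^*,f_2\rangle=b\langle f_1,f_2\rangle$ and $\langle f_1,f_2b\rangle=\langle f_1,f_2\rangle b$) show at once that $\epsilon$ is a $B$-$B$-bimodule map into $B$.

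The heart of the argument is complete boundedness, and this is exactly where the local-adjunction hypothesis enters. By Theorem~\ref{Haagerup_theorem} the internal tensor product $F^*\otimes_A F$ is completely isometrically the Haagerup tensor product $F^*\otimes_{hA}F$ computed with the \emph{Hilbert $A$-module} operator space structure on $F^*$. On the other hand, Lemma~\ref{ladj_ip_lemma0} furnishes a completely contractive map $F^*\otimes_{hA}F\to B$ given by the very same formula, but with $F^*$ carrying its \emph{conjugate} operator space structure. The two structures coincide as vector spaces, and by condition~(b) of Theorem~\ref{ladj_bimod_theorem} the identity map between them is a completely bounded isomorphism. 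I would therefore tensor this cb identity $F^*_{\mathrm{Hilb}}\to F^*_{\mathrm{conj}}$ with $\mathrm{id}_F$, invoke functoriality of the Haagerup tensor product (\cite[Lemma~3.4.5]{BlM}), and factor $\epsilon$ as the completely isometric isomorphism of Theorem~\ref{Haagerup_theorem}, followed by this cb change of operator space structure on $F^*$, followed by the completely contractive pairing of Lemma~\ref{ladj_ip_lemma0}. This exhibits $\epsilon$ as completely bounded, with $\|\epsilon\|_{\cb}$ controlled by the cb-norm of the identity $F^*_{\mathrm{Hilb}}\to F^*_{\mathrm{conj}}$. (As a cross-check, once mere boundedness is known, complete boundedness is also forced by Theorem~\ref{Paschke_theorem}, since $\epsilon$ is right $B$-linear.)

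For $\delta$ I would appeal to symmetry rather than repeat the computation. The local-adjoint relation is symmetric: by \eqref{eq-psi-from-phi2} the transpose $\Phi^*$ is a local adjunction exhibiting $F$ as a local adjoint of $F^*$, and $(F^*)^*\cong F$ canonically as correspondences and as operator spaces. The map $\delta\colon F\otimes_B F^*\to A$ is then precisely the map "$\epsilon$" constructed above, but for the correspondence $F^*$ from $B$ to $A$ (whose local adjoint is $F=(F^*)^*$), after the identification $(F^*)^*=F$. Condition~(b) is invariant under $F\leftrightarrow F^*$, because conjugation of operator spaces preserves complete bounded equivalence; hence the argument of the previous paragraph applies verbatim and yields that $\delta$ is a completely bounded $A$-$A$-bimodule map.

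The main obstacle is the complete boundedness of $\epsilon$, and specifically recognizing that it depends entirely on condition~(b). Algebraically $\epsilon$ is transparent, and on the conjugate operator space structure it is even a complete contraction by Lemma~\ref{ladj_ip_lemma0}; but the internal tensor product $F^*\otimes_A F$ sees the Hilbert-module structure on $F^*$, and the Hilbert-module and conjugate structures can differ \emph{unboundedly} (Example~\ref{Hilbert_cb_example}). Without the local-adjunction hypothesis $\epsilon$ need not even be bounded, so the crux is to insert the cb-equivalence of the two structures at exactly the right place, which functoriality of the Haagerup tensor product makes routine.
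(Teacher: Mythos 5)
Your proposal is correct and takes essentially the same route as the paper: the paper's own (two-sentence) proof invokes exactly the ingredients you use --- the completely contractive pairing of Lemma~\ref{ladj_ip_lemma0}, the identification of internal with Haagerup tensor products from Theorem~\ref{Haagerup_theorem}, and the cb-equivalence of the two operator space structures on $F^*$ from Theorem~\ref{ladj_bimod_theorem} --- with your proof simply making the factorization and the functoriality step explicit. Your treatment of $\delta$ by symmetry (swapping $F$ and $F^*$ and noting that condition~(b) is preserved under conjugation) is precisely what the paper's citation of Theorem~\ref{ladj_bimod_theorem} for $\delta$ amounts to.
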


\begin{proof}
The assertion about $\varepsilon$ follows from the previous lemma and Theorem~\ref{Haagerup_theorem}.  The assertion about $\delta$ follows from these results together with Theorem~\ref{ladj_bimod_theorem}.
\end{proof}

 The completely bounded map $\delta:F\otimes_B F^*\to A$ given by Lemma \ref{ladj_ip_lemma} induces, by Theorem \ref{Haagerup_theorem}, a natural bounded Hilbert space operator 
 \begin{equation}
 \label{eq-delta-transf}
 \delta_X:F\otimes_B F^* \otimes_A X\longrightarrow X
 \end{equation}
 for each $X\in {}_A\H$. Being a bounded operator between Hilbert \emph{spaces}, $\delta_X$ has an adjoint operator
 \begin{equation}
 \label{eq-eta-formula}
 \eta_X \colon X\longrightarrow F\otimes_B F^*\otimes_A X ,
  \end{equation}
  and we obtain a natural transformation  from the identity functor on ${}_A\H$ to the tensor product functor 
\[
F \circ F^*\colon X \mapsto F \otimes _B   F^* _A \otimes X.
\]

We need just such a natural transformation in order to prove that the tensor product  functor $F^*\colon {}_A \H \to {}_B \H$ is left adjoint to $F$ (namely the unit of the adjunction).  We also need a natural transformation  
 \[
  \epsilon_Y \colon  F^*\otimes_A F\otimes_B Y\longrightarrow Y
  \]
   for every $Y\in {}_B\H$ (this is the counit of the adjunction), and then we need to show that the compositions 
   \begin{equation}\label{rep_adjunction_equation}
  F\otimes_B Y \xrightarrow{\eta_{F(Y)}} F\otimes_B F^*\otimes_A F\otimes_B Y \xrightarrow{ F( \epsilon_Y)} F\otimes_B Y
\end{equation}
and 
   \begin{equation}\label{rep_adjunction_equation2}
  F^*\otimes_A X \xrightarrow{F^*(\eta_{X})} F^* \otimes _A  F\otimes_B F^*\otimes_A  X \xrightarrow{ \epsilon_{F^*(X)}} F^*\otimes_A X
\end{equation}
are the identity.   See   \cite[IV.1 Theorem 2]{MacLane}.

We shall define $\varepsilon _Y$  to be the map 
\[
    F^*\otimes_A F\otimes_B Y\xrightarrow{ \varepsilon \otimes \textrm{id}_Y} A\otimes _A Y \stackrel \cong \longrightarrow  Y
      \]
obtained from the completely bounded inner product map    $\varepsilon :F^*\otimes_A F \to B$   in Lemma~\ref{ladj_ip_lemma}.  Having done so, (\ref{rep_adjunction_equation}) and (\ref{rep_adjunction_equation2}) become effectively equivalent: just reverse the roles of $F$ and $F^*$ to get from one to the other.   So it remains to prove (\ref{rep_adjunction_equation}).
   
\begin{lemma}
\label{lem-hilb-adjunction}
Let $u_\lambda $ be an approximate unit for the $C^*$-algebra $\Compact_B (F,F)$, viewed as a net in $F\otimes _B F^*$ using the isomorphism of Theorem~\textup{\ref{tensor_compact_theorem}}.  The adjoint operator $\eta_X = \delta _X ^*$ in \textup{(\ref{eq-eta-formula})} is given by the formula 
\[
\eta_X (x) = \lim_{\lambda\to \infty} u_\lambda \otimes x,
\]
where the limit exists in the weak topology on the Hilbert space $F\otimes _B F^* \otimes _A X$.
\end{lemma}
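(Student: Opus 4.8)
The plan is to verify the stated formula by testing against a dense set of vectors and invoking a uniform norm bound, which is the standard way to identify a weak limit in a Hilbert space. Since $\eta_X = \delta_X^*$ by the definition in \eqref{eq-eta-formula}, the vector $\eta_X(x)$ is characterized by $\langle \eta_X(x), \zeta\rangle = \langle x, \delta_X(\zeta)\rangle_X$ for all $\zeta$ in the Hilbert space $F\otimes_B F^*\otimes_A X$. I would therefore aim to show that, for every such $\zeta$,
\[
\lim_\lambda \langle u_\lambda \otimes x, \zeta\rangle = \langle x, \delta_X(\zeta)\rangle_X ,
\]
and it suffices to check this on the dense subspace spanned by elementary tensors $\zeta = \omega\otimes x'$ (with $\omega\in F\otimes_B F^*$ and $x'\in X$), provided the net $u_\lambda\otimes x$ is uniformly bounded in norm.

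The algebraic heart of the argument is to identify the $A$-valued inner product on the Hilbert $A$-module $F\otimes_B F^*$ with the map $\delta$ of Lemma~\ref{ladj_ip_lemma}. Transporting the $C^*$-algebra structure of $\Compact_B(F,F)$ across the isomorphism $F\otimes_B F^*\cong \Compact_B(F,F)$ furnished by Theorems~\ref{Haagerup_theorem} and \ref{tensor_compact_theorem}, I would establish the formula
\[
\langle \omega_1, \omega_2\rangle_{F\otimes_B F^*} = \delta(\omega_1^*\omega_2),
\]
where $\omega^*$ denotes the adjoint in $\Compact_B(F,F)$. This is verified on rank-one tensors $\omega_i = f_i\otimes g_i^*$ by a direct computation using the internal-tensor-product inner product (Definition~\ref{def-itp}), the formula for $\delta$, and the compatibility of the left $B$-action on $F^*$ with its $A$-valued inner product; both sides reduce to $\langle \langle f_2,f_1\rangle\cdot g_1^*, g_2^*\rangle_{F^*}$, and the general case follows by linearity and continuity.

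Granting this formula, the rest is immediate. Taking $u_\lambda$ self-adjoint and writing $S$ for the operator in $\Compact_B(F,F)$ corresponding to $\omega$, I compute
\[
\langle u_\lambda \otimes x, \omega\otimes x'\rangle = \langle x, \langle u_\lambda,\omega\rangle_{F\otimes_B F^*}\, x'\rangle_X = \langle x, \delta(u_\lambda S)\, x'\rangle_X .
\]
Since $u_\lambda$ is an approximate unit one has $u_\lambda S\to S$ in the norm of $\Compact_B(F,F)$, and $\delta$ is bounded, so $\delta(u_\lambda S)\to \delta(S)$ in $A$; hence the right-hand side converges to $\langle x, \delta(\omega)\, x'\rangle_X$, which equals $\langle x, \delta_X(\omega\otimes x')\rangle_X$ because $\delta_X$ is induced from $\delta$ via Theorem~\ref{Haagerup_theorem}. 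For the uniform bound I would use $\|u_\lambda\otimes x\|^2 = \langle x, \delta(u_\lambda^2)\, x\rangle_X$ together with the positivity $\delta(u_\lambda^2) = \langle u_\lambda, u_\lambda\rangle\geq 0$ and $\|\delta(u_\lambda^2)\|\leq \|\delta\|\,\|u_\lambda\|^2\leq \|\delta\|$, yielding $\|u_\lambda\otimes x\|^2\leq \|\delta\|\,\|x\|^2$ independently of $\lambda$. A routine $\varepsilon/3$ argument then promotes convergence on the dense set of elementary tensors to weak convergence of $u_\lambda\otimes x$ to $\eta_X(x)$ on all of $F\otimes_B F^*\otimes_A X$.

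The main obstacle I anticipate is the bookkeeping in the inner-product formula $\langle \omega_1,\omega_2\rangle = \delta(\omega_1^*\omega_2)$: one must track the several module structures in play---the right Hilbert $B$-module structure on $F$, the conjugate right Hilbert $A$-module structure on $F^*$, and the interaction of the left $B$-action on $F^*$ with its $A$-valued inner product---and correctly match the involution of $\Compact_B(F,F)$ with the conjugation operations. Once this identification is secured, the convergence and the uniform bound are routine consequences of the approximate-unit property and of the boundedness and positivity of $\delta$.
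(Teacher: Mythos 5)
Your argument is correct, and its algebraic core --- an elementary-tensor computation tying the Hilbert-space inner product on $F\otimes_B F^*\otimes_A X$ to the map $\delta$, followed by the approximate-unit property --- is the same as the paper's; but the two proofs organize the limit differently. The paper proves the identity $\langle T\otimes x, v\rangle = \langle x, \delta_X T^* v\rangle$ for every $T\in\Compact_B(F,F)$ and \emph{every} $v\in F\otimes_B F^*\otimes_A X$ (checking it on elementary tensors suffices, since both sides are continuous in $v$), then takes $T=u_\lambda$ and uses that $u_\lambda^* v\to v$ in norm, i.e.\ nondegeneracy of the action of $\Compact_B(F,F)$ on the first tensor factor; no bound on the net $u_\lambda\otimes x$ is required. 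You instead isolate the Hilbert-module formula $\langle \omega_1,\omega_2\rangle_{F\otimes_B F^*}=\delta(\omega_1^*\omega_2)$ --- precisely the formula the paper records later as \eqref{compact_ip_equation} in the proof of Proposition~\ref{unit_proposition} --- obtain convergence only against the dense span of elementary tensors, and then need uniform boundedness of the net $u_\lambda\otimes x$ together with an $\varepsilon/3$ argument to conclude weak convergence. Both mechanisms are sound: the paper's extension-by-continuity trick makes your boundedness-plus-density step unnecessary and is marginally leaner, while your route has the side benefit of explicitly establishing the inner-product formula that the paper reuses later. One minor imprecision: in the estimate $\|\delta(u_\lambda^2)\|\le\|\delta\|\,\|u_\lambda\|^2$ you mix the operator norm on $\Compact_B(F,F)$ with the Hilbert-module norm on $F\otimes_B F^*$; these are equivalent via the completely bounded isomorphism \eqref{ladj_compact_equation} but not equal in general, so the constant changes, though the uniform bound you need survives intact.
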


\begin{proof}
We need to prove that 
\[
\lim _{\lambda \to \infty} \langle u_\lambda \otimes x , v\rangle = \langle x, \delta _X v \rangle
\]
for all $v\in F\otimes _B F^* \otimes _A X$.  To this end, let us show  that 
\begin{equation}
\label{eq-adjoint-T}
 \langle T \otimes x , v\rangle = \langle x, \delta _X  T^*v  \rangle ,
\end{equation}
where $T\in \Compact _B (F,F)$ and where on the right hand side of the identity the adjoint operator $T^*$ acts on the triple tensor product $F\otimes_B  F^* \otimes _A  X$ by acting on the first factor alone.   To prove (\ref{eq-adjoint-T}) it suffices to calculate with elementary tensors
\[
T = f_1  \otimes f_2 ^* \quad \text{and} \quad v = f_3 \otimes f_4 ^* \otimes x_1,
\]
and this straightforward using the formulas for  the Hilbert space inner products given in Definition~\ref{def-itp}.
\end{proof}
      
   \begin{proof}[Proof of   Theorem~\ref{rep-adjunction-theorem}]
 Let us  show that for every $Y\in {}_B \H$ the composition (\ref{rep_adjunction_equation})
is equal to the identity map. Let $f\in F$, $y\in Y$ and $z\in   F\otimes_B Y$.    It suffices to show that 
\begin{equation}
\label{eq-step-of hilb-thm0}
 \bigl\langle (F(\epsilon_Y) \eta_{F(Y)}  ( f\otimes y) , z \bigr \rangle  = \bigl\langle f\otimes y,z\bigr \rangle.
\end{equation}
If we write 
$
w = 
F(\epsilon_Y)^* z
$
then  (\ref{eq-step-of hilb-thm0}) becomes the identity 
\begin{equation}
\label{eq-step-of hilb-thm2}
 \bigl\langle  \eta_{F(Y)}  ( f\otimes y) , w \bigr \rangle  = \bigl\langle f\otimes y,z\bigr \rangle.
\end{equation}
Using the formula for $\eta_{F(Y)}$ proved in the lemma, the left hand side of (\ref{eq-step-of hilb-thm2}) is 
\begin{equation*}
%\label{eq-step-of hilb-thm2}
\lim_{\lambda \to \infty}  \bigl\langle  u_\lambda \otimes f\otimes y , w \bigr \rangle ,
\end{equation*}
or in other words 
\begin{equation}
\label{eq-step-of hilb-thm1}
\lim_{\lambda \to \infty}  \bigl\langle F(\epsilon_Y)(  u_\lambda \otimes f\otimes y) , z \bigr \rangle .
\end{equation}
 From the definition of $\epsilon_Y$ we have
\[  F(\epsilon_Y) \colon f_1\otimes f_2^*\otimes f_3\otimes y \mapsto  f_1\langle f_2,f_3\rangle \otimes y,
\] and this implies that for every $T\in \Compact_B(F,F)\cong F\otimes_B F^*$ one has 
\begin{equation*}
%\label{eq-epsilon-fmla}
F(\epsilon_Y)(T\otimes f\otimes y)=Tf\otimes y.
\end{equation*}
Applying this to (\ref{eq-step-of hilb-thm1}) we find that 
\[
 \bigl\langle (F(\epsilon_Y) \eta_{F(Y)}  ( f\otimes y) , z \bigr \rangle  
 = \lim_{\lambda \to \infty}   
   \bigl\langle   u_\lambda  f\otimes y  , z \bigr \rangle ,
\]
and (\ref{eq-step-of hilb-thm0}) follows from this. \end{proof}

The uniqueness of adjoint functors in the usual context of category theory implies:

\begin{corollary}
If $F$ admits local adjoints $E$ and $G$, then the tensor product functors 
\[E\otimes_A,\ G\otimes_A:{}_A\H\longrightarrow  {}_B\H\]
are canonically isomorphic.
\qed
\end{corollary}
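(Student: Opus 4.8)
The plan is to reduce the statement entirely to Theorem~\ref{rep-adjunction-theorem} together with the classical uniqueness of adjoint functors; there is no analysis left to do. First I would apply Theorem~\ref{rep-adjunction-theorem} to the local adjunction between $F$ and $E$ to conclude that, at the level of Hilbert space representations, the functor $E\otimes_A\colon {}_A\H\to{}_B\H$ is a two-sided adjoint of $F\otimes_B\colon{}_B\H\to{}_A\H$; in particular it is a \emph{left} adjoint of $F\otimes_B$, the unit $\eta^E_X\colon X\to F\otimes_B E\otimes_A X$ and counit $\epsilon^E_Y\colon E\otimes_A F\otimes_B Y\to Y$ being exactly the natural transformations produced in the proof of that theorem. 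Applying the same theorem to the local adjunction between $F$ and $G$ shows that $G\otimes_A$ is likewise a left adjoint of $F\otimes_B$, with its own unit $\eta^G$ and counit $\epsilon^G$. Thus $E\otimes_A$ and $G\otimes_A$ are two left adjoints of one and the same functor $F\otimes_B$.

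The second step is to invoke the elementary fact that any two left adjoints of a fixed functor are canonically naturally isomorphic \cite[IV.1]{MacLane}. Explicitly, the canonical isomorphism $E\otimes_A\xrightarrow{\cong}G\otimes_A$ is the composite
\[
E\otimes_A \xrightarrow{\ (E\otimes_A)\star\eta^G\ } E\otimes_A\,F\otimes_B\,G\otimes_A \xrightarrow{\ \epsilon^E\star(G\otimes_A)\ } G\otimes_A,
\]
whose inverse is obtained by interchanging the roles of $E$ and $G$. The naturality of this map and the fact that it is mutually inverse to its counterpart follow formally from the two triangle identities for each adjunction, exactly as in the classical argument. Because the units and counits are themselves uniquely determined by the adjunctions, this isomorphism involves no arbitrary choices and is therefore canonical.

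The only point that requires care is the bookkeeping of \emph{sides}: I must make sure that Theorem~\ref{rep-adjunction-theorem} is used in both cases to exhibit $E\otimes_A$ and $G\otimes_A$ as adjoints of $F\otimes_B$ on the same side (both left, say). Since the theorem delivers genuinely two-sided adjunctions this is automatic, but it is the one spot where a misalignment would break the appeal to the uniqueness statement. Once this is observed, the corollary is immediate, and no further estimates or operator-space arguments are needed.
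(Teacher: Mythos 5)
Your proposal is correct and follows essentially the same route as the paper, which simply invokes Theorem~\ref{rep-adjunction-theorem} together with the standard uniqueness of adjoint functors; you have merely spelled out the classical unit--counit construction of the canonical isomorphism that the paper leaves implicit. Your attention to the consistency of sides is a reasonable precaution but, as you note, is automatic since the adjunctions produced are two-sided.
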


\subsection{Existence of Units and Counits}
\label{sec-local-unit-counit}

We continue to work with locally adjoint Hilbert modules $F$ and $F^*$ as in Theorem \ref{ladj_bimod_theorem}, but let us return now from Hilbert space representations back to Hilbert modules.           
For each Hilbert $B$-module $Y$ we have a bounded, $B$-linear map
\[ \id_Y\otimes \epsilon: Y\otimes_B F^*\otimes_A F \to Y\otimes_B B\]
defined by identifying the Hilbert module tensor products with Haagerup tensor products, and using the functoriality of the latter with respect to completely bounded maps. A short computation using \eqref{Phi_equation} shows that for each Hilbert $A$-module $X$, and each $T\in \Compact_A(X, Y\otimes_B F^*)$, the diagram 
\[
\xymatrix@C=40pt{
 Y\otimes_B B\ar[d]_{\cong}   &  Y\otimes_B F^* \otimes_A F 
 \ar[l]_-{\id_Y \otimes \epsilon}
\\
Y  & X\otimes _B F  \ar[l]^-{\Phi_{X,Y}^{-1}(T)}  \ar[u]_{ T \otimes \id_F} .
 }
\]
is commutative: thus $\epsilon$ is almost a counit for the adjunction, its only defect being that it might not be adjointable. The action homomorphism $\eta:A\to \Bounded_B(F,F)$ may similary be considered a kind of generalised unit, a point of view justified by the following proposition.

\begin{proposition}\label{unit_proposition}
The following are equivalent:
\begin{enumerate}[\rm (a)]
  \item There exists a unit $A\to F\otimes_B F^*$ for the local adjunction.
  \item The natural isomorphism $\Phi_{X,Y}:\Compact_B(X\otimes_A F, Y)\to \Compact_A(X,Y\otimes_B F^*)$ extends to a natural transformation $\Bounded_B(X\otimes_A F, Y)\to \Bounded_A(X,Y\otimes_B F^*)$.
  \item The action of $A$ on $F$ is through a $*$-homomorphism from $A$ into $\Compact_B(F,F)$.
  \item The map $\delta:F\otimes_B F^*\to A$ of Lemma \ref{ladj_ip_lemma} is adjointable.
   \end{enumerate}
 If $A$ is unital, one may add a fifth equivalent condition:
 \begin{enumerate}[\rm (e)]
  \item $F$ is finitely generated as a right $B$-module.
 \end{enumerate}
When these conditions hold, the unit in \textup{(a)} is equal to $\delta^*$; the natural transformation in \textup{(b)} is $1$--$1$; and the $*$-homomorphism in \textup{(c)} corresponds to $\eta$ under the canonical identification of $\Compact_B(F,F)$ with $F\otimes_B F^*$.
\end{proposition}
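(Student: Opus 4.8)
The whole argument pivots on one computation. Using the identifications $\Compact_B(F,F)\cong F\otimes_B F^*$ from \eqref{ladj_compact_equation} and $\Compact_A(A,F\otimes_B F^*)\cong F\otimes_B F^*$ from Lemma~\ref{lem-compacts}, I claim that $\Phi_{A,F}$ is simply the identity map of $F\otimes_B F^*$. This drops out of the explicit formula \eqref{Phi_equation} evaluated on a rank-one operator $y\otimes(x\otimes f)^*$, taking $x=1_A$ when $A$ is unital and otherwise letting $x$ run through an approximate unit $u_\lambda$ (so that $y\otimes(u_\lambda\otimes g)^*\to y\otimes g^*$ and, on the other side, $(y\otimes g^*)\cdot u_\lambda\to y\otimes g^*$). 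Transporting the right $A$-action on $F\otimes_B F^*$ across to $\Compact_B(F,F)$ then shows it is $T\cdot a=T\circ\pi(a)$, where $\pi\colon A\to\Bounded_B(F,F)$ denotes the action. Every implication below is organized around this identity.

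I would first dispatch (a)$\Leftrightarrow$(b) by the standard unit/extension mechanism: given a unit $\eta$, the formula $T\mapsto(T\otimes\id_{F^*})\circ(\id_X\otimes\eta)$ makes sense for \emph{all} adjointable $T$ and defines the required extension to $\Bounded$, while conversely an extension evaluated at $\id_F\in\Bounded_B(F,F)$ yields $\eta=\Phi_{A,F}(\id_F)$, which is a unit by the Yoneda-type argument of \cite[IV.1]{MacLane} (cf.\ Proposition~\ref{adj_unit_counit_proposition}). For (a)$\Rightarrow$(c), I write the unit diagram with $X=A$, $Y=F$ and a compact $T$; since $\Phi_{A,F}=\id$ and $\pi(a'a'')=\pi(a')\pi(a'')$, this forces $T\circ\pi(a')\circ(\pi(a'')-\eta(a''))=0$ for all compact $T$ and all $a'$, and nondegeneracy of $\pi$ together with nondegeneracy of the $\Compact_B(F,F)$-action gives $\eta(a'')=\pi(a'')$, so $\pi(a'')\in\Compact_B(F,F)$, i.e.\ (c). For (c)$\Rightarrow$(a) I set $\eta:=\pi$, which under (c) takes values in $\Compact_B(F,F)\cong F\otimes_B F^*$, and verify that it is an adjointable $A$-bimodule map satisfying the unit diagram; adjointability (with $\eta^*=\delta$) comes from the next step, and the diagram is checked directly against \eqref{Phi_equation} by expanding $\pi(a)$ as a limit of finite-rank operators and using that $\pi(u_\lambda)\to\id_F$ strongly along an approximate unit of $A$.

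The heart of the matter is (c)$\Leftrightarrow$(d) and the identity $\eta=\delta^*$. Assuming (c), I compute the $A$-valued inner product $\langle f_1\otimes f_2^*,\pi(a)\rangle_{F\otimes_B F^*}$ by writing $\pi(a)=\lim\sum_i \theta_{g_i,h_i}$, unwinding the internal-tensor-product inner product of Definition~\ref{def-itp}, and recognizing that $\sum_i h_i\langle g_i,f_1\rangle_F\to\pi(a)^*f_1=\pi(a^*)f_1=a^*\!\cdot f_1$; this gives $\langle f_1\otimes f_2^*,\pi(a)\rangle=\langle f_2^*,f_1^*\rangle_{F^*}\,a=\langle\delta(f_1\otimes f_2^*),a\rangle_A$, so that $\eta=\pi$ is adjointable with adjoint $\delta$, yielding (d) and $\eta=\delta^*$. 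Reading the same pairing backwards, if $\delta$ is adjointable then the operator corresponding to $\delta^*(a)$ has adjoint $\pi(a^*)$, forcing $\pi(a)\in\Compact_B(F,F)$, which is (c). When $A$ is unital and $\pi$ nondegenerate one has $\pi(1_A)=\id_F$, so (c) holds iff $\id_F\in\Compact_B(F,F)$, which by \cite[Theorem 8.1.27]{BlM} is exactly finite generation (and projectivity) of $F$ over $B$, giving (c)$\Leftrightarrow$(e) as in Example~\ref{unital_example}. The three concluding assertions then follow: $\eta=\delta^*$ was just proved; the $*$-homomorphism of (c) is $\eta$ by the construction $\eta=\pi$; and the extension in (b) is injective because the completely bounded (possibly non-adjointable) counit $\epsilon\colon F^*\otimes_A F\to B$ of Lemma~\ref{ladj_ip_lemma} supplies a one-sided inverse, the triangle identity $(\id_F\otimes\epsilon)\circ(\eta\otimes\id_F)=\id_F$ being the module-level version of the computation in the proof of Theorem~\ref{rep-adjunction-theorem}.

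I expect the main obstacle to be the two inner-product computations underlying (c)$\Leftrightarrow$(d) and the unit-diagram verification in (c)$\Rightarrow$(a). This is where the $A$-valued inner product on $F^*$, which is only implicitly specified by the local adjunction, must be made to interact correctly with the left action $\pi$ and with the $B$-valued inner product on $F$, and where the non-unital case genuinely requires approximate units and strong-operator limits rather than a finite frame. Once $\Phi_{A,F}=\id$ and the pairing $\langle\,\cdot\,,\pi(a)\rangle=\langle\delta(\,\cdot\,),a\rangle$ are in place, the remaining implications are essentially bookkeeping.
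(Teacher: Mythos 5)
Your proof is correct, but it is organized quite differently from the paper's. The paper runs the single cycle (a)$\Rightarrow$(b)$\Rightarrow$(c)$\Rightarrow$(d)$\Rightarrow$(a): for (b)$\Rightarrow$(c) it first shows that \emph{any} natural extension of $\Phi$ to $\Bounded$ is automatically one-to-one (a short argument composing with $y^*\circ T$ and using naturality, which needs no unit), and then deduces compactness of the action operators from the fact that the injective map $\Phi_{A,F}$ already carries $\Compact_B(F,F)$ \emph{onto} $\Compact_A(A,F\otimes_B F^*)$; and for (d)$\Rightarrow$(a) it invokes the symmetry $\Phi\leftrightarrow\Phi^*$: an adjointable $\delta$ is a counit for $\Phi^*$ (by the discussion preceding the proposition, where $\epsilon$ and $\delta$ are shown to be ``almost counits''), hence $\delta^*$ is a unit for $\Phi$, with no direct verification of the unit diagram required. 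You instead prove (a)$\Leftrightarrow$(b) by the MacLane/Yoneda extraction $\eta=\Psi_{A,F}(\id_F)$ from the extension, then close the loop with (a)$\Rightarrow$(c), (c)$\Leftrightarrow$(d), and a hands-on (c)$\Rightarrow$(a) in which you set $\eta:=\pi$ and check the unit diagram against the explicit formula \eqref{Phi_equation}. Your key computations are sound: $\Phi_{A,F}$ is indeed the identity under the identifications of Lemma \ref{lem-compacts} and \eqref{ladj_compact_equation}; your pairing $\langle f_1\otimes f_2^*,\pi(a)\rangle=\langle\delta(f_1\otimes f_2^*),a\rangle_A$ is exactly the paper's formula $\langle K_1,K_2\rangle_{\Compact_B(F,F)}=\delta(K_1^*K_2)$ in unwound form; and your triangle-identity argument for injectivity of the extension is valid and legitimately weaker than the paper's, since the final assertions are claimed only \emph{when the conditions hold} (the paper, by contrast, must prove injectivity unconditionally because its (b)$\Rightarrow$(c) depends on it). What the paper's route buys is economy: it never verifies the unit diagram for $\pi$ directly, piggy-backing instead on the earlier counit discussion and on the unit/counit symmetry. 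What your route buys is explicitness: the identities $\Phi_{A,F}=\id$ and $\eta=\pi=\delta^*$, and a module-level triangle identity $(\id_F\otimes\epsilon)\circ(\eta\otimes\id_F)=\id_F$, all of which the paper leaves implicit; the price is that your (c)$\Rightarrow$(a) and (b)$\Rightarrow$(a) steps carry the burden of approximate-unit and naturality bookkeeping (e.g.\ $A$-bilinearity and adjointability of $\Psi_{A,F}(\id_F)$ via naturality against the element maps $A\to X$) that the paper's symmetry argument avoids entirely.
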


\begin{proof}
That (a) implies (b) is clear. To prove (b) implies (c), let us first show that $\Phi_{X,Y}$ is one-to-one on $\Bounded_B(X\otimes_A F, Y)$. Fix a nonzero $T\in \Bounded_B(X\otimes_A F, Y)$, and choose $y\in Y$ such that $T^* y\neq 0$. Then the compact operator $(T^* y)^* = y^*\circ T\in \Compact_B(X\otimes_A F,B)$ is nonzero, so its image under $\Phi_{X,B}$ is nonzero. By naturality we have
\[
 \Phi_{X,B}(y^*\circ T) = (y^*\otimes 1_{F^*})\circ \Phi_{X,Y}(T),
\]
and so $\Phi_{X,Y}(T)\neq 0$. 

Now let $a$ be an element of $A= \Compact_A(A,A)$. By naturality, $\Phi_{A,F}(a\otimes 1_F)= a \circ \Phi_{A,F}(1_F)\in \Compact_A(A,F\otimes_B F^*)$. Since $\Phi_{A,F}$ is $1$--$1$, and maps $\Compact_B(F,F)$ surjectively onto $\Compact_A(A, F\otimes_B F^*)$, we conclude that $a\otimes 1_F$ is compact: i.e., $A$ acts on $F$ by compact operators.

Next, we show that (c) implies (d). Identifying $F\otimes_B F^*$ with $\Compact_B(F,F)$ as in \eqref{ladj_compact_equation}, one finds the following formula for the $A$-valued inner product:
\begin{equation}\label{compact_ip_equation}
\langle K_1, K_2 \rangle_{\Compact_B(F,F)} = \delta(K_1^*K_2).
\end{equation}
Letting $\eta:A\to \Compact_B(F,F)$ be the action homomorphism, we find that
\[ \langle \eta(a), K \rangle_{\Compact_B(F,F)} = \delta(\eta(a^*)K) = a^*\delta(K)= \langle a, \delta(K)\rangle_A,\]
where we have used that $\delta$ is a map of $A$-bimodules. Thus $\delta$ is adjointable, with adjoint $\eta$.

Finally (d) implies (a), as follows.  If $\delta$ is adjointable, then Lemma \ref{ladj_ip_lemma} implies that $\delta$ is a counit for the local adjunction $\Phi^*$, and so $\delta^*$ is a unit for $\Phi$.

If $A$ is unital, then the equivalence of (c) and (e) follows from the well-known fact (\cite[8.1.27]{BlM}) that the identity operator on a Hilbert $B$-module $F$ is compact if and only if $F$ is finitely generated.
\end{proof}

\begin{corollary}
\label{decomposition_corollary}
Let $F$ be a correspondence from $A$ to $B$, admitting a local adjoint $F^*$ and a unit $\eta:A\to F\otimes_B F^*$. The $C^*$-algebra $A$ decomposes as the direct sum of two-sided ideals
\[ A= A_{F^*}\oplus A_{F^*}^\perp\]
where $A_{F^*}= \overline{\lspan} \{ \langle f_1^*, f_2^*\rangle\ |\ f_1^*,f_2^*\in F^*\}$ and $A_{F^*}^{\perp} = \ker (A\to \Bounded_B(F,F))$.
\end{corollary}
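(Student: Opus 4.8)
The plan is to exhibit $A_{F^*}$ and $A_{F^*}^{\perp}$ as complementary closed two-sided ideals, the substantive step being that together they span $A$.

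First I would record the basic structure. The summand $A_{F^*}=\overline{\lspan}\{\langle f_1^*,f_2^*\rangle\}$ is nothing but the range ideal of the Hilbert $A$-module $F^*$, hence a closed two-sided ideal; and $A_{F^*}^{\perp}=\ker(A\to\Bounded_B(F,F))$ is the kernel of a $*$-homomorphism, hence also a closed two-sided ideal. Write $\eta\colon A\to\Compact_B(F,F)$ for the action homomorphism, which lands in the compact operators by Proposition~\ref{unit_proposition}(c), since a unit is assumed to exist; corestricting the codomain from $\Bounded_B(F,F)$ to $\Compact_B(F,F)$ does not alter the kernel, so $\ker\eta=A_{F^*}^{\perp}$. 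Because both pieces are two-sided ideals, it suffices to prove $A_{F^*}\cap A_{F^*}^{\perp}=0$ together with $A_{F^*}+A_{F^*}^{\perp}=A$: trivial intersection of two-sided ideals forces $A_{F^*}A_{F^*}^{\perp}\subseteq A_{F^*}\cap A_{F^*}^{\perp}=0$, so the resulting sum is automatically orthogonal and direct.

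For the intersection, suppose $a\in A_{F^*}\cap\ker\eta$. Since $\ker\eta$ is self-adjoint, both $a$ and $a^*$ act as $0$ on $F$, so the conjugate right action of Definition~\ref{conjugate_definition2} gives $f^*\cdot a=(a^*f)^*=0$ for every $f\in F$. Hence $F^*\cdot a=0$, and therefore $\langle f_1^*,f_2^*\rangle\,a=\langle f_1^*,f_2^*\cdot a\rangle=0$, i.e. $A_{F^*}\,a=0$. As $a^*\in A_{F^*}$, this yields $a^*a=0$ and so $a=0$; this part uses no hypothesis on units.

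The main obstacle is the spanning statement $A_{F^*}+A_{F^*}^{\perp}=A$, and the key observation is that $\eta$ carries an approximate unit of $A_{F^*}$ to an approximate unit of $\Compact_B(F,F)$. Let $(e_\mu)$ be a self-adjoint approximate unit for the $C^*$-algebra $A_{F^*}$. Since $A_{F^*}$ is the range ideal of $F^*$, one has $f^*\cdot e_\mu\to f^*$ for every $f^*\in F^*$, which by the conjugate action means $e_\mu f\to f$ in $F$ for every $f\in F$. Consequently, for a rank-one operator $f\otimes g^*\in\Compact_B(F,F)$ one computes $\eta(e_\mu)(f\otimes g^*)=(e_\mu f)\otimes g^*\to f\otimes g^*$, so $\eta(e_\mu)$ acts as an approximate unit on the rank-one operators and, by density together with the uniform bound $\|\eta(e_\mu)\|\le 1$, on all of $\Compact_B(F,F)$. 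Then for any $a\in A$ we have $e_\mu a\in A_{F^*}$ and $\eta(e_\mu a)=\eta(e_\mu)\eta(a)\to\eta(a)$; since $\eta(A_{F^*})$ is the image of a $C^*$-algebra under a $*$-homomorphism, hence closed, we conclude $\eta(a)\in\eta(A_{F^*})$. Thus $\eta(A)=\eta(A_{F^*})$, so every $a\in A$ agrees with an element of $A_{F^*}$ modulo $\ker\eta=A_{F^*}^{\perp}$, which gives $A=A_{F^*}+A_{F^*}^{\perp}$ and completes the decomposition.
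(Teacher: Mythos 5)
Your proof is correct, but it takes a genuinely different route from the paper's. The paper's argument is essentially two lines: by Proposition~\ref{unit_proposition}, the existence of the unit makes the map $\delta\colon F\otimes_B F^*\to A$, $f_1\otimes f_2^*\mapsto \langle f_1^*,f_2^*\rangle$, adjointable with $\delta^*=\eta$; since $\eta$ is a $*$-homomorphism it has closed range, so the closed-range theorem for Hilbert modules \cite[Theorem 3.2]{Lance} yields $A=\image\delta\oplus\ker\eta$ as Hilbert $A$-modules, and the left $A$-linearity of these maps upgrades this to a decomposition into two-sided ideals. You instead invoke only part (c) of Proposition~\ref{unit_proposition} (the action of $A$ on $F$ is through $\Compact_B(F,F)$) and argue directly at the level of $C^*$-algebras: trivial intersection via $F^*\cdot a=0$, and spanning by pushing an approximate unit of $A_{F^*}$ through $\eta$ to obtain an approximate unit of $\Compact_B(F,F)$, then using that images of $*$-homomorphisms are closed. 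Your version is more elementary---it avoids Lance's structural theorem and never needs the adjointability of $\delta$---at the cost of being longer; the paper's version gets the complementation in one stroke from a standard theorem.

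One step in your argument needs to be made explicit. You pass from $f^*\cdot e_\mu\to f^*$ to $e_\mu f\to f$ in $F$ ``by the conjugate action.'' The algebraic identity $f^*\cdot e_\mu=(e_\mu f)^*$ alone does not suffice: the first convergence takes place in the norm on $F^*$ induced by the $A$-valued inner product (that is where the standard approximate-unit fact for range ideals lives), whereas the second is convergence in the norm of $F$, i.e.\ in the conjugate Banach space norm on $F^*$. These two norms on $F^*$ are in general distinct; their equivalence is precisely condition (b) of Theorem~\ref{ladj_bimod_theorem}, which is available here because $F^*$ is assumed to be a local adjoint of $F$. With that citation inserted, the step---and with it the whole proof---is complete.
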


\begin{proof}
By Proposition \ref{unit_proposition}, the existence of a unit $\eta$ implies that $\delta$ is adjointable, with $\delta^*=\eta$. Being a homomorphism of $C^*$-algebras, $\eta$ has closed range and so \cite[Theorem 3.2]{Lance} gives
\[
 A = \image \delta \oplus \ker \eta = A_{F^*}\oplus A_{F^*}^\perp
\]
as (right) Hilbert $A$-modules. Both $\delta$ and $\epsilon$ are left $A$-linear, so the above is a decomposition of $C^*$-algebras.
\end{proof}

Recalling the relationship between units and counits, Proposition \ref{unit_proposition} and Corollary \ref{decomposition_corollary} immediately give:

\begin{proposition}\label{counit_proposition}
The following are equivalent:
\begin{enumerate}[\rm (a)]
  \item There exists an \textup{(}adjointable\textup{)} counit $F^*\otimes_A F\to B$.
  \item The natural isomorphism $\Phi_{X,Y}^{-1}:\Compact_A(X, Y\otimes_B F^*)\to \Compact_B(X \otimes_A F, Y)$ extends to a natural transformation $\Bounded_A(X,Y \otimes_B F^*) \to \Bounded_B(X \otimes_A F, Y)$.
  \item The action of $B$ on $F^*$ is through a $*$-homomorphism from $B$ into the $C^*$-algebra $\Compact_A(F^*,F^*)$.
  \item The map $\epsilon:F^*\otimes_A F\to B$ of Lemma \ref{ladj_ip_lemma} is adjointable.
  \end{enumerate}
  If $B$ is unital, one may add a fifth equivalent condition:
 \begin{enumerate}[\rm (e)]
  \item $F$ is finitely generated as a left $A$-module.
 \end{enumerate}

 When these conditions hold, the counit in \textup{(a)} is equal to $\epsilon$; natural transformation in \textup{(b)} is $1$--$1$; the $*$-homomorphism in \textup{(c)} corresponds to $\epsilon^*$ under the canonical identification of $\Compact_A(F^*,F^*)$ with $F^*\otimes_A F$; and the $C^*$-algebra $B$ decomposes as the direct sum of two-sided ideals $B = B_F \oplus B_F^{\perp}$.
 \qed 
 \end{proposition}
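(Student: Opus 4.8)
The plan is to obtain the entire statement as the mirror image of Proposition~\ref{unit_proposition} and Corollary~\ref{decomposition_corollary}, read off the transposed local adjunction. Recall from \eqref{eq-psi-from-phi2} that $\Phi^*_{X,Y}(T)=(\Phi_{X,Y})^{-1}(T^*)^*$ defines a local adjunction between the correspondence $F^*$, regarded as a correspondence from $B$ to $A$, and its local adjoint $(F^*)^*=F$, a correspondence from $A$ to $B$. Thus Proposition~\ref{unit_proposition}, which was proved for an arbitrary locally adjoint pair, applies to $\Phi^*$ after the substitution $A\leftrightarrow B$, $F\leftrightarrow F^*$. The whole proof then reduces to checking that under this dictionary the five conditions of Proposition~\ref{unit_proposition} for $\Phi^*$ become verbatim the conditions (a)--(e) recorded here, and that its concluding identifications transcribe into the final sentence of the present statement.

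I would verify the translation condition by condition. For (a), a unit for $\Phi^*$ is an adjointable bimodule map $B\to F^*\otimes_A F$; by the remark following Definition~\ref{def-unit-counit2} — that $\varepsilon^*$ is a unit for $\Phi^*$ exactly when $\varepsilon$ is a counit for $\Phi$ — the existence of such a unit is equivalent, via adjoints, to the existence of an adjointable counit $\epsilon\colon F^*\otimes_A F\to B$ for $\Phi$. For (b), the transposition formula shows that $\Phi^*$ extends to the bounded operators precisely when $\Phi^{-1}$ does, since taking Hilbert-module adjoints is a bijection of the relevant bounded-operator spaces. For (c), the left-action homomorphism attached to $\Phi^*$ is the action of $B$ on $F^*$, and the compacts into which it is asked to land are $\Compact_A(F^*,F^*)$. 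For (d), the map denoted $\delta$ in Proposition~\ref{unit_proposition} is, for $\Phi^*$, the inner-product map $F^*\otimes_A F\to B$ of Lemma~\ref{ladj_ip_lemma}, namely $\epsilon$ itself; so ``$\delta$ adjointable'' becomes exactly ``$\epsilon$ adjointable.''

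For the unital clause (e), Proposition~\ref{unit_proposition} gives, when $B$ is unital, that $F^*$ is finitely generated as a right $A$-module; under the conjugate-linear isomorphism $F\to F^*$ of Definition~\ref{conjugate_definition1}, which by Definition~\ref{conjugate_definition2} exchanges the left $A$-module structure on $F$ with the right $A$-module structure on $F^*$ (the involution of $A$ being a bijection, finite generation is unaffected), this is equivalent to $F$ being finitely generated as a left $A$-module. The closing identifications follow the same way: the final clause of Proposition~\ref{unit_proposition} for $\Phi^*$ yields that the counit is $\epsilon$, that the natural transformation in (b) is $1$--$1$, and that the $*$-homomorphism in (c) corresponds to $\epsilon^*$ under $\Compact_A(F^*,F^*)\cong F^*\otimes_A F$, while Corollary~\ref{decomposition_corollary}, read for $\Phi^*$, supplies the decomposition $B=B_F\oplus B_F^\perp$ with $B_F=\overline{\lspan}\{\langle f_1,f_2\rangle\mid f_1,f_2\in F\}$ and $B_F^\perp=\ker(B\to \Bounded_A(F^*,F^*))$.

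No step is hard in isolation; the only point demanding care is keeping the dictionary consistent — in particular confirming that a unit for $\Phi^*$ is genuinely the adjoint of a counit for $\Phi$, and that the $\delta$-map of Proposition~\ref{unit_proposition} specialises for $\Phi^*$ to the map $\epsilon$ of Lemma~\ref{ladj_ip_lemma} and not to $\delta$. Once those two identifications are pinned down, every remaining equivalence transcribes mechanically, which is why the statement can be asserted as an immediate consequence of the preceding results.
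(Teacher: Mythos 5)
Your proposal is correct and is exactly the paper's argument: the paper states Proposition~\ref{counit_proposition} with no separate proof, deriving it ``immediately'' from Proposition~\ref{unit_proposition} and Corollary~\ref{decomposition_corollary} via the unit/counit symmetry under the transposition $\Phi\leftrightarrow\Phi^*$ of \eqref{eq-psi-from-phi2}. Your condition-by-condition verification of the dictionary (in particular that the $\delta$-map for $\Phi^*$ is $\epsilon$, and that a unit for $\Phi^*$ is the adjoint of a counit for $\Phi$) simply makes explicit what the paper leaves to the reader.
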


\begin{corollary}\label{adj_iff_ladj_corollary}
A local adjunction extends to an adjunction if and only if it admits both a unit and a counit.  
\end{corollary}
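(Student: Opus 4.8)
The plan is to read off the forward implication directly from Propositions~\ref{unit_proposition} and~\ref{counit_proposition}, and to concentrate the real work on the converse, where the issue is to glue the two one-sided extensions produced by a unit and a counit into a single natural isomorphism of the \emph{full} operator spaces $\Bounded$. For the forward direction, suppose the local adjunction $\Phi$ extends to an adjunction, i.e.\ there is a natural isomorphism
\[
\widehat\Phi_{X,Y}\colon \Bounded_B(X\otimes_A F,Y)\xrightarrow{\ \cong\ }\Bounded_A(X,Y\otimes_B F^*)
\]
whose restriction to compact operators is $\Phi$. Then $\widehat\Phi$ is in particular a natural transformation extending $\Phi$, so condition (b) of Proposition~\ref{unit_proposition} holds and $\Phi$ admits a unit; applying the same observation to $\widehat\Phi^{-1}$, which extends $\Phi^{-1}$, condition (b) of Proposition~\ref{counit_proposition} holds and $\Phi$ admits a counit. (Equivalently, one may quote Proposition~\ref{adj_unit_counit_proposition}: the unit and counit of $\widehat\Phi$ are, by Definition~\ref{def-unit-counit2}, a unit and a counit for $\Phi$, since the defining diagrams restrict from adjointable to compact operators.)

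For the converse, suppose $\Phi$ admits both a unit $\eta$ and a counit $\epsilon$. By Proposition~\ref{unit_proposition} the unit $\eta$ determines a natural transformation $\Psi\colon \Bounded_B(X\otimes_A F,Y)\to \Bounded_A(X,Y\otimes_B F^*)$ agreeing with $\Phi$ on compact operators, and by Proposition~\ref{counit_proposition} the counit $\epsilon$ determines a natural transformation $\Theta$ in the opposite direction agreeing with $\Phi^{-1}$ on compact operators. It suffices to prove that $\Psi$ and $\Theta$ are mutually inverse, for then $\Psi$ is the desired adjunction restricting to $\Phi$. Because $\Psi$ carries compact operators to compact operators (there it equals $\Phi$), the composite $\Theta\circ\Psi$ restricts on $\Compact_B(X\otimes_A F,Y)$ to $\Phi^{-1}\circ\Phi=\id$, and symmetrically $\Psi\circ\Theta$ restricts to the identity on compacts.

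The crux is to promote these identities from compact to all adjointable operators, and I would do this with the Yoneda lemma. For fixed $X$, set $Z=X\otimes_A F$; then $Y\mapsto \Bounded_B(Z,Y)$ is the covariant hom-functor of $Z$ on $\H_B$, and $\Theta\circ\Psi$ is a natural endomorphism of it. By Yoneda it is precomposition with the single operator $g_X=(\Theta\circ\Psi)(\id_Z)\in\Bounded_B(Z,Z)$, so that $(\Theta\circ\Psi)(T)=T\circ g_X$ for every adjointable $T$. Evaluating the identity $K\circ g_X=K$, valid for all compact $K$, on rank-one operators $K=y\otimes z^*$ gives $g_X^*z=z$ for all $z$, hence $g_X=\id_Z$ and $\Theta\circ\Psi=\id$. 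The symmetric argument, using the contravariant hom-functor $X\mapsto \Bounded_A(X,Y\otimes_B F^*)$ for fixed $Y$, identifies $\Psi\circ\Theta$ with postcomposition by a fixed operator that the compact case again forces to be the identity. Thus $\Psi$ is a natural isomorphism of the full operator spaces restricting to $\Phi$, i.e.\ the local adjunction extends to an adjunction.

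I expect the only genuine obstacle to be exactly this last step. The restrictions of $\Theta\circ\Psi$ and $\Psi\circ\Theta$ to compacts are the identity essentially by construction, but compact operators are not dense in $\Bounded$, so the information must be transported by naturality rather than by continuity; the Yoneda identification of each composite with a single pre- or post-composition operator is what accomplishes this. A more hands-on alternative would be to unwind the defining diagrams of $\Psi$ and $\Theta$ into the formula $(\Theta\circ\Psi)(T)=T\circ(\id_X\otimes G)$, where $G=(\id_F\otimes\epsilon)\circ(\eta\otimes\id_F)\colon F\to F$ is the triangle (zig-zag) composite, and then to deduce $G=\id_F$ from the compact case; but the Yoneda formulation keeps the bookkeeping to a minimum.
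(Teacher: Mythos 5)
Your proof is correct, and its overall skeleton is the same as the paper's: the forward direction is read off from Proposition~\ref{adj_unit_counit_proposition} (or condition (b) of Propositions~\ref{unit_proposition} and~\ref{counit_proposition}), and the converse combines Propositions~\ref{unit_proposition} and~\ref{counit_proposition} to produce two natural transformations on the $\Bounded$-spaces that are mutually inverse on compacts. Where you genuinely diverge is in the crux step of promoting $\Theta\circ\Psi=\id$ and $\Psi\circ\Theta=\id$ from compact to all adjointable operators. The paper tests against the compact operators $y^*\colon Y\to B$ of Lemma~\ref{lem-compacts2}: naturality in $Y$ gives $y^*\circ\Psi_{X,Y}\Phi_{X,Y}(T)=\Psi_{X,B}\Phi_{X,B}(y^*\circ T)=y^*\circ T$, and since the maps $y^*$ separate the points of $Y$ this forces $\Psi\Phi(T)=T$ (with a mirror-image computation for the other composite). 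You instead invoke the Yoneda lemma to collapse each composite to pre- or post-composition with a single operator ($g_X$, resp.\ $h_Y$), which the rank-one compacts then force to be the identity. The two mechanisms are close cousins---both exploit naturality plus the fact that compacts are plentiful enough to detect adjointable operators---but yours concentrates all the bookkeeping into one categorical step and makes transparent exactly what is needed (enough rank-one operators to pin down an endomorphism of $X\otimes_A F$ and of $Y\otimes_B F^*$), at the cost of running two separate covariant/contravariant Yoneda arguments; the paper's computation is more elementary and uses the same kind of test maps for both composites. One detail you should make explicit: to pass from $(y\otimes z^*)\circ g_X=y\otimes z^*$ to $g_X^*z=z$, fix a convenient test module, say $Y=B$, so that $y\langle g_X^*z-z,w\rangle=0$ for all $y\in B$ and all $w$ yields $\langle g_X^*z-z,w\rangle=0$ and hence $g_X^*=\id$; this is a two-line patch, not a gap.
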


\begin{proof} 
As we observed above (Proposition \ref{adj_unit_counit_proposition}), the ``only if'' direction is a standard fact about adjoint functors. For the converse, assume that a local adjunction
\[ \Phi_{X,Y}:\Compact_B(X\otimes_A F, Y) \to \Compact_A(X, Y\otimes_B F^*)\]
admits a unit and a counit. Proposition \ref{unit_proposition} gives an extension of $\Phi$ to a $1$--$1$ natural transformation $\Phi_{X,Y}:\Bounded_B(X\otimes_A F,Y)\to \Bounded_A(X, Y\otimes_B F^*)$, while Proposition \ref{counit_proposition} gives a $1$--$1$ natural transformation $\Psi_{X,Y}:\Bounded_A(X,Y\otimes F^*)\to \Bounded_B(X\otimes_A F,Y)$ such that $\Phi$ and $\Psi$ are mutually inverse on the subspaces of compact operators. For each $T\in \Bounded_B(X\otimes_A F,Y)$ and $y\in Y$ we have 
\[ y^*\circ \Psi_{X,Y}\Phi_{X,Y}(T) = \Psi_{X,B}\Phi_{X,B}(y^*\circ T) = y^* \circ T\]
in $\Compact_B(X\otimes_A F, B)$. Since the maps $y^*:Y\to B$ separate the points of $Y$, the above calculation implies that $\Psi_{X,Y}\circ \Phi_{X,Y}=\id$. A similar computation shows that $\Phi_{X,Y}\circ \Psi_{X,Y}=\id$, and so $\Phi$ is a natural isomorphism.
\end{proof}

Combining the above results, we recover Theorem~\ref{adj_bimod_theorem}.

\subsection{Uniqueness of Local Adjoints}

Local adjoints are unique up to completely bounded bimodule isomorphism, by Lemma \ref{ladj_uniqueness_lemma}. They are not, in general, unique as correspondences: see Section~\ref{sec-miscellany}. However, the stronger form of uniqueness does hold in the presence of a counit:

\begin{proposition}
\label{counit_uniqueness_proposition}
Let $F$ be a correspondence from $A$ to $B$ admitting a local adjoint $E$ and an \textup{(}adjointable\textup{)} counit 
\[
\epsilon: E\otimes_A F\longrightarrow B.
\]
 If $G$ is a second local adjoint to $F$, then $E$ and $G$ are   isomorphic as correspondences from $B$ to $A$.
\end{proposition}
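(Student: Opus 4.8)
The plan is to deduce everything from Lemma~\ref{ladj_uniqueness_lemma}, which already supplies a canonical completely bounded $B$-$A$-bimodule isomorphism $\theta\colon E\to G$ with completely bounded inverse. Since a bimodule bijection that is \emph{adjointable} and has bounded inverse is automatically an isomorphism of correspondences --- its polar decomposition $\theta=U|\theta|$, with $|\theta|=(\theta^*\theta)^{1/2}$ an invertible adjointable bimodule endomorphism of $E$, produces a unitary bimodule map $U\colon E\to G$ --- it suffices to prove that $\theta$ is adjointable. So I would fix $\eta\in E$ and $\gamma\in G$ and aim to produce $\theta^*\gamma\in E$ satisfying $\langle\theta\eta,\gamma\rangle_G=\langle\eta,\theta^*\gamma\rangle_E$, using the counit as the one extra ingredient beyond mere norm equivalence.

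The counit enters through Proposition~\ref{counit_proposition}: its existence means that $B$ acts on $E$ through a $*$-homomorphism $\pi_E\colon B\to\Compact_A(E,E)$ by \emph{compact} operators. For $b\in B$ I would expand $\pi_E(b)=\lim_n K_n$ with $K_n=\sum_i \xi_i^{(n)}\otimes(\zeta_i^{(n)})^*$ a finite sum of rank-one operators, the limit taken in operator norm, and define a candidate $S_b\colon G\to E$ by
\[
S_b\gamma=\lim_n\sum_i \zeta_i^{(n)}\,\langle\theta\xi_i^{(n)},\gamma\rangle_G .
\]
Moving the scalar $\langle\zeta_i,\eta\rangle_E$ across the inner product and using that $\theta$ is right $A$-linear yields the key identity $\langle\theta(\pi_E(b)\eta),\gamma\rangle_G=\langle\eta,S_b\gamma\rangle_E$. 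I expect the genuine work of the proof to lie precisely in justifying the limit defining $S_b$ and the boundedness of $S_b$; this is the main obstacle, and it is settled by the Cauchy estimate
\[
\|(S^{(n)}-S^{(m)})\gamma\|_E=\sup_{\|\eta\|_E\le1}\bigl\|\langle\theta((K_n-K_m)\eta),\gamma\rangle_G\bigr\|_A\le \|\theta\|\;\|K_n-K_m\|\;\|\gamma\|_G,
\]
which combines the norm equivalence coming from $\theta$ being a bounded isomorphism, the Cauchy--Schwarz inequality in $G$, and the standard identity $\|m\|_E=\sup_{\|\eta\|_E\le1}\|\langle\eta,m\rangle_E\|_A$. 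The same estimate bounds $S_b$, and well-definedness is then automatic because the left-hand side $\langle\theta(\pi_E(b)\eta),\gamma\rangle_G$ does not depend on the chosen rank-one approximation while $\langle\,\cdot\,,\,\cdot\,\rangle_E$ is definite.

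To finish I would exploit that $G$ is a \emph{nondegenerate} correspondence from $B$ to $A$: the left action $\pi_G$ is by adjointable operators with $\pi_G(c)^*=\pi_G(c^*)$, and by the Cohen factorization theorem every $\gamma\in G$ can be written $\gamma=\pi_G(c)\gamma'$. For such $\gamma$,
\[
\langle\theta\eta,\gamma\rangle_G=\langle\theta\eta,\pi_G(c)\gamma'\rangle_G=\langle\pi_G(c^*)\theta\eta,\gamma'\rangle_G=\langle\theta(\pi_E(c^*)\eta),\gamma'\rangle_G=\langle\eta,S_{c^*}\gamma'\rangle_E,
\]
where the third equality is the $B$-linearity of $\theta$ and the last is the identity of the previous paragraph with $b=c^*$. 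Hence $\theta^*\gamma:=S_{c^*}\gamma'$ represents the adjoint; it is independent of the factorization by definiteness of $\langle\,\cdot\,,\,\cdot\,\rangle_E$, and it is bounded since $\|\theta^*\gamma\|_E=\sup_{\|\eta\|_E\le1}\|\langle\theta\eta,\gamma\rangle_G\|_A\le\|\theta\|\,\|\gamma\|_G$. Thus $\theta$ is adjointable, and the reduction in the first paragraph completes the argument. It is worth emphasizing that this route never requires $G$ to possess a counit of its own: compactness of the $B$-action is used only on the side of $E$, where the counit is available, while on the side of $G$ only the adjointability and nondegeneracy built into the definition of a correspondence are needed.
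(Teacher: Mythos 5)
Your proposal is correct and follows essentially the same route as the paper: Lemma~\ref{ladj_uniqueness_lemma} supplies the completely bounded bimodule isomorphism, Proposition~\ref{counit_proposition} converts the counit into compactness of the $B$-action on $E$, and adjointability of the isomorphism is deduced from rank-one approximation of that compact action together with Cohen factorization. Your two middle paragraphs are in effect an inline reproof of the paper's Lemmas~\ref{lemma-rank-one-adjoint} and~\ref{lemma-cb-adjoint}, differing only cosmetically (you build the adjoint vector $\theta^*\gamma$ directly via a Cauchy estimate and use factorization once over $B$, whereas the paper passes through a compact operator $E\to A$ and factors twice, over $B$ and over $A$).
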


The proposition  is an easy consequence of the following two lemmas.

\begin{lemma}
\label{lemma-rank-one-adjoint}
Let $E$ and $G$ be Hilbert $A$-modules and let $T\colon E\to G$ be a bounded, $A$-linear operator \textup{(}not necessarily adjointable\textup{)}.  If $K$ is any compact \textup{(}and hence adjointable\textup{)} operator on the Hilbert module $E$, and if $g\in G$, then the operator 
\begin{gather*}
 S_{K,g} \colon E \longrightarrow A \\
S_{K,g}\colon   e \longmapsto \langle g, TKe \rangle_G
 \end{gather*}
 is   adjointable    \textup{(}and indeed compact\textup{)}.
\end{lemma}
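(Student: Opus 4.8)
The plan is to reduce the general compact $K$ to the case of a rank-one operator by a density argument, and in that rank-one case to compute $S_{K,g}$ explicitly and recognise it as a compact operator via Lemma~\ref{lem-compacts2}. The idea that makes the non-adjointability of $T$ harmless is that the $A$-linearity of $T$ lets us pull the ``scalar'' $\langle e_2, e\rangle$ out past $T$, so that $T$ is only ever evaluated on a single fixed vector, producing a fixed element of $A$ and thereby avoiding any appeal to $T^*$.

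First I would observe that the map $\beta\colon e\mapsto \langle g, Te\rangle$ is a bounded, $A$-linear map $E\to A$: boundedness is immediate from $\|\langle g, Te\rangle\|\le \|g\|\,\|T\|\,\|e\|$, and $A$-linearity follows from $T(ea)=(Te)a$ together with $\langle g, ha\rangle = \langle g,h\rangle a$. Since $S_{K,g}=\beta\circ K$, the lemma reduces to the assertion that the right-composite $\beta\circ K$ of a bounded $A$-linear map $\beta\colon E\to A$ with a compact operator $K$ on $E$ is compact. I expect this to be the one delicate point: it is \emph{not} a direct consequence of the principle that the composite of a compact operator with an adjointable one is compact, precisely because $\beta$ (equivalently $T$) need not be adjointable.

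Next, for a rank-one operator $K=e_1\otimes e_2^*$ in the notation of \eqref{eq-rank-one1}, I would compute, using the $A$-linearity of $\beta$,
\[
\beta K(e)=\beta\bigl(e_1\langle e_2,e\rangle\bigr)=\beta(e_1)\,\langle e_2,e\rangle = a_0\,\langle e_2,e\rangle,\qquad a_0\coloneq\beta(e_1)\in A,
\]
and then rewrite $a_0\langle e_2,e\rangle=\langle e_2 a_0^*, e\rangle$ using the identity $\langle xa,y\rangle = a^*\langle x,y\rangle$. By Lemma~\ref{lem-compacts2} the resulting map $e\mapsto \langle e_2 a_0^*, e\rangle$ is exactly the compact (hence adjointable) operator in $\Compact_A(E,A)$ attached to $(e_2 a_0^*)^*\in E^*$. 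By linearity of $K\mapsto \beta\circ K$, it follows that $\beta\circ K$ is compact for every finite-rank $K$.

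Finally I would pass to an arbitrary compact $K$ by approximation. The assignment $K\mapsto \beta\circ K$ is norm-continuous, since $\|\beta K-\beta K'\|\le \|\beta\|\,\|K-K'\|$; finite-rank operators are norm-dense in $\Compact_A(E)$ by the very definition of the compact operators; and $\Compact_A(E,A)$ is norm-closed in $\Bounded_A(E,A)$. Hence $S_{K,g}=\beta\circ K$ is a norm-limit of compact operators, and is therefore itself compact and in particular adjointable, which completes the argument.
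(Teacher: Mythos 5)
Your proof is correct and follows essentially the same route as the paper: reduce to a rank-one $K=e_1\otimes e_2^*$ by norm-density of finite-rank operators, then compute $S_{K,g}(e)=\langle g,Te_1\rangle\langle e_2,e\rangle$ and recognise it as a compact operator from $E$ to $A$. The paper writes this operator directly as $\langle g,Te_1\rangle_G\otimes e_2^*\in\Compact_A(E,A)$, whereas you identify it via Lemma~\ref{lem-compacts2}; these are the same observation.
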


\begin{proof}
Since the compact operators are a norm-closed subspace of all operators, it suffices to prove the lemma in the special case where 
$K = e_1 \otimes e_2 ^*$.  In this case the operator in the lemma is the  compact operator
\[
S_{K,g} =  \langle g, Te_1\rangle_G \otimes e_2^* \colon E\longrightarrow A,
\]
and so the proof is complete.
\end{proof}

\begin{lemma}
\label{lemma-cb-adjoint}
Let $E$ and $G$ be correspondences from $B$ to $A$.  If the action of $B$ on $E$ is through compact operators,  then every bounded,  $A$-$B$-linear operator $T\colon E\to G$ is adjointable. 
\end{lemma}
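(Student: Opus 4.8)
The plan is to produce the adjoint $T^*$ one vector at a time, by showing that for each $g \in G$ the $A$-linear functional
\[
\phi_g \colon E \longrightarrow A, \qquad \phi_g(e) = \langle g, Te\rangle_G,
\]
lies in $\Compact_A(E,A)$. Once this is known, Lemma~\ref{lem-compacts2} identifies $\phi_g$ with a unique element $T^*g \in E$ satisfying $\langle g, Te\rangle_G = \langle T^*g, e\rangle_E$ for all $e$; taking adjoints in $A$ gives $\langle Te, g\rangle_G = \langle e, T^*g\rangle_E$, so that $T$ is adjointable with adjoint $g \mapsto T^*g$. Well-definedness and additivity of $T^*$ follow at once from the uniqueness clause of Lemma~\ref{lem-compacts2}.

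The key step is to establish compactness of $\phi_g$ first for a dense set of $g$, by reducing to the preceding lemma. For $b \in B$, write $K_b \in \Bounded_A(E,E)$ for the operator $e \mapsto b \cdot e$; by hypothesis $K_b$ is \emph{compact}. Since $T$ is left $B$-linear and the $B$-action on $G$ is a $*$-homomorphism into $\Bounded_A(G,G)$, we compute
\[
\langle g, T(K_b e)\rangle_G = \langle g, b\cdot Te\rangle_G = \langle b^* \cdot g, Te\rangle_G = \phi_{b^*g}(e).
\]
Thus $\phi_{b^*g} = S_{K_b, g}$ in the notation of Lemma~\ref{lemma-rank-one-adjoint}, and that lemma shows $\phi_{b^*g} \in \Compact_A(E,A)$ for every $b \in B$ and every $g \in G$.

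To pass from this dense family to all of $G$, observe two things. First, the assignment $g \mapsto \phi_g$ is continuous: the estimate $\|\phi_g(e)\|_A = \|\langle g, Te\rangle_G\|_A \le \|g\|\,\|T\|\,\|e\|$ gives $\|\phi_g\| \le \|T\|\,\|g\|$, and since $\phi_g$ depends conjugate-linearly on $g$, the map $g \mapsto \phi_g$ into $\Bounded_A(E,A)$ is continuous. Second, because the $B$-action on $G$ is nondegenerate and $b \mapsto b^*$ is a bijection of $B$, the vectors $b^*g$ span a dense subspace of $G$. As $\Compact_A(E,A)$ is norm-closed in $\Bounded_A(E,A)$, continuity of $g \mapsto \phi_g$ together with compactness of $\phi_g$ on a dense subspace forces $\phi_g \in \Compact_A(E,A)$ for every $g \in G$, which completes the argument.

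The one idea that makes the proof work — and the only point requiring any ingenuity — is recognizing that the compactness of the $B$-action lets one absorb a left multiplication by $b$ into the operator $K$ of Lemma~\ref{lemma-rank-one-adjoint}; the $B$-linearity of $T$ then converts the question for $\phi_{b^*g}$ into an instance already handled, while nondegeneracy of the $B$-action on $G$ supplies exactly the density needed to conclude. No genuinely hard analysis remains once this reduction is in place.
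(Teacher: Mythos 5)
Your proof is correct, and it shares the paper's central idea---using the $B$-linearity of $T$ to move $b^*$ across the inner product and thereby absorb the $B$-action into the compact operator $K$ of Lemma~\ref{lemma-rank-one-adjoint}---but it globalizes that idea differently. The paper invokes the Cohen--Hewitt factorization theorem twice to write an arbitrary $g\in G$ \emph{exactly} as $g=b_1g_1a_1$, and then produces the adjoint vector in one stroke as $f=Sa_1$, where $S\colon A\to E$ is the adjoint of the compact operator furnished by Lemma~\ref{lemma-rank-one-adjoint}; no limiting argument is needed, but a nontrivial factorization theorem is. You instead use only the soft consequence of nondegeneracy that the vectors $b^*g$ span a dense subspace of $G$, and conclude by combining the Lipschitz estimate $\|\phi_g\|\le\|T\|\,\|g\|$ with the norm-closedness of $\Compact_A(E,A)$; the one point to state carefully is that this closure argument takes place in the Banach space of all bounded $A$-linear maps $E\to A$ (a priori $\phi_g$ is not known to lie in $\Bounded_A(E,A)$), but since adjointable operators are norm-closed among bounded ones and compacts are closed among adjointables, this is harmless. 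Your use of Lemma~\ref{lem-compacts2} to convert the compact functional $\phi_g$ directly into the vector $T^*g$ is also slightly tidier than the paper's route, and it explains why the paper needs the extra factor $a_1\in A$: the paper works with the adjoint operator $S\colon A\to E$ rather than with the identification $\Compact_A(E,A)\cong E^*$, so it must evaluate $S$ at a specific element of $A$ supplied by the factorization. In short, the paper trades approximation for factorization and you trade factorization for approximation; both arguments rest on the same key lemma and the same reduction.
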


\begin{proof}
We need to show that for every $g\in G$ there is some $f\in E$ such that 
\[
\langle g, Te\rangle_G = \langle f, e\rangle_E
\]
for every $e\in E$. By the factorization theorem (applied twice), the element $g$ may be written as a product $b_1g_1a_1$ with $b_1\in B$ and $g_1\in G$, and $a_1\in A$. Next,  if $e\in E$, then
\[
 \langle b_1g_1, Te\rangle_G = \langle g_1, b_1^* Te\rangle_G = \langle g_1, Tb_1^* e\rangle_G ,
\]
where in the last step we are using the $B$-linearity of $T$.  Since $b_1^*$ is acting on $E$ as a compact operator, it follows from the previous lemma that we can write 
\[
 a^*\langle g_1, Tb_1^* e\rangle_G  = \langle  Sa ,e\rangle_E
 \]
 for some operator $S\colon A \to E$ and every $a\in A$.  Hence if we set $f = Sa_1$, then 
 \[
\langle g, T e\rangle_G =
  \langle b_1g_1a_1, T e\rangle_G  
  =  a_1^*\langle g_1, Tb_1^* e\rangle_G
  =\langle f, e\rangle_E,
  \]
  as required.
\end{proof}

\begin{proof}[Proof of Proposition~\ref{counit_uniqueness_proposition}]
This is an immediate consequence of the previous lemma, together with Proposition~\ref{counit_proposition}, which tells us that $B$ acts on $E$ through compact operators, and Lemma~\ref{ladj_uniqueness_lemma}, which tells us that $E$ and $G$ are isomorphic as $B$-$A$-operator bimodules.
\end{proof}

\begin{corollary}
If $F$ admits an adjoint $E$, then $E$ is the unique local adjoint to $F$.\qed
\end{corollary}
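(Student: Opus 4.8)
The plan is to deduce the statement directly from Proposition~\ref{counit_uniqueness_proposition}, after checking that its hypotheses are satisfied. The content of that proposition is that a local adjoint equipped with an \emph{adjointable} counit is unique as a correspondence, so the entire task reduces to extracting such a counit from the given full adjunction.

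First I would observe that, by Corollary~\ref{adj_implies_ladj_corollary}, the full adjunction between $F$ and $E$ restricts to a local adjunction between the same pair of tensor product functors; in particular $E$ is a local adjoint to $F$ in the sense of Definition~\ref{ladj_definition}. Second, by Proposition~\ref{adj_unit_counit_proposition} the full adjunction possesses a counit $\epsilon\colon E\otimes_A F\to B$, which by Definition~\ref{def-unit-counit}(b) is a bounded, adjointable $B$-bimodule map. Because the local adjunction is nothing but the restriction of the full adjunction to the subspaces of compact operators, the very same $\epsilon$ serves as a counit for the local adjunction in the sense of Definition~\ref{def-unit-counit2}(b).

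With a local adjoint $E$ and an adjointable counit $\epsilon$ in hand, I would then apply Proposition~\ref{counit_uniqueness_proposition} verbatim: if $G$ is any second local adjoint to $F$, then $E$ and $G$ are isomorphic as correspondences from $B$ to $A$. This is exactly the assertion that $E$ is the unique local adjoint to $F$, up to isomorphism of correspondences.

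I do not anticipate a genuine obstacle here, since the statement is a formal corollary of the preceding proposition; the real work has already been carried out in Lemmas~\ref{lemma-rank-one-adjoint} and~\ref{lemma-cb-adjoint} and in Proposition~\ref{counit_uniqueness_proposition} itself. The one point deserving a moment's care is verifying that the counit furnished by the full adjunction is adjointable and continues to serve as a counit once the adjunction is restricted to compact operators---but both of these facts are immediate from the definitions recalled above.
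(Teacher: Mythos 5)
Your proof is correct and is precisely the argument the paper leaves implicit: restrict the adjunction to a local adjunction via Corollary~\ref{adj_implies_ladj_corollary}, note that its counit (Proposition~\ref{adj_unit_counit_proposition}) is adjointable and remains a counit for the restricted local adjunction, and then invoke Proposition~\ref{counit_uniqueness_proposition}. Nothing is missing; this matches the paper's intended deduction exactly.
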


\subsection{Left and Right Indexes}
\label{sec-indexes}

The purpose of this section is to explicate the relationship between our results and those of \cite{KPW}. Throughout this section, $F$ denotes a correspondence from $A$ to $B$, and we assume that $F^*$ has been equipped with an $A$-valued inner product making it into a correspondence from $B$ to $A$.

Following \cite[Definitions 2.8 and 2.9]{KPW}, we say that $F$ is of \emph{finite numerical index} if there are positive constants $l$ and $r$ such that for all $f_1,\ldots,f_n\in F$,
\[ \left\| \sum \langle f_i, f_i \rangle \right\|_B \leq l \left\| \sum f_i^*\otimes f_i \right\|_{\Compact_A(F^*,F^*)}\]
and
\[ \left\| \sum \langle f_i^*, f_i^*\rangle \right\|_A \leq r \left\| \sum f_i\otimes f_i^* \right\|_{\Compact_B(F,F)}.\]
The smallest $l$ and $r$ for which these inequalities hold are called, respectively, the \emph{left numerical index} and the \emph{right numerical index} of $F$.

\begin{lemma}
The correspondence $F$ is of finite numerical index if and only if $F^*$ is a local adjoint to $F$.
\end{lemma}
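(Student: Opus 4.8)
The plan is to read off the lemma from the characterization of local adjoints in Theorem~\ref{ladj_bimod_theorem} together with an explicit operator-space translation of the two numerical-index inequalities. Since condition~(a) of Theorem~\ref{ladj_bimod_theorem} is assumed throughout this section, $F^*$ is a local adjoint to $F$ if and only if condition~(b) holds: the two operator space structures on the vector space $F^*$ are completely boundedly equivalent. These two structures are the \emph{conjugate} structure inherited from the Hilbert $B$-module $F$ (I will call it the \emph{row} structure, since by Definition~\ref{def-operator-space-conj} its matrix norms are the transposed matrix norms of $F$) and the \emph{column} structure coming from the chosen $A$-valued inner product. So the whole lemma reduces to showing that finite numerical index is equivalent to both identity maps $\mathrm{id}\colon F^*_{\mathrm{row}}\to F^*_{\mathrm{col}}$ and $\mathrm{id}\colon F^*_{\mathrm{col}}\to F^*_{\mathrm{row}}$ being completely bounded.

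Next I would translate each of the four quantities in the index inequalities into a row- or column-amplification norm of the single tuple $(f_1^*,\dots,f_n^*)$. By Theorem~\ref{tensor_compact_theorem} and Example~\ref{operator_corner_example}, for a Hilbert module the squared norm of a column $(x_1,\dots,x_n)$ equals $\|\sum\langle x_i,x_i\rangle\|$, while the squared norm of a row equals $\|\sum x_i\otimes x_i^*\|$. Reading this off in $F$, in $F^*_{\mathrm{col}}$, and---via the transpose defining Definition~\ref{def-operator-space-conj}---in $F^*_{\mathrm{row}}$, the left-index inequality becomes exactly the statement that the row norm of $(f_1^*,\dots,f_n^*)$ in $F^*_{\mathrm{row}}$ is at most $\sqrt{l}$ times its row norm in $F^*_{\mathrm{col}}$, and the right-index inequality becomes the statement that the column norm in $F^*_{\mathrm{col}}$ is at most $\sqrt{r}$ times the column norm in $F^*_{\mathrm{row}}$. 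In other words, finite left index says $\mathrm{id}\colon F^*_{\mathrm{col}}\to F^*_{\mathrm{row}}$ is bounded on single rows, and finite right index says $\mathrm{id}\colon F^*_{\mathrm{row}}\to F^*_{\mathrm{col}}$ is bounded on single columns.

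The heart of the matter, and the step I expect to be the main obstacle, is to upgrade these one-row and one-column bounds to genuine complete boundedness, since for general operator spaces matrix norms are not controlled by single rows or columns. The point is that $F^*_{\mathrm{col}}$ is a Hilbert-module (column) operator space and $F^*_{\mathrm{row}}$ is a conjugate Hilbert-module (row) operator space, and for such spaces the relevant amplifications do suffice. Concretely, for a Hilbert $A$-module $Y$ and any linear map $u\colon V\to Y$, I would show $\|u\|_{\cb}=\sup_n\|C_n(u)\|$, where $C_n(u)\colon C_n(V)\to C_n(Y)$ is the column amplification: realizing $M_n(Y)$ isometrically as operators $C_n(A)\to C_n(Y)$ as in Example~\ref{operator_corner_example}, the element $\beta=[u(v_{ij})]$ of $M_n(Y)$ sends a column $\xi\in C_n(A)$ to $\beta\xi=C_n(u)\bigl([v_{ij}]\xi\bigr)$, whence $\|\beta\|\le\|C_n(u)\|\,\|[v_{ij}]\|_{M_n(V)}$ and therefore $\|u\|_{\cb}\le\sup_n\|C_n(u)\|\le\|u\|_{\cb}$. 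The transpose of this argument, via the conjugate operator space symmetry of Definition~\ref{def-operator-space-conj}, gives the analogous statement that a map into the row space $F^*_{\mathrm{row}}$ is completely bounded with $\cb$-norm equal to the supremum of its row-amplification norms.

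Putting these together finishes both directions. If $F$ has finite numerical index, then by the previous paragraph $\mathrm{id}\colon F^*_{\mathrm{col}}\to F^*_{\mathrm{row}}$ is completely bounded with $\|\cdot\|_{\cb}^2=l$ and $\mathrm{id}\colon F^*_{\mathrm{row}}\to F^*_{\mathrm{col}}$ is completely bounded with $\|\cdot\|_{\cb}^2=r$, so the two operator space structures are completely boundedly equivalent and Theorem~\ref{ladj_bimod_theorem} makes $F^*$ a local adjoint to $F$. Conversely, if $F^*$ is a local adjoint then condition~(b) of Theorem~\ref{ladj_bimod_theorem} gives $\cb$-bounds on the two identity maps, and since the row- and column-amplification norms are dominated by the $\cb$-norm, the two index inequalities hold with $l,r\le\|\mathrm{id}\|_{\cb}^2$; hence $F$ has finite numerical index.
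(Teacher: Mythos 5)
Your proof is correct in outline and, at its crucial step, takes a genuinely different route from the paper's. Both arguments reduce the lemma to condition (b) of Theorem~\ref{ladj_bimod_theorem}, and your translation of the two index inequalities into column/row statements about tuples $(f_1^*,\dots,f_n^*)$ is accurate. For the direction ``local adjoint $\Rightarrow$ finite numerical index'' the paper invokes the completely bounded maps $\epsilon$ and $\delta$ of Lemma~\ref{ladj_ip_lemma}, whose norms are exactly the two indexes, whereas you simply note that the index quantities are particular matrix norms and hence dominated by the $\cb$-norms of the two identity maps; both work. The substantive difference is in the converse: the paper uses the index inequalities only for $n=1$, which gives mere boundedness of the identity maps between the two structures on $F^*$, and then upgrades to complete boundedness by citing \cite[Corollary 2.10]{KPW}, which transfers the numerical indexes of $F$ to the matrix correspondences $M_n(F)$ over $M_n(A)$ and $M_n(B)$. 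You instead use the inequalities at full strength (all $n$) and do the upgrade yourself, by showing that for suitable maps into a Hilbert-module (column-type) operator space, respectively into a conjugate (row-type) space, the $\cb$-norm is computed on column, respectively row, amplifications. Your route is self-contained where the paper's leans on an external result.

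The one genuine defect is that your auxiliary lemma is stated in false generality. For an \emph{arbitrary} linear map $u\colon V\to Y$ into a Hilbert $A$-module it is \emph{not} true that $\|u\|_{\cb}=\sup_n\|C_n(u)\|$. Indeed, let $H$ be an infinite-dimensional Hilbert space, let $V=\overline{H}$ with its column structure, let $Y=H^*$ (the same vector space $\overline{H}$) with its row structure, and let $u$ be the identity map. By Example~\ref{compact_ci_example}, $Y$ is completely isometric to $\Compact_{\C}(H,\C)$, which is a right Hilbert $\Compact(H)$-module (inner product $\langle S,T\rangle=S^*T$, action by composition), so $Y$ is a genuine Hilbert $C^*$-module operator space. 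Every column amplification of $u$ is contractive, since the column norm of $(\overline{h}_1,\dots,\overline{h}_n)$ in $Y$ is $\|\sum_i h_i\otimes h_i^*\|^{1/2}\leq\bigl(\sum_i\|h_i\|^2\bigr)^{1/2}$, the latter being its column norm in $V$; yet $u$ is not completely bounded, because its conjugate is the row-to-column identity on $H$, which has $\cb$-norm $(\dim H)^{1/2}$ by Example~\ref{Hilbert_cb_example}, and conjugation preserves $\cb$-norms. What saves you is that your sketch tacitly uses more than linearity: the expression $[v_{ij}]\xi$ presupposes a (completely contractive) right $A$-module action on $V$, and the identity $\beta\xi=C_n(u)\bigl([v_{ij}]\xi\bigr)$ presupposes that $u$ is $A$-linear. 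With these hypotheses added, the lemma is true by exactly your argument, and they hold in your application: both operator space structures on $F^*$ carry the same right $A$-action, completely contractive in each case (for $F^*_{\mathrm{col}}=E$ because it is a Hilbert $A$-module, for $F^*_{\mathrm{row}}$ because it is the conjugate of the completely contractive left action of $A$ on $F$), and the identity maps are $A$-linear; for the row-target map one applies the column statement to the conjugate map $\mathrm{id}\colon E^*\to F$, which is $B$-linear from a right operator $B$-module into the Hilbert $B$-module $F$. In the counterexample above no such common module structure exists, which is exactly why it escapes. So restate the lemma with the module hypotheses and your proof is complete.
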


\begin{proof}
If $F^*$ is a local adjoint to $F$ then Lemma \ref{ladj_ip_lemma}, combined with the identifications $\Compact_B(F,F)\cong F\otimes_B F^*$ and $\Compact_A(F^*,F^*)\cong F^*\otimes_A F$, implies that the map 
\[ \epsilon:\Compact_A(F^*,F^*)\to B,\qquad f_1^*\otimes f_2 \mapsto \langle f_1,f_2\rangle\]
is bounded, and this ensures that $F$ has finite left numerical index (equal to $\|\epsilon\|$). Reversing the roles of $F$ and $F^*$ shows that $F$ also has finite right numerical index, equal to the norm of the map
\[ \delta:\Compact_B(F,F)\to A,\qquad f_1\otimes f_2^* \mapsto \langle f_1^*,f_2^*\rangle.\]

To prove the converse, denote by  $E$  the bimodule $F^*$ equipped with the operator space structure coming from the $A$-valued inner product; $F^*$ will denote the same bimodule with its conjugate operator space structure. If $F$ has finite right numerical index $r$, then for $f^*\in F^*$ one has
\[ \| f^*\|_E = \| \langle f^*, f^*\rangle \|_A^{1/2}\leq r \| f\otimes f^*\|_{\Compact_B(F,F)}^{1/2}= r \|\langle f,f\rangle\|_B^{1/2} = r \|f^*\|_{F^*},\]
showing that the identity map $E\to F^*$ has norm at most $r$. Similarly, if $F$ has finite left numerical index $l$, then the identity map $F^*\to E$ has norm at most $l$. Two successive applications of \cite[Corollary 2.10]{KPW} show that, if $F$ has left numerical index $l$ and right numerical index $r$, then so does $M_n(F)$ (viewed as a correspondence from $M_n(A)$ to $M_n(B)$). The above argument then shows that the identity map $M_n(E)\to M_n(F^*)$ has norm at most $r$, and its inverse has norm at most $l$. Therefore the identity $E\to F^*$ is completely bounded, and so $E$ is a local adjoint to $F$ by Theorem \ref{ladj_bimod_theorem}.
\end{proof}

Suppose $F$ has finite numerical index. The maps $\epsilon$ and $\delta$ may be extended to normal maps
\[ \epsilon'':\Compact_A(F^*,F^*)'' \to B''\qquad\text{and}\qquad \delta'':\Compact_B(F,F)''\to A''\]
between the enveloping von Neumann algebras. Following \cite[Definition 2.17]{KPW}, we say that $F$ has \emph{finite left index} if the image $\epsilon''(1)$ of the identity lies in the multiplier algebra of $B$. The condition of having \emph{finite right index} is defined analogously, in terms of the map $\delta''$. Say $F$ has \emph{finite index} if it has finite left index and finite right index.

\begin{lemma}
Suppose $F$ has finite numerical index, so that $F^*$ is a local adjoint to $F$. 
\begin{enumerate}[\rm (a)]
 \item $F$ has finite left index if and only if there is a counit $F^*\otimes_A F\to B$ for the local adjunction.
 \item $F$ has finite right index if and only if there is a unit $A\to F\otimes_B F^*$ for the local adjunction.
 \item $F$ has finite index if and only if the local adjunction is an adjunction.
\end{enumerate}
\end{lemma}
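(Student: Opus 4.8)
The plan is to deduce (c) from (a) and (b), to obtain (b) from (a) by symmetry, and to concentrate the real work in part (a). Indeed, by definition $F$ has finite index precisely when it has both finite left and finite right index, while by Corollary~\ref{adj_iff_ladj_corollary} the local adjunction is an adjunction precisely when it admits both a unit and a counit; so (c) follows formally once (a) and (b) are known. Part (b) is then part (a) applied to the transposed local adjunction $\Phi^*$ of \eqref{eq-psi-from-phi2}, which interchanges $A$ with $B$, the correspondence $F$ with $F^*$, the maps $\epsilon$ and $\delta$, and---by the symmetry of Definition~\ref{def-unit-counit2}---units with counits; under this dictionary finite left index is carried to finite right index. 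Thus it suffices to prove (a).

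Let $\rho\colon B\to\Bounded_A(F^*,F^*)$ denote the action of $B$ on $F^*$. By Proposition~\ref{counit_proposition} a counit exists if and only if $\rho(B)\subseteq\Compact_A(F^*,F^*)$, so (a) amounts to the equivalence
\[ \epsilon''(1)\in M(B)\quad\Longleftrightarrow\quad \rho(B)\subseteq\Compact_A(F^*,F^*). \]
For the implication $\Leftarrow$ I would fix an increasing approximate unit $u_\lambda$ of $\Compact_A(F^*,F^*)$ and note that if $\rho(b)$ is compact then $\rho(b)u_\lambda\to\rho(b)$ in operator norm; since $\epsilon$ is a bounded $B$-bimodule map this gives $b\,\epsilon(u_\lambda)=\epsilon(\rho(b)u_\lambda)\to\epsilon(\rho(b))$ in $B$, while $\epsilon(u_\lambda)\to\epsilon''(1)$ weak-$*$. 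Hence $b\,\epsilon''(1)=\epsilon(\rho(b))\in B$, and symmetrically $\epsilon''(1)\,b\in B$, so $\epsilon''(1)\in M(B)$.

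The implication $\Rightarrow$ is the main obstacle: $\epsilon(u_\lambda)$ converges to $\epsilon''(1)$ only weak-$*$, whereas compactness of $\rho(b)$ is a norm statement. The first ingredient is a general fact about monotone nets, which I would prove separately: if an increasing bounded net $(a_\lambda)$ in a $C^*$-algebra $B_+$ has weak-$*$ supremum lying in $M(B)$, then $(a_\lambda)$ converges \emph{strictly}. Via the estimate $\|(a-a_\lambda)b\|^2\le\|a-a_\lambda\|\,\|b^*(a-a_\lambda)b\|$ this reduces to showing that a decreasing net of positive elements with weak-$*$ limit $0$ converges in norm; here $b^*(a-a_\lambda)b$ lies in $B$ because $a\in M(B)$, and I would establish norm convergence by extracting a weak-$*$ cluster point of states witnessing a putative lower bound and contradicting the normality of its canonical extension to $B''$. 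Applied with $a_\lambda=\epsilon(u_\lambda)$ and $a=\epsilon''(1)\in M(B)$, this shows that $(\epsilon(u_\lambda))$ is strictly Cauchy.

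The second ingredient is that, by condition (b) of Theorem~\ref{ladj_bimod_theorem}, the operator norm and the Hilbert $B$-module norm $\|L\|_B=\|\epsilon(L^*L)\|^{1/2}$ (the analogue of \eqref{compact_ip_equation}) are equivalent on $\Compact_A(F^*,F^*)\cong F^*\otimes_A F$. Combining the two, for $b\in B$ the elements $u_\lambda b=u_\lambda\rho(b)$ lie in $\Compact_A(F^*,F^*)$, and for $\lambda\le\mu$ the operator inequality $(u_\mu-u_\lambda)^2\le u_\mu-u_\lambda$ gives
\[ \|u_\mu b-u_\lambda b\|_B^2=\big\|b^*\epsilon\big((u_\mu-u_\lambda)^2\big)b\big\|\le \|b\|\,\|(\epsilon(u_\mu)-\epsilon(u_\lambda))b\|\longrightarrow 0 \]
by strict Cauchyness; so $(u_\lambda b)$ is $\|\cdot\|_B$-Cauchy, hence operator-norm Cauchy by the equivalence of norms, hence operator-norm convergent to some $L\in\Compact_A(F^*,F^*)$. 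Since $u_\lambda\rho(b)\to\rho(b)$ strictly, $L=\rho(b)$, so $\rho(b)\in\Compact_A(F^*,F^*)$; Proposition~\ref{counit_proposition} then produces the counit and finishes (a). The crux is exactly this passage from weak-$*$ to norm control, for which the strictness lemma and the finite-numerical-index equivalence of the two norms are what is needed.
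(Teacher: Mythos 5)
Your proof is correct, but it takes a much longer, self-contained route where the paper simply cites the literature. The paper's entire proof reads: by \cite[Theorem 2.22]{KPW}, $F$ has finite left (resp.\ right) index if and only if $B$ (resp.\ $A$) acts on $F^*$ (resp.\ $F$) by compact operators; the lemma then follows at once from Propositions \ref{unit_proposition} and \ref{counit_proposition} and Corollary \ref{adj_iff_ladj_corollary}. Your outer skeleton---reducing (c) to (a) and (b) via Corollary \ref{adj_iff_ladj_corollary}, getting (b) from (a) by the symmetry $\Phi\leftrightarrow\Phi^*$, and using Proposition \ref{counit_proposition} to restate (a) as the equivalence $\epsilon''(1)\in M(B)\Leftrightarrow\rho(B)\subseteq\Compact_A(F^*,F^*)$---is exactly the paper's; the difference is that you then prove this last equivalence directly rather than quoting \cite{KPW}, i.e.\ you have in effect reproved the cited KPW theorem. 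Your argument for it is sound: the easy implication is the approximate-unit/bimodule computation you give; for the converse, your noncommutative Dini lemma (a decreasing net in $B_+$ with weak-$*$ limit $0$ converges in norm, by the state-cluster-point argument) is true and does upgrade the weak-$*$ convergence $\epsilon(u_\lambda)\to\epsilon''(1)$ to strict convergence when $\epsilon''(1)\in M(B)$, and the completely bounded equivalence of the two operator space structures on $F^*$ (which is what finite numerical index gives, via Theorems \ref{Haagerup_theorem} and \ref{tensor_compact_theorem}) correctly converts $\|\cdot\|_B$-Cauchyness of $u_\lambda\rho(b)$ into operator-norm convergence, the limit being $\rho(b)$ by strict (or strong) convergence. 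What the paper's route buys is brevity: for the authors this lemma is a terminological dictionary between their notions and those of \cite{KPW}, not a theorem requiring proof. What your route buys is independence from the internals of \cite{KPW} and a transparent view of the analytic mechanism---membership of $\epsilon''(1)$ in $M(B)$ is precisely the weak-$*$-to-strict upgrade that forces compactness of the $B$-action. If you write this up, make explicit that $\epsilon$ is positive (you use this both for monotonicity of $\epsilon(u_\lambda)$ and for the inequality $\epsilon\bigl((u_\mu-u_\lambda)^2\bigr)\le\epsilon(u_\mu-u_\lambda)$); it follows from the identity $\epsilon(L^*L)=\langle L,L\rangle_{F^*\otimes_A F}$, the analogue of \eqref{compact_ip_equation} that you invoke.
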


\begin{proof}
It is shown in \cite[Theorem 2.22]{KPW} that $F$ has finite left (respectively, right) index if and only if $B$ (resp. $A$) acts on $F^*$ (resp. $F$) by compact operators. The asserted equivalences now follow from Propositions \ref{unit_proposition} and \ref{counit_proposition} and from Corollary \ref{adj_iff_ladj_corollary}
\end{proof}

These results are summarised in the following table, which relates our terminology to that of \cite{KPW}:

\begin{center}
\begin{tabular}{| r | l|}
\hline
 {The functor $\otimes_A F$ has} &  {if and only if the correspondence $F$ has}\\
 \hline
 \emph{a local adjoint} &  \emph{finite numerical index} \\
 \emph{a local adjoint with unit} &   \emph{finite numerical index and finite right index} \\
 \emph{a local adjoint with counit} &  \emph{finite numerical index and finite left index} \\
 \emph{an adjoint} &  \emph{finite index} \\
 \hline
\end{tabular}
\end{center}

\section{Examples}\label{examples_section}

\subsection{Commutative Examples}
\label{sec-commutative}
Let $X$ be a compact Hausdorff space.  The category of Hilbert modules over the $C^*$-algebra $C(X)$  is equivalent to the category of continuous fields of Hilbert spaces over $X$ (and adjointable operators between continuous fields) via the functor that associates to a continuous field its Hilbert module of continuous sections.

A continuous map $\varphi\colon X \to Y$  between compact Hausdorff spaces determines a homomorphism of $C^*$-algebras from $C(Y)$ to $C(X)$.  Let
\[
 F:=C(X)
\]
be the associated  correspondence from $C(Y)$ to $C(X)$.  From the point of view of continuous fields the tensor product functor 
\[
 \phi^* : =  {\otimes} F :\H_{C(Y)}\to \H_{C(X)}
\]
associated to $F$ is given by pullback $\phi^*$ of continuous fields of Hilbert spaces along the map $\varphi$.

\paragraph*{Coverings.}

Let us examine the correspondence $F$ above in the case of a surjective map between compact Hausdorff spaces.

\begin{definition}
Let $\pi:X\to Y$ be a continuous surjection of compact Hausdorff spaces. 
\begin{enumerate}[\rm (a)]

\item The map $\pi$ is a \emph{covering} if it is an open map, and if  $\# \pi^{-1}[y]$ (the number of points in the pre image of $y$) is a finite and locally constant function of $y\in Y$.

\item The map $\pi$ is a \emph{branched covering} if it is open and if the number  $\# \pi^{-1}[y]$ is finite and uniformly bounded over $y\in Y$.

\end{enumerate}
\end{definition}

The following is a consequence of results of Pavlov and Troitsky \cite{PT}, combined with Theorem \ref{adj_bimod_theorem} and Proposition \ref{FK_proposition} (below):

\begin{proposition}
Let $\pi:X\to Y$ be a continuous surjection of compact Hausdorff spaces. 
\begin{enumerate}[\rm (a)]
\item The pullback functor $\pi^*$ admits an adjoint if and only if $\pi$ is a covering. 
\item The functor $\pi^*$ admits a local adjoint if and only if $\pi$ is a   branched covering.
 \qed
 \end{enumerate}
\end{proposition}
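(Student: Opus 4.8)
The plan is to reduce both statements to the bimodule criteria of Theorems~\ref{adj_bimod_theorem} and~\ref{ladj_bimod_theorem}, and then to translate the resulting module-theoretic conditions into the topological language of (branched) coverings using the results of Pavlov and Troitsky \cite{PT} together with Proposition~\ref{FK_proposition}. Here $A=C(Y)$, $B=C(X)$, and $F=C(X)$ is the correspondence on which $C(Y)$ acts on the left through $\pi^*$ and $C(X)$ acts on the right by multiplication, with inner product $\langle f,g\rangle=\overline{f}\,g$. As a right $C(X)$-module, $F$ is free of rank one; and since $\Compact_{C(X)}(C(X),C(X))\cong C(X)$, condition~(c) of Theorem~\ref{adj_bimod_theorem} holds automatically, the left action factoring through $\pi^*\colon C(Y)\to C(X)=\Compact_{C(X)}(F,F)$. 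Thus in both parts the real content lies in the conjugate module $F^*=\overline{C(X)}$, viewed as a $C(X)$-$C(Y)$-bimodule on which $C(Y)$ acts by $a\mapsto a\circ\pi$.

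For part~(b) I would first exhibit the natural candidate for a $C(Y)$-valued inner product on $F^*$, namely the transfer
\[
\langle f^*,g^*\rangle(y)=\sum_{x\in\pi^{-1}(y)} m(x)\,\overline{f(x)}\,g(x),
\]
the fibrewise sum weighted by the local multiplicity $m(x)$ of $\pi$. That such an inner product lands in $C(Y)$ is exactly condition~(a) of Theorem~\ref{ladj_bimod_theorem}, and this is where \cite{PT} enters: the weighted fibre count $y\mapsto\sum_{x\in\pi^{-1}(y)}m(x)=\deg_y(\pi)$ must be a bounded continuous function of $y$, which happens precisely when $\pi$ is open with finite, uniformly bounded fibres, i.e.\ a branched covering. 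At a branch point the merging of sheets is compensated by the multiplicity, so $\deg_y(\pi)$ stays continuous; without openness, or with unbounded fibres, the transfer of the constant function $1$ already fails to lie in $C(Y)$. Granting condition~(a), condition~(b) comes for free: writing $E$ for $F^*$ equipped with the above inner-product norm, Proposition~\ref{FK_proposition} supplies the elementary comparison
\[
\|f\|_{C(X)}\ \le\ \|f^*\|_{E}\ \le\ \sqrt{\,\sup_y \deg_y(\pi)\,}\;\|f\|_{C(X)},
\]
together with its matrix amplifications, so the two operator-space structures on $F^*$ are completely boundedly equivalent, the equivalence constant being finite exactly because the fibres are uniformly bounded. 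Theorem~\ref{ladj_bimod_theorem} then yields a local adjoint if and only if $\pi$ is a branched covering.

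For part~(a) I would invoke Theorem~\ref{adj_bimod_theorem} in full. Condition~(c) is automatic as above, and by the unital reformulation in Example~\ref{unital_example} the remaining conditions reduce, for the unital algebras $C(X)$ and $C(Y)$, to requiring that $F=C(X)$ be finitely generated as a module over $C(Y)$ (finite generation over $C(X)$ being automatic, condition~(b) then following from~(c) and~(d), and condition~(a) being subsumed since a finitely generated module here is projective and hence a direct summand of a free module, carrying a compatible inner product). By the Serre--Swan correspondence, finite generation and projectivity of $C(X)$ over $C(Y)$ says that the field of fibre spaces $\ell^2(\pi^{-1}(y))$ is a genuine vector bundle of locally constant rank; by \cite{PT} this forces all multiplicities to equal $1$ and $\#\pi^{-1}(\,\cdot\,)$ to be locally constant, which is exactly the condition that $\pi$ be a covering. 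Since a covering is in particular a branched covering, the two parts are consistent.

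The main obstacle is the topological translation, which is the substance borrowed from \cite{PT}: proving that a weighted transfer of the required continuity exists precisely for open maps with uniformly bounded finite fibres (equivalently, that a well-defined multiplicity with bounded, continuous degree exists), and that finite generation of $C(X)$ over $C(Y)$ detects the local constancy of the fibre count. In particular the necessity of openness cannot be read off the naive transfer and must be extracted from the Pavlov--Troitsky analysis. On our side the work is organizational: verifying that condition~(c) is vacuous, that condition~(b) is automatic once~(a) holds via the $\sqrt{\deg}$-comparison of Proposition~\ref{FK_proposition}, and that finite generation subsumes~(a); after that both equivalences fall out of Theorems~\ref{adj_bimod_theorem} and~\ref{ladj_bimod_theorem}.
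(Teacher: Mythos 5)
Your strategy coincides with the paper's: both parts are reduced to the correspondence criteria (Theorems~\ref{adj_bimod_theorem} and~\ref{ladj_bimod_theorem}) with the topological content delegated to Pavlov--Troitsky, and your part~(a) is essentially the paper's argument verbatim --- condition (c) is automatic, Example~\ref{unital_example} reduces the rest to finite generation (hence projectivity) of $C(X)$ over $C(Y)$, and \cite[Theorem 1.3]{PT} translates that into $\pi$ being a covering. Your sufficiency direction in part~(b) is also sound in substance: a system of weights is exactly a finite-index conditional expectation $\phi\colon C(X)\to C(Y)$, and the complete-bounded equivalence of the two operator space structures on $F^*$ is the estimate from Proposition~\ref{FK_proposition}.

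The gap is the necessity direction of (b). What you actually argue is that the \emph{particular} weighted-transfer form fails to take values in $C(Y)$ when $\pi$ is not open or has unbounded fibres; but ``the local multiplicity $m(x)$ of $\pi$'' is only defined once $\pi$ is already known to be a branched covering, and necessity requires ruling out \emph{every} $C(Y)$-valued inner product on $F^*$ satisfying conditions (a) and (b) of Theorem~\ref{ladj_bimod_theorem}, not just this candidate. You acknowledge the point and defer it to ``the Pavlov--Troitsky analysis'', but the PT result in question, \cite[Theorem 1.1]{PT}, is a statement about finite-index conditional expectations; to invoke it one must first convert an arbitrary local adjunction into such an expectation. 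That conversion is precisely the converse half of Proposition~\ref{FK_proposition} --- the computation showing $\psi(T^*T)\ge \mathrm{const}\cdot\|T^*T\|$, followed by the adjustment by an invertible central multiplier --- which you cite only in the forward direction. This is exactly how the paper closes the loop: Proposition~\ref{FK_proposition}, used as an equivalence, gives ``local adjoint $\Longleftrightarrow$ finite-index conditional expectation $C(X)\to C(Y)$'', and then \cite[Theorem 1.1]{PT} gives ``$\Longleftrightarrow$ branched covering'', so both directions of (b) are immediate. (A secondary slip: the weights constructed in \cite{PT} lie in $[0,1]$ and give a unital expectation, so the two-sided estimate should read $\|f^*\|_E\le\|f\|_{C(X)}\le \kappa^{1/2}\|f^*\|_E$ with $\kappa$ the index; your displayed inequalities correspond to multiplicities at least $1$, i.e.\ to an unnormalized transfer, whose continuity for a general branched covering is not what \cite{PT} establishes.)
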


\begin{proof}
Theorem \ref{adj_bimod_theorem} implies that $\pi^*$ admits an adjoint if and only if $F=C(X)$ is finitely generated and projective over $C(Y)$, which by \cite[Theorem 1.3]{PT} is equivalent to $\pi$ being a covering. Proposition \ref{FK_proposition} (below) implies that $\pi^*$ admits a local adjoint if and only if there is a finite-index conditional expectation $C(X)\to C(Y)$, which by \cite[Theorem 1.1]{PT} is equivalent to $\pi$ being a branched covering.
\end{proof}

The $C(Y)$-valued inner product on the conjugate module  $F^* = C(X)^*$ is given by a formula of the kind
\[
\langle f _1^* , f_2^*  \rangle (y) =  \sum _{x\in \pi^{-1}[y]} \mu_y(x) f_1(x) \overline{ f_2(x)}
\]
where the weight functions $\mu_y:\pi^{-1}[y]\to [0,1]$ are determined (usually not uniquely) by the branching of the cover over the point $y$. For example, if $\pi$ is the quotient map for a finite group action, one can take $\mu_y$ to be the constant function $1/\#\pi^{-1}[y]$ (see Proposition \ref{W_uniqueness_proposition}). For a construction of suitable $\mu_y$ for a general branched covering, see \cite[Proof of Theorem 4.3]{PT}.

\paragraph*{Open Subsets.}
 Now let  $X$ be a compact Hausdorff space, and let $U\subset X$ be an open subset. 
 Let $F = C_0(U)$. This  is a correspondence from $C(X)$ to $C_0(U)$, whose associated tensor-product functor $\H_{C(X)}\to \H_{C_0(U)}$ is restriction of continuous fields of Hilbert spaces from $X$ to $U$ (even though $U$ is not compact it is still true that $\H_{C_0(U)}$ is equivalent to the category of continuous fields of Hilbert spaces over $U$). 
 
 \begin{proposition}
 The  functor of restriction to $U$ has a local adjoint, namely extension of continuous fields by zero, which is represented by $E=C_0(U)$ viewed as a correspondence from $C_0(U)$ to $C(X)$.  The local adjoint  is an adjoint if and only if $U$ is both open and closed in $X$.
 \end{proposition}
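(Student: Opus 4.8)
The plan is to realize the local adjoint explicitly and then settle the adjointness question through the unit/counit criteria. Write $A=C(X)$ and $B=C_0(U)$, so that $F=C_0(U)$ is the correspondence from $A$ to $B$ whose tensor-product functor is restriction. First I would equip the proposed adjoint $E=C_0(U)$, regarded as a right $C(X)$-module by pointwise multiplication, with the $C(X)$-valued inner product $\langle f_1,f_2\rangle=\overline{f_1}f_2$. The product $\overline{f_1}f_2$ lies in $C_0(U)$, and the only point needing comment is that a function in $C_0(U)$ extends by zero to a continuous function on all of $X$; this is the standard fact that, for $U$ open in a compact Hausdorff space, extension by zero realizes $C_0(U)$ as an ideal of $C(X)$. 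Granting this, $E$ is a Hilbert $C(X)$-module with $\|f\|_E=\|f\|_\infty$, the left multiplication action of $B=C_0(U)$ is nondegenerate and adjointable, and the canonical conjugate-linear map identifies $E$ with the conjugate bimodule $F^*$. This verifies condition (a) of Theorem~\ref{ladj_bimod_theorem}.

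For condition (b) I would compare the two operator space structures directly, exploiting commutativity. On one side $M_n(F)=M_n(C_0(U))=C_0(U,M_n)$ carries the pointwise operator norm, and by Definition~\ref{def-operator-space-conj} the conjugate structure on $F^*$ is obtained by transposing matrices; on the other side the Hilbert $C(X)$-module norm on $M_n(E)$ is $\|g\|_{M_n(E)}=\sup_{u\in U}\|g(u)\|_{M_n}$. Since the operator norm on $M_n(\C)$ is invariant under both transposition and entrywise complex conjugation, the two structures coincide, so the identification $E\cong F^*$ is not merely a completely bounded isomorphism but a complete isometry. Theorem~\ref{ladj_bimod_theorem} then shows that $E=C_0(U)$ is a local adjoint to $F$; from the point of view of continuous fields this local adjoint is extension by zero.

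It remains to determine when the local adjunction is an actual adjunction, for which I would invoke Corollary~\ref{adj_iff_ladj_corollary}: an adjunction exists precisely when both a unit and a counit exist. For the counit, Proposition~\ref{counit_proposition} requires that $B=C_0(U)$ act on $E=F^*$ through compact operators, and this always holds: multiplication by $b\in C_0(U)$ on $E$ is a norm limit of finite sums of rank-one operators $f_1\otimes f_2^*\colon g\mapsto f_1\overline{f_2}g$, because products $f_1\overline{f_2}$ are dense in $C_0(U)$ and the operator norm of multiplication by $c$ equals $\|c\|_\infty$. Thus a counit is always present. For the unit, Proposition~\ref{unit_proposition} requires that $A=C(X)$ act on $F=C_0(U)$ through $\Compact_B(F,F)\cong C_0(U)$ (Lemma~\ref{lem-compacts}); here $g\in C(X)$ acts by multiplication by its restriction $g|_U$, which is compact exactly when $g|_U\in C_0(U)$. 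Testing on $g=1$ shows this fails unless the constant function $1$ lies in $C_0(U)$, i.e.\ unless $C_0(U)$ is unital, i.e.\ unless $U$ is compact; and a compact open subset of a compact Hausdorff space is precisely a clopen subset. Hence a unit exists if and only if $U$ is clopen, and combining this with the ever-present counit gives that the local adjunction is an adjunction if and only if $U$ is clopen.

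The genuinely delicate points here are bookkeeping rather than deep: keeping the conjugation conventions straight when identifying $E$ with $F^*$, and recognizing the asymmetry in the final step, namely that the counit survives for every open $U$ while only the unit detects the non-compactness of $U$. The analytic input that might a priori be hard, the completely bounded equivalence of operator space structures in condition (b), trivializes in the commutative setting because all matrix norms are computed pointwise.
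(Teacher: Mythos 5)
Your proof is correct, and it reaches the same conclusion by a genuinely different route from the paper's. The paper disposes of this proposition in one line, as a special case of Proposition~\ref{Morita_proposition}: the inner products on $F=C_0(U)$ and $F^*=C_0(U)$ satisfy the imprimitivity identity \eqref{Morita_equation}, $\langle x^*,y^*\rangle z = x\langle y,z\rangle$, so $F$ and $F^*$ implement a Morita equivalence between the ideals $B_F=C_0(U)=B$ and $A_{F^*}=C_0(U)\subseteq C(X)$. That proposition then yields the local adjunction at once (the identity forces the two norms on $F^*$, and on every $M_n(F^*)$, to coincide), and yields adjointness if and only if both ideals are complemented --- $B_F$ trivially is, and $A_{F^*}=C_0(U)$ is complemented in $C(X)$ exactly when $U$ is clopen. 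You instead verify conditions (a) and (b) of Theorem~\ref{ladj_bimod_theorem} by hand, observing correctly that (b) trivializes in the commutative setting because all matrix norms are computed pointwise and both transposition and entrywise conjugation preserve the operator norm on $M_n(\C)$; you then settle adjointness through Propositions~\ref{unit_proposition} and \ref{counit_proposition} and Corollary~\ref{adj_iff_ladj_corollary}. What your route buys is diagnostic precision: it exhibits the counit as always present and locates the failure of adjointness entirely in the unit, i.e.\ in whether $C(X)$ acts on $C_0(U)$ through $\Compact_{C_0(U)}(C_0(U),C_0(U))\cong C_0(U)$, which your test against the constant function $1$ resolves cleanly; moreover, since that unit criterion coincides with condition (c) of Theorem~\ref{adj_bimod_theorem}, your argument also excludes an adjoint realized by any other candidate correspondence, so the biconditional is fully justified. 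What the paper's route buys is economy and reuse: the single identity \eqref{Morita_equation} replaces both of your verifications, and the same Morita proposition covers this example and the related commutative ones simultaneously.
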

 
 \begin{proof}
   This is a special case of Proposition \ref{Morita_proposition}, below.
 \end{proof}

\paragraph*{Infinite-Dimensional Fibers.}
In the above examples, the bimodules that occur, viewed as continuous fields of Hilbert spaces, have finite fiber dimensions, namely fiber dimensions zero or one. Interesting examples with infinite fiber dimension do not occur in the purely commutative context.  For instance, if  $H$ is a Hilbert space, viewed as a correspondence from $\C$ to $\C$, then $H$ has a local adjoint if and only if  $H$ is finite-dimensional (in which case the local adjoint is an adjoint).

\subsection{Relation with Morita Equivalences}

Let $F$ be a correspondence from $A$ to $B$, and suppose that $F^*$ has been given an $A$-valued inner product making it into a correspondence from $B$ to $A$, such that
\begin{equation}
 \label{Morita_equation}
 \langle x^*, y^* \rangle z = x \langle y, z\rangle
\end{equation}
for all $x,y,z\in F$. This condition says that $F$ and $F^*$ restrict to mutually inverse (strong) Morita equivalences between the ideals $A_{F^*}\subseteq A$ and $B_F\subseteq B$: see \cite[8.1.2]{BlM}.

\begin{proposition}\label{Morita_proposition}
The correspondences $F$ and $F^*$ are local adjoints. They are adjoints if and only if the ideals $A_{F^*}$ and $B_F$ are complemented.
\end{proposition}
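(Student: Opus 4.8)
The plan is to treat the two assertions separately, using Theorem~\ref{ladj_bimod_theorem} for the first and Corollary~\ref{adj_iff_ladj_corollary} together with Propositions~\ref{unit_proposition} and~\ref{counit_proposition} for the second. The point of departure is that the hypothesis \eqref{Morita_equation} says precisely that the left action makes $F$ an $A_{F^*}$--$B_F$ imprimitivity bimodule, so the basic structure theory of Morita equivalences (already cited above as \cite[8.1.2]{BlM}) is available. In particular the action of $A$ restricts to an isometric $*$-isomorphism $A_{F^*}\xrightarrow{\cong}\Compact_B(F,F)$ carrying $\langle x^*,y^*\rangle$ to the rank-one operator $x\otimes y^*$, the last identity being exactly \eqref{Morita_equation} read as an operator acting on $F$.

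For the first assertion I would verify the two conditions of Theorem~\ref{ladj_bimod_theorem}. Condition (a) is the standing hypothesis. For condition (b) I would show that the conjugate operator space structure on $F^*$ and its Hilbert $A$-module structure are in fact completely \emph{isometric}. The key computation is that the isomorphism $A_{F^*}\cong\Compact_B(F,F)$ sends $\langle x^*,x^*\rangle$ to $x\otimes x^*$, whence $\|\langle x^*,x^*\rangle\|_A = \|x\otimes x^*\|_{\Compact_B(F,F)} = \|\langle x,x\rangle\|_B$; that is, the $A$-norm and the $B$-norm of $x$ coincide. Applying the same reasoning to the amplification $M_n(F)$, which is an $M_n(A_{F^*})$--$M_n(B_F)$ imprimitivity bimodule, gives equality of the corresponding matrix norms. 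I would then observe that, because of the transpose built into Definition~\ref{def-operator-space-conj}, \emph{both} operator space norms of a matrix $[f_{ij}^*]\in M_n(F^*)$ are in fact computed from the transposed matrix $[f_{ji}]\in M_n(F)$ — one through the right $B$-valued inner product, the other through the left $A$-valued inner product ${}_A\langle x,y\rangle := \langle x^*,y^*\rangle$ — so the norm equality on $M_n(F)$ makes the two structures agree at every matrix level. This verifies (b), and Theorem~\ref{ladj_bimod_theorem} then yields that $F$ and $F^*$ are local adjoints.

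For the second assertion I would use that, by Corollary~\ref{adj_iff_ladj_corollary}, the local adjunction is an adjunction precisely when it admits both a unit and a counit. By Proposition~\ref{unit_proposition} a unit exists iff $A$ acts on $F$ through compact operators, and by Proposition~\ref{counit_proposition} a counit exists iff $B$ acts on $F^*$ through compact operators; by symmetry it suffices to analyse the first. Writing $\pi\colon A\to\Bounded_B(F,F)$ for the action, the identities $\pi(A_{F^*})=\Compact_B(F,F)$ and $\ker\pi=A_{F^*}^{\perp}$, together with the injectivity of $\pi$ on $A_{F^*}$ coming from the isomorphism above (which gives $A_{F^*}\cap A_{F^*}^{\perp}=0$), show that $\pi(A)\subseteq\Compact_B(F,F)$ holds iff $\pi(A)=\pi(A_{F^*})$, i.e. iff $A=A_{F^*}+A_{F^*}^{\perp}=A_{F^*}\oplus A_{F^*}^{\perp}$ — that is, iff $A_{F^*}$ is a complemented ideal. (One direction is also immediate from Corollary~\ref{decomposition_corollary}.) The symmetric statement gives a counit iff $B_F$ is complemented, and combining the two proves that $F$ and $F^*$ are adjoints iff both $A_{F^*}$ and $B_F$ are complemented.

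The main obstacle is condition (b) in the first assertion: a priori the conjugate (``row'') and Hilbert-module (``column'') operator space structures on $F^*$ need not be completely boundedly equivalent — indeed they are not for an infinite-dimensional Hilbert space viewed as a $\C$--$\C$ correspondence, as in Example~\ref{Hilbert_cb_example}. What rescues the situation here is the imprimitivity identity \eqref{Morita_equation}, which forces the $A$- and $B$-valued norms to coincide at all matrix levels and makes the transposes in the two definitions cancel. Pinning this down carefully at the level of matrices, rather than merely for single vectors, is the crux of the argument; the remainder of the proof is a relatively mechanical assembly of results established earlier in the paper.
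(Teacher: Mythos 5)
Your proof is correct and takes essentially the same route as the paper's: the first assertion is established by showing the conjugate and Hilbert-module operator space structures on $F^*$ agree completely isometrically, using the matrix-level validity of \eqref{Morita_equation} (where the paper cites \cite[Lemma 8.1.15]{BlM} for the norm equality, you rederive it from the isometric isomorphism $A_{F^*}\cong\Compact_B(F,F)$), and then invoking Theorem~\ref{ladj_bimod_theorem}. The second assertion is likewise handled as in the paper, by reducing via Corollary~\ref{adj_iff_ladj_corollary} and Propositions~\ref{unit_proposition} and~\ref{counit_proposition} to the observation that $A$ (resp.\ $B$) acts by compact operators exactly when $A_{F^*}$ (resp.\ $B_F$) is complemented.
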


\begin{proof}
The equality \eqref{Morita_equation} implies that the two norms on $F^*$ are equal: see \cite[Lemma 8.1.15]{BlM}. The equality \eqref{Morita_equation} is also satisfied by the induced inner products on $M_n(F)$ and $M_n(F^*)$, and so the identity map on $F^*$ is a complete isometry between the two operator structures; thus $F$ and $F^*$ are local adjoints, by Theorem \ref{ladj_bimod_theorem}.

If $F$ and $F^*$ are adjoints, then the ideals $A_{F^*}$ and $B_F$ are complemented by Corollary \ref{decomposition_corollary} and Proposition \ref{counit_proposition}. Conversely, suppose that $A_{F^*}$ is complemented in $A$. The formula \eqref{Morita_equation} implies that the elements of $A_{F^*}$ act on $F$ by $B$-compact operators. On the other hand, \eqref{Morita_equation} implies that $F=\Compact_B(F,F) F = A_{F^*} F$, showing that the complementary ideal $A_{F^*}^\perp$ acts by zero on $F$. Therefore $A$ acts by compact operators on $F$, and Proposition \ref{unit_proposition} thus implies that the local adjunction admits a unit; a similar argument proves the existence of a counit.
\end{proof}

The local adjunctions for which the inner products satisfy \eqref{Morita_equation} are precisely those for which the adjunction isomorphisms
\[
   \Phi:\Compact_B(  X\otimes _A F , Y)\stackrel \cong \longrightarrow \Compact_A(X,  Y\otimes _B F^*)
\]
are isomorphisms of \emph{ternary rings of operators}, meaning that 
\begin{equation*}\label{tro_eq}
 \Phi(RS^*T) = \Phi(R)\Phi(S)^*\Phi(T)
\end{equation*}
for every $R,S,T\in \Compact_B(X\otimes_A F, Y)$. (This is an immediate consequence of the formula \eqref{Phi_equation} for $\Phi$; we owe this observation to an anonymous referee.) This is in turn equivalent to the condition that the isomorphisms $\Phi$ be isometries, or equivalently complete isometries, as follows from a theorem of Hamana, Kirchberg and Ruan \cite[Corollary 4.4.6]{BlM} and a naturality argument as in Theorem \ref{ladj_cb_theorem}.

\subsection{Conditional Expectations}
\label{conditional_section}

Let $A$ be a $*$-subalgebra of a $C^*$-algebra $C$, and assume that $A$ contains an approximate unit for $C$.  Recall that  a \emph{conditional expectation} from $C$ to $A$ is a  positive, idempotent, $A$-$A$-bimodule  map $\phi:C \to A$. 

Following Frank and Kirchberg \cite[Theorem 1]{FK}, we make the following definition.

\begin{definition}
\label{def-fk}
A conditional expectation  $\phi\colon C \to A$ is of \emph{finite index} if one of the following equivalent conditions is satisfied:
 \begin{enumerate}[\rm (a)]
  \item $\phi(c^*c)\neq 0$ whenever $c\neq 0$, and $C$ is complete in the norm 
  \[
 \|c \|_\phi ^2 =  \|\phi(c^*c)\| .
 \]
 
  \item There is a $\lambda \geq 0$ such that $\lambda\phi-\id_C\colon C\to C$ is a positive map.
  \item There is a $\kappa \geq 0$ such that $\kappa\phi-\id_C\colon C \to C $ is a completely positive map.
 \end{enumerate}
\end{definition}

\begin{proposition}
\label{FK_proposition}
Let $F$ be a correspondence from $A$ to $B$ and suppose that in fact $A\subseteq \Compact_B (F,F)$.  
The correspondence $F$ admits a local adjoint if and only if there is a finite-index conditional expectation $\phi:\Compact_B (F,F) \to A$.
\end{proposition}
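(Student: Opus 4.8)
Set $C := \Compact_B(F,F)$, so that the hypothesis reads $A\subseteq C$, with $A$ containing an approximate unit for $C$ by nondegeneracy of the action; this is exactly the ambient setting of Definition~\ref{def-fk}. The plan is to exploit the identification $C\cong F\otimes_B F^*$ of equation \eqref{ladj_compact_equation}, under which the rank-one operators $f_1\otimes f_2^*$ densely span $C$, in order to set up a dictionary between $A$-valued inner products on the conjugate $F^*$ and positive $A$-$A$-bimodule maps $C\to A$. Concretely, any such inner product determines the map $f_1\otimes f_2^*\mapsto \langle f_1^*,f_2^*\rangle_A$ and conversely; this is precisely the map $\delta$ of Lemma~\ref{ladj_ip_lemma} and equation \eqref{compact_ip_equation}. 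By Theorem~\ref{ladj_bimod_theorem}, $F$ has a local adjoint if and only if $F^*$ carries such an inner product whose induced Hilbert-module operator space structure is completely boundedly equivalent to the conjugate operator space structure (the latter being the structure $F^*$ inherits from the $C$-valued pairing $f_1^*,f_2^*\mapsto f_1\otimes f_2^*$). Thus the whole statement reduces to comparing these two operator space structures on $F^*$ with the finite-index conditions of Frank--Kirchberg.

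For the direction that a finite-index conditional expectation $\phi\colon C\to A$ yields a local adjoint, I would define an $A$-valued inner product on $F^*$ by $\langle f_1^*,f_2^*\rangle_A := \phi(f_1\otimes f_2^*)$. Positivity is immediate from positivity of $\phi$, and faithfulness, hence definiteness, comes from condition (a) of Definition~\ref{def-fk}; completeness of the $A$-norm will follow from the norm equivalence established below. The left $B$-action $b\cdot f^* = (fb^*)^*$ is adjointable and nondegenerate for this inner product---its adjointability reduces, via $\phi$, to the $C$-level identity $\langle bf_1^*,f_2^*\rangle_C=\langle f_1^*,b^*f_2^*\rangle_C$---so $F^*$ becomes a correspondence from $B$ to $A$, giving condition (a) of Theorem~\ref{ladj_bimod_theorem}. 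The crux is condition (b): since $\phi$ is a conditional expectation it is completely positive and completely contractive, giving $\|f^*\|_A\le \|f^*\|_{F^*}$ at every matrix level, while the completely positive estimate $\kappa\phi-\id_C\ge 0$ of Definition~\ref{def-fk}(c) yields the reverse inequality $\|f^*\|_{F^*}\le \sqrt{\kappa}\,\|f^*\|_A$ at every matrix level. Hence the two operator space structures are completely boundedly equivalent, and Theorem~\ref{ladj_bimod_theorem} produces the local adjoint $F^*$.

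For the converse, a local adjoint gives, by Proposition~\ref{unit_proposition} (the hypothesis $A\subseteq C$ being condition (c) there), an adjointable positive $A$-$A$-bimodule map $\delta\colon C\to A$ with $\delta(f_1\otimes f_2^*)=\langle f_1^*,f_2^*\rangle_A$. This $\delta$ need not fix $A$; its restriction to $A$ is multiplication by a positive central multiplier $z\in M(A)$, and I would normalize by setting $\phi := z^{-1}\delta$, which is then a genuine conditional expectation. Its finite index is to be read off from the complete boundedness of the adjunction isomorphisms (Theorem~\ref{ladj_bimod_theorem}(b), equivalently finite numerical index in the sense of Section~\ref{sec-indexes}): the completely bounded equivalence of the two operator space structures on $F^*$ is to be converted into one of the equivalent conditions of Definition~\ref{def-fk} for $\phi$, for instance completeness of $C$ in the norm $\|c\|_\phi^2=\|\phi(c^*c)\|$.

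The main obstacle is the forward direction, in two technical points. First, that the central multiplier $z$ is invertible, which I expect to extract from finiteness of the \emph{left} numerical index (in the model case of a branched covering this is exactly the statement that the branching weights are bounded below, forcing $z=\delta(1)$ bounded below). Second, and more delicate, is the precise passage from the completely bounded norm equivalence of Theorem~\ref{ladj_bimod_theorem}(b) to one of the finite-index conditions of Definition~\ref{def-fk}, since the numerical index inequalities of \cite{KPW} are norm estimates whereas the Frank--Kirchberg conditions (b) and (c) are operator inequalities. Here I would rely on the equivalence of the three Frank--Kirchberg conditions \cite{FK}, aiming to verify the most accessible one---completeness of $C$ in $\|\cdot\|_\phi$---by combining both the left and right numerical index inequalities with the identification $C\cong F\otimes_B F^*$.
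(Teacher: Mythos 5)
Your forward implication (finite-index conditional expectation $\Rightarrow$ local adjoint) is correct and is essentially the paper's own argument: define $\langle f_1^*,f_2^*\rangle_A\coloneq\phi(f_1\otimes f_2^*)$, use complete contractivity of $\phi$ for one matrix-norm inequality and the completely positive estimate $\kappa\phi-\id\ge 0$ for the other, then invoke Theorem~\ref{ladj_bimod_theorem}.

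The converse, however, has a genuine gap, which you flag yourself (though you mislabel it: the unresolved points about the multiplier $z$ and the finite-index condition belong to the converse, not the forward direction). Your skeleton --- the completely positive $A$-$A$-bimodule map $\delta\colon\Compact_B(F,F)\to A$ with $\delta(f_1\otimes f_2^*)=\langle f_1^*,f_2^*\rangle_A$, whose restriction to $A$ is a positive central multiplier $z$, followed by the normalization $\phi\coloneq z^{-1}\delta$ --- is also the paper's skeleton, but everything hinges on a uniform lower bound that your proposal never establishes. The paper gets it by direct computation: for $T=\sum_{i=1}^n x_i\otimes y_i^*$ one has
\[
\delta(T^*T)=\sum_{i,j=1}^n\bigl\langle\,\langle x_i,x_j\rangle\,y_j^*,\;y_i^*\bigr\rangle_A,
\]
and both $\|\delta(T^*T)\|$ and $\|T^*T\|$ are realized as the squared norms of one and the same element, namely the column $\bigl[\langle x_i,x_j\rangle\bigr]^{1/2}(y_1^*,\dots,y_n^*)^{\top}$, computed in $M_n(E)$ and in $M_n(F^*)$ respectively. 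Since the identity map between these two operator space structures is \emph{completely} bounded by Theorem~\ref{ladj_bimod_theorem}(b) --- boundedness at level $1$ would not suffice, as this element lives at matrix level $n$ --- one obtains $\|\delta(T^*T)\|\ge c\,\|T^*T\|$ with $c>0$ independent of $n$ and $T$. That single inequality closes both of your gaps at once: applied to positive elements of $A$ (which lie in the closure of the operators $T^*T$, since $A\subseteq\Compact_B(F,F)$) it shows $z$ is bounded below, hence invertible; and transferred to $\phi=z^{-1}\delta$ it makes $\|\cdot\|_\phi$ equivalent to the $C^*$-norm on $\Compact_B(F,F)$, which is exactly faithfulness plus completeness, i.e.\ condition (a) of Definition~\ref{def-fk} --- so the conversion into the operator inequalities (b) or (c) that worried you is never needed. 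Your proposed substitutes (finiteness of the left numerical index for invertibility of $z$; ``combining the left and right numerical index inequalities'' for completeness) are not carried out, and it is not evident they work as stated: the numerical-index inequalities of \cite{KPW} are upper bounds on the maps $\epsilon$ and $\delta$, and promoting them to a matrix-level lower bound on $\delta$ is precisely the work done by the computation above. (A minor separate remark: your appeal to Proposition~\ref{unit_proposition} for adjointability of $\delta$ is correct but unnecessary; complete positivity of $\delta$, from Lemma~\ref{ladj_ip_lemma} and Theorem~\ref{tensor_compact_theorem}, is all the paper uses.)
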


\begin{proof} 
Suppose given a finite-index conditional expectation as in the statement of the proposition.    Define an $A$-valued inner product on $F^*$ by
\[
\langle f_1^*,f_2^* \rangle  =  \phi(f_1 \otimes f_2^*).
\]
 Condition (a) in Definition~\ref{def-fk}  ensures that $F^*$ is a correspondence from $B$ to $A$.  Denote by $E$ this correspondence, with the operator space structure it inherits from the $A$-valued inner product. The canonical map $F^*\to E$ is completely bounded because $\phi$, like all conditional expectations, is completely bounded. The finite index condition (c) implies that the inverse map $E\to F^*$ is also completely bounded, since for each $f^*=[f^*_{i,j}]\in M_n(F^*)$ we have the estimate
 \[
 \begin{aligned}
  \kappa\|f^*\|^2_{M_n(E)}  = \|M_n(\kappa\phi)(f\otimes_{M_n(B)} f^*)\|_{M_n(A)} &\geq \| f\otimes_{M_n(B)} f^*\|_{M_n(\Compact_B(F,F))} \\
  & =  \|f^*\|^2_{M_n(F^*)}.
 \end{aligned}
\]
Thus Theorem \ref{ladj_bimod_theorem} implies that $F$ and $E$ are local adjoints.
 
Conversely, suppose that $F$ has a local adjoint, and again denote by $E$ the conjugate space $F^*$ with the operator space structure it inherits from the given $A$-valued inner product.  The $A$-valued inner product  determines a map of operator spaces 
\[
F \otimes _B F^* \longrightarrow A
\]
thanks to Lemma~\ref{ladj_ip_lemma}, and so   thanks to Theorem~\ref{tensor_compact_theorem}, a (completely) positive map
\[
\psi\colon \Compact_B (F,F) \longrightarrow A
\]
of $A$-$A$-bimodules that sends the elementary compact operator $f_1 \otimes f_2^*$ on $F$ to $\langle f_1^*, f_2^*\rangle_A$.  

Now let 
$
T = \sum _{i=1}^n x_i \otimes y_i^* \in \Compact (F).
$
We compute that 
\[
\psi (T^*T) =  \sum_{i,j=1}^n\bigl  \langle \langle x_i, x_j\rangle  y_j^*, y_i^* \bigr\rangle_A
\]
where the inside inner product is the $B$-valued inner product of $F$, and hence, after a further computation,  that $\| \psi (T^*T)\| $ is the norm of the element
 \[
 \begin{pmatrix} 
 \langle x_1, x_1\rangle & \dots & \langle x_1,x_n\rangle \\
 \vdots && \vdots \\
  \langle x_n, x_1\rangle & \dots & \langle x_n,x_n\rangle 
  \end{pmatrix}^{\frac 12} 
  \begin{pmatrix} y_1^* \\ \vdots \\ y_n ^*\end{pmatrix} \in M_n (E)
 \]
On the other hand a third computation shows that  the $M_n(F^*)$-norm of this element is $\| T^*T\|$.  Using the fact that the norms on $E$ and $F^*$ are completely equivalent we find that 
\[
\| \psi (T^*T) \| \ge \text{constant} \cdot  \|T^*T\|
\]
for some constant independent of $T$.

The restriction of the map $\psi$ to $A\subseteq \Compact_B (F,F)$ is multiplication by some positive and central element of the multiplier algebra of $A$.  The computation above shows that this element is invertible.  Adjusting $\psi$ by multiplying with the inverse of this element we obtain a finite-index conditional expectation, as required.\end{proof}

%\begin{remark}
% The local adjunctions coming from finite-index conditional expectations always admit a unit, since the subalgebra $A$ acts on $B$ by $B$-compact operators. Conversely, every local adjunction admitting a unit is given, up to a Morita subequivalence, by a finite-index conditional expectation. To see this, suppose that $F$ is a Hilbert $A$-$B$-bimodule admitting a local adjoint $F^*$ and a unit $A\to F\otimes_B F^*$. Assume that $F^*$ is full over $A$; in view of Corollary \ref{decomposition_corollary} this is no serious loss of generality. 
% The Hilbert $A$-$B$-bimodule $F$ decomposes as
% \[ F\cong  E \otimes_K F\]
 %where $E$ is the $C^*$-algebra $K=\Compact_B(F,F)$ considered as a Hilbert $A$-$K$-bimodule, and $F$ on the right-hand side is being considered as a Hilbert $K$-$B$-bimodule. This bimodule $F$ is a Morita equivalence between $K$ and the ideal $B_F\subseteq B$. As for $E$, the map $\delta$ of Proposition \ref{unit_proposition} is a finite-index conditional expectation $K\to A$, so $E$ has a local adjoint as in Proposition \ref{FK_proposition}. Taking conjugates gives a corresponding decomposition for local adjunctions with counits.
%\end{remark}

Here is an example that we shall use when analyzing parabolic induction in Section~\ref{parabolic_section}.  
Let $W$ be a finite group acting by $*$-automorphisms on a $C^*$-algebra $B$, and also acting projectively by twisted automorphisms on a Hilbert $B$-module $F$.    This means that associated to each $w\in W$ there is a $\C$-linear operator 
\[
U_w \colon F \longrightarrow F 
\]
such that 
\begin{enumerate}[\rm (i)]
\item  $ U_w(fb )  = U_w(f)  w (b)    $ for all   $f\in F $, all $b\in B$, and 
\item $\langle U_w(f_1), U_w(f_2)\rangle = w(\langle f_1,f_2\rangle)$ for all $f_1,f_2\in F $.
\end{enumerate}
and such that moreover 
\begin{enumerate}
\item[\rm (iii)]  $U_{w_1}(U_{w_2}(f) ) = U_{w_1w_2}(f) u({w_1,w_2})$
\end{enumerate}
for some unitary $u({w_1,w_2})$ in the multiplier algebra of $B$.  The formula 
\[
w(T)  = U_wTU_w^{-1} \colon F \longrightarrow F 
\]
 defines  a genuine   action of $W$ by automorphisms on the $C^*$-algebra $\Compact_B (F,F)$.

\begin{proposition}\label{W_uniqueness_proposition}
Let $W$ be a finite group acting projectively by twisted automorphisms on a Hilbert $B$-module $F$, as above, and let 
$A$ be the fixed-point algebra $\Compact_B (F,F)^W$. Then $F$, considered as a correspondence from $A$ to $B$, admits a unique local adjoint $E$ for which the canonical action of $W$ is isometric. The isomorphisms
\[ \Phi_{X,Y}:\Compact_B(X\otimes_A F, Y) \to \Compact_A(X, Y\otimes_B E)\]
for this local adjunction satisfy
\[ \| \Phi_{X,Y} \|_{\cb} = 1 \qquad \text{and} \qquad \| \Phi_{X,Y}^{-1} \|_{\cb} \leq |W|^{1/2}.\]
\end{proposition}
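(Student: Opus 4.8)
The plan is to realize the candidate local adjoint through the averaging conditional expectation associated to the $W$-action, and then to read the two operator-space bounds off the positivity of that expectation. Write $C=\Compact_B(F,F)$, so that by hypothesis $A=C^W\subseteq C$. First I would introduce the averaging map
\[
\phi\colon C\longrightarrow A,\qquad \phi(T)=\frac{1}{|W|}\sum_{w\in W} w(T)=\frac{1}{|W|}\sum_{w\in W}U_wTU_w^{-1},
\]
and check that it is a conditional expectation: positivity and complete positivity hold because each $w$ is a $*$-automorphism, while idempotence and the $A$-bimodule property follow from $\phi(T)\in A=C^W$ together with the fact that $W$ fixes $A$ pointwise. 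For the finite-index condition I would verify Definition~\ref{def-fk}(c) with $\kappa=|W|$: since the action of $W$ on $C$ is genuine, the $w=e$ term of $|W|\phi$ is $\id_C$, so $|W|\phi-\id_C=\sum_{w\neq e}w$ is a sum of $*$-automorphisms and hence completely positive. Proposition~\ref{FK_proposition} (applicable because $A\subseteq C$) then yields a local adjoint $E=F^*$ with $A$-valued inner product
\[
\langle f_1^*,f_2^*\rangle_E=\phi(f_1\otimes f_2^*)=\frac{1}{|W|}\sum_{w\in W}(U_wf_1)\otimes(U_wf_2)^*,
\]
the last equality being the direct check $w(f_1\otimes f_2^*)=(U_wf_1)\otimes(U_wf_2)^*$ from properties (i)--(ii).

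For the isometry and uniqueness, I would observe that the canonical action $\tilde U_w(f^*)=(U_wf)^*$ preserves this inner product: substituting the formula and using that $v\mapsto vw$ permutes $W$, together with the genuineness of the action on $C$, gives $\langle \tilde U_wf_1^*,\tilde U_wf_2^*\rangle_E=\langle f_1^*,f_2^*\rangle_E$. For uniqueness, any local adjoint $E'$ to $F$ arises (Lemma~\ref{ladj_ip_lemma} and the proof of Proposition~\ref{FK_proposition}) from a positive $A$-bimodular map $\psi'\colon C\to A$ via $\langle\cdot,\cdot\rangle_{E'}=\psi'(\cdot\otimes\cdot)$, with $\psi'|_A$ equal to multiplication by a positive invertible central element $z$. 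Requiring $\tilde U_w$ to be isometric for $E'$ forces $\psi'\circ w=\psi'$, whence $\psi'(T)=\psi'(\phi(T))=z\,\phi(T)$, using $W$-invariance and $\phi(C)\subseteq A$. Thus $E'$ differs from $E$ only by rescaling its inner product by $z$, and $f^*\mapsto f^*z^{-1/2}$ is a $W$-equivariant isometric isomorphism $E'\cong E$; so the isometric local adjoint is unique up to (canonical) isomorphism, with $E$ the distinguished representative.

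For the norm estimates I would reduce, via Corollary~\ref{ladj_cb_corollary} and the same functoriality argument applied to $\Phi^{-1}$, to bounding $\Phi_{A,B}$ and its inverse. Under the identifications $\Compact_B(F,B)\cong F^*$ and $\Compact_A(A,E)\cong E$, formula~\eqref{Phi_equation} shows $\Phi_{A,B}$ is the identity map $c\colon F^*\to E$ between the conjugate operator-space structure and the averaged Hilbert-module structure. Writing $\|\xi^*\|_{M_n(E)}^2=\|M_n(\phi)(\Theta)\|$ for the positive element $\Theta=\xi\otimes\xi^*\in M_n(C)$ with $\|\Theta\|=\|\xi^*\|_{M_n(F^*)}^2$ (Theorem~\ref{tensor_compact_theorem}), complete contractivity of the conditional expectation $\phi$ gives $\|M_n(\phi)(\Theta)\|\le\|\Theta\|$, hence $\|\Phi_{A,B}\|_{\cb}\le 1$. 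For the inverse, positivity gives $M_n(\phi)(\Theta)\ge |W|^{-1}\Theta$ (the $w=e$ term), so $\|\Theta\|\le |W|\,\|M_n(\phi)(\Theta)\|$ and $\|\Phi_{A,B}^{-1}\|_{\cb}\le|W|^{1/2}$, yielding the stated bound for all $\Phi_{X,Y}^{-1}$.

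Finally, to upgrade $\|\Phi_{A,B}\|_{\cb}\le 1$ to equality I would evaluate on an approximate unit. Under $\Compact_A(A,F\otimes_B E)\cong C$ the map $\Phi_{A,F}$ is the identity of $C$ carrying the Hilbert-$A$-module norm $\|T\|\mapsto\|\phi(T^*T)\|^{1/2}$ (equation~\eqref{compact_ip_equation} with $\delta=\phi$ via \eqref{ladj_compact_equation}), so for an approximate unit $u_\lambda$ of $C$,
\[
\|\Phi_{A,F}(u_\lambda)\|=\|\phi(u_\lambda^2)\|^{1/2}\longrightarrow\|\phi(1)\|^{1/2}=1,
\]
forcing $\|\Phi_{A,B}\|_{\cb}\ge\|\Phi_{A,F}\|\ge 1$. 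I expect the main obstacle to be exactly this gap between the ordinary and the completely bounded norm: the ground-level norm $\|c\|$ may be as small as $|W|^{-1/2}$ (as already in the case $B=\C$, $F$ an irreducible $W$-representation), so one must not conclude $\|\Phi\|=1$ from $n=1$, but instead track the matrix levels, where the row/column discrepancy for conjugate modules (cf.\ Example~\ref{Hilbert_cb_example}) restores the missing factor and makes $\|\Phi_{X,Y}\|_{\cb}=1$.
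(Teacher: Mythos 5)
Your proposal is correct and follows essentially the same route as the paper's proof: the averaging conditional expectation $\phi=\frac{1}{|W|}\sum_{w}w(\cdot)$, finite index via complete positivity of $|W|\phi-\id$, existence of the local adjoint through Proposition \ref{FK_proposition}, uniqueness via $W$-invariance of the induced expectation, and the norm bounds via Corollary \ref{ladj_cb_corollary} together with the complete contractivity and complete positivity of $\phi$. Where you supply more detail than the paper --- the approximate-unit argument giving $\|\Phi_{A,B}\|_{\cb}\geq 1$, and the tracking of the central element $z$ showing that uniqueness is up to a canonical rescaling isomorphism --- these are accurate refinements of steps the paper asserts without proof (only your side remark that the ground-level norm can be as small as $|W|^{-1/2}$ overshoots slightly; for an irreducible of dimension $d$ it is $d^{-1/2}\geq |W|^{-1/4}$).
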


\begin{remark}
The \emph{canonical action} on $E$ referred to in the statement of the proposition is the one coming from the identification of $E$ with $F^*$ that the local junction implies.  And the term \emph{isometric} refers to the given Banach space structure on $E$.
\end{remark}

\begin{proof}[Proof of the Proposition]
The conditional expectation 
\[ \phi: \Compact_B (F,F) \to A,\qquad \phi(T)\coloneq \frac{1}{|W|}\sum_{w\in W} w(T)\]
has finite index, since if $T\ge 0$ then 
\[
|W|\phi(T) -T =\sum_{w\neq 1} w(T) \ge 0.
\]
 Therefore the correspondence $F$ admits a local adjoint, namely the $B$-$A$-bimodule $F^*$, with $A$-valued inner product 
\[
\langle f_1^*,f_2^*\rangle = \phi(f_1 f_2^*).
\]
 The action of $W$ on $E$ is isometric, because $\phi$ is $W$-invariant. 

Suppose $G$ is another local adjoint to $F$, and let $\psi:\Compact_B(F,F) \to A$ denote the finite-index conditional expectation  induced by the inner product on $G$ as in Proposition \ref{FK_proposition}. The action of $W$ on $G$ is isometric if and only if $\psi$ is $W$-invariant, which is to say, if and only if $\psi=\phi$. Thus $E$ is unique.

For the bounds on the norms, recall from Corollary \ref{ladj_cb_corollary} that 
\[
\|\Phi_{X,Y}\|_{\cb} \leq \|\Phi_{A,B}\|_{\cb}.
\]
The same argument shows that $\|\Phi_{X,Y}^{-1}\|_{\cb}\leq \|\Phi_{A,B}^{-1}\|_{\cb}$. The conditional expectation $\phi $ is completely bounded, with $\cb$-norm equal to $1$, and this implies that $\Phi_{A,B}$ has $\cb$-norm $1$. The map $|W|\phi-\id:B\to B$ is completely positive, and this implies that 
\[
 \|\Phi_{A,B}^{-1}\|_{\cb}\leq |W|^{1/2},
 \]
 as required.
\end{proof}

\subsection{Direct Sums}
\label{dirsum_section}

Direct sums of local adjoints are not, in general, local adjoints: for example, $\C$ is an adjoint correspondence from $\C$ to $\C$, but the countable direct sum $\C^\infty \cong \ell^2 (\mathbb N)$ does not possess a local adjoint as correspondence from $\C$ to $\C$. We may however consider $\C^\infty$ as a correspondence on the $C^*$-algebraic direct sum $\bigoplus^{\infty}\C\cong C_0(\N)$, and  viewed in this way $\C^\infty$ does have a local adjoint, as we shall see.

Suppose we are given two $C^*$-algebra direct sums 
\[
A=\bigoplus_\alpha A_\alpha \quad \text{and} \quad B=\bigoplus_\beta B_\beta .
\]
Suppose we are given a map $\beta \mapsto \iota (\beta)$ from the index set for   the decomposition of $B$ to the index set for the decomposition of $A$.  For each index $\beta$ let $F_\beta $ be a correspondence from $A_{\iota(\beta)}$ to $B_\beta$. The \emph{external direct sum} is 
\[ 
F = \bigoplus_\beta  F_\beta    = \{ (f_\beta )\in \prod F_\beta \ : \ \|f_\beta \|\to 0 \text{ as }\beta \to \infty\},
\]
with the obvious $A$-$B$-bimodule structure and $B$-valued inner product.  
 
 We shall assume throughout the rest of this section that the map $\iota$ on indices  is finite-to-one; indeed we shall assume that it is uniformly finite-to-one, in the sense that 
\begin{equation}
\label{eq-finite-pt-inverse}
\sup _\beta \# \iota^{-1}[\beta] < \infty.
\end{equation}

\begin{proposition}
\label{dirsum_proposition}
 Let $F_\beta $ be a family of correspondences from $A_{\iota(\beta)}$ to $B_\beta$, as above, and assume that condition \textup{(\ref{eq-finite-pt-inverse})} holds.  Suppose that each $F_\beta $ has a local adjoint $E_\beta$, such that the isomorphisms 
 $\phi_{\beta}: F_\beta ^*\to E _\beta $
 satisfy
 \[ \sup_\beta  \{ \|\phi_{\beta }\|_{\cb},\, \|\phi_{\beta}^{-1}\|_{\cb} \} < \infty.\]
 Then $F$ has a local adjoint.
\end{proposition}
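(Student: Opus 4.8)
The plan is to build the candidate local adjoint as an external direct sum $E=\bigoplus_\beta E_\beta$, regarded as a correspondence from $B$ to $A$, and then to verify the two conditions of Theorem~\ref{ladj_bimod_theorem} by reducing each of them to the corresponding statement for the summands. The right action of $A=\bigoplus_\alpha A_\alpha$ on $E$ is prescribed so that $A_\alpha$ acts on the summand $E_\beta$ through the given $A_{\iota(\beta)}$-module structure when $\iota(\beta)=\alpha$, and by zero otherwise; the $A$-valued inner product is $\langle (e_\beta),(e'_\beta)\rangle_E = \sum_\beta \langle e_\beta, e'_\beta\rangle_{E_\beta}$, whose $\alpha$-component is the \emph{finite} sum $\sum_{\beta\in\iota^{-1}[\alpha]}\langle e_\beta,e'_\beta\rangle_{E_\beta}$. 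First I would check that $E$ is a genuine Hilbert $A$-module: finiteness of the fibers makes each $\alpha$-component well defined, and the uniform bound (\ref{eq-finite-pt-inverse}) forces these components to vanish at infinity, so the inner product takes values in $A$ and $E$ is a correspondence from $B$ to $A$.

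Next I would record how matrix norms behave under external direct sums. Since $M_n(B)=\bigoplus_\beta M_n(B_\beta)$, a direct computation with the $M_n(B)$-valued inner product shows that the operator space structure on $F=\bigoplus_\beta F_\beta$ is the $c_0$-direct sum of those on the summands, namely $\|[f_{ij}]\|_{M_n(F)}=\sup_\beta\|[f_{ij,\beta}]\|_{M_n(F_\beta)}$, and by Definition~\ref{def-operator-space-conj} the same holds for the conjugate operator space structure on $F^*$. The corresponding estimate for $E$ is the crux, and it is where the uniform finiteness of $\iota$ is used. For $[e_{ij}]\in M_n(E)$, the $M_n(A_\alpha)$-component of the Gram matrix is a sum over the finite fiber $\iota^{-1}[\alpha]$ of the positive matrices $G_\beta=[\sum_i\langle e_{ik,\beta},e_{il,\beta}\rangle]_{kl}$, where $\|G_\beta\|=\|[e_{ij,\beta}]\|^2_{M_n(E_\beta)}$. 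Positivity gives $\max_\beta\|G_\beta\|\le\|\sum_\beta G_\beta\|\le N\max_\beta\|G_\beta\|$, with $N=\sup_\alpha\#\iota^{-1}[\alpha]$ the finite bound of (\ref{eq-finite-pt-inverse}); taking the supremum over $\alpha$ yields
\[
\sup_\beta \|[e_{ij,\beta}]\|^2_{M_n(E_\beta)} \;\le\; \|[e_{ij}]\|^2_{M_n(E)} \;\le\; N\,\sup_\beta \|[e_{ij,\beta}]\|^2_{M_n(E_\beta)}.
\]
Thus the identity between $E$ and the $c_0$-direct sum of the $E_\beta$ is a completely bounded isomorphism with constants depending only on $N$.

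To finish, I would assemble the componentwise maps $\phi_\beta$ into a $B$-$A$-bimodule isomorphism $\phi=\bigoplus_\beta\phi_\beta\colon F^*\to E$ and transport the $A$-valued inner product of $E$ to $F^*$ along $\phi$, which supplies condition (a) of Theorem~\ref{ladj_bimod_theorem} and makes $\phi$ completely isometric for the two Hilbert-module operator space structures. Combining the matrix-norm identities above with the hypothesis $\sup_\beta\{\|\phi_\beta\|_{\cb},\|\phi_\beta^{-1}\|_{\cb}\}<\infty$ gives $\|\phi\|_{\cb}\le N^{1/2}\sup_\beta\|\phi_\beta\|_{\cb}$ and $\|\phi^{-1}\|_{\cb}\le\sup_\beta\|\phi_\beta^{-1}\|_{\cb}$, both finite, where now $F^*$ carries its \emph{conjugate} operator space structure. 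Factoring the identity map on $F^*$ through $\phi$ and its inverse then shows that the conjugate and Hilbert-module operator space structures on $F^*$ are completely boundedly equivalent, which is condition (b). Theorem~\ref{ladj_bimod_theorem} now yields that $\otimes_A F^*$ is a local adjoint of $\otimes_A F$.

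I expect the main obstacle to be the two-sided estimate for the operator space structure of $E$ displayed above. Unlike $F$, whose matrix norms are literally suprema over the summands, the inner product on $E$ blends together all summands $E_\beta$ whose indices share a common value of $\iota$, so the norm on $M_n(E)$ is a genuine sum rather than a supremum. Controlling that sum both above and below by the individual summand norms is exactly what forces the hypothesis that $\iota$ be uniformly finite-to-one, and the positivity of the Gram matrices $G_\beta$ is what makes the elementary two-sided bound available.
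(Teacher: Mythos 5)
Your proposal is correct and follows essentially the same route as the paper: form the external direct sum $E=\bigoplus_\beta E_\beta$, observe that $F^*=\bigoplus_\beta F_\beta^*$ as operator spaces, and conclude that $\phi=\bigoplus_\beta\phi_\beta$ is a completely bounded $B$-$A$-bimodule isomorphism, so that Theorem~\ref{ladj_bimod_theorem} applies. The only difference is one of detail rather than substance: you make explicit the two-sided Gram-matrix estimate showing the Hilbert-module operator space structure on $E$ is $\sqrt{N}$-equivalent to the $c_0$-sum structure, a point the paper's very terse proof leaves implicit, and which is indeed exactly where the hypothesis (\ref{eq-finite-pt-inverse}) is used.
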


\begin{proof}
Let  
\[
E = \bigoplus_\beta  E_\beta    = \{ (e_\beta )\in \prod E_\beta \ : \ \|e_\beta \|\to 0 \text{ as }\beta \to \infty\},
\] 
This is a correspondence from $A$ to $B$, and  the map 
\[ \phi  =  \bigoplus_\beta  \phi_{\beta} : \bigoplus_\beta  F_\beta^* \to \bigoplus E_\beta \] is an isomorphism of $B$-$A$-bimodules.  Moreover 
\[
F^* = \bigoplus _\beta F^*_\beta,
\] 
and so $\phi$ is also an isomorphism of operator spaces.
\end{proof}

\subsection{Miscellany}
\label{sec-miscellany}

\subsubsection{Units and Counits}
\label{nonunital_noncounital_example}
Let $B$ be an ideal in $A$, and consider $F=B$ as a correspondence from $A$ to $A$, with the obvious bimodule structure and the inner product $\langle b_1,b_2\rangle = b_1^*b_2$. Then $F$ is locally adjoint to itself, but the local adjunction admits neither a unit nor a counit unless $B$ is complemented in $A$. Replacing one of the two acting copies of $A$ by $B$ gives an example of a local adjunction with a unit but no counit, or vice versa.

\subsubsection{Distinct Local Adjoints}
Let $B=C[-1,1]$, and let $A=C[0,1]$, considered as a subalgebra of $B$ via the surjective map $x\mapsto |x|$, $[-1,1]\to [0,1]$. Let $F=B$, considered as a correspondence from $A$ to $B$. Define finite-index conditional expectations $\phi,\psi:B\to A$ by
\[ \phi(b)(y)\coloneq \frac{1}{2} b(y) + \frac{1}{2}b(-y) \qquad \text{and}\qquad \psi(b)(y)\coloneq \frac{2}{3} b(y) + \frac{1}{3} b(-y).\]
The associated correspondences $E=B_\phi$ and $G=B_\psi$ from $B$ to $A$ are both locally adjoint to $F$, but $E$ and $G$ are not isomorphic as Hilbert modules. Indeed, if they were isomorphic, then by polar decomposition we could find a unitary isomorphism of bimodules $U:E\to G$. Unitarity means that $\phi(b)=\psi(b U(1)^* U(1))$ for every $b\in B$, but this equality implies that the function $U(1)^* U(1)$ is discontinuous at the origin.

\subsubsection{Action by Compact Operators}
\label{Hilbert_sum_example}
Let $A_i=B_i=\C$ for $i=1,2,3,\ldots$. Let $F_i=\C^i$, with its canonical Hilbert space structure, and let $E_i=(\C^i)^*$, with its canonical Hilbert space structure. Considered as a correspondence from $\C$ to $\C$, each $F_i$ has $E_i$ as its adjoint (cf. Example \ref{unital_example}). The direct sum $F=\bigoplus F_i$ does not, however, admit a local adjoint. Indeed, the universal property of direct sums implies that the only possibility for a local adjoint would be the direct sum $E=\bigoplus E_i$, but $\bigoplus F_i^*$ and $\bigoplus E_i$ are not isomorphic as operator bimodules over $\bigoplus_i \C^i$ (cf. Example \ref{Hilbert_cb_example}). Note that $A=\bigoplus A_i$ and $B=\bigoplus B_i$ act as compact operators on $F$ and $E$ respectively, showing that condition (b) in Theorem \ref{adj_bimod_theorem} is independent of the other conditions.

\subsubsection{Simple C*-Algebras}  Finally, here is a short remark concerning simple $C^*$-algebras.

\begin{proposition}\label{ladj_simple_proposition}
Let $\Phi$ be a local  adjunction as in Definition \ref{ladj_definition}.
  \begin{enumerate}[\rm (a)]
   \item If $A$ is simple, $\Phi$ admits a unit.
   \item If $B$ is simple, $\Phi$ admits a counit.
   \item If $A$ and $B$ are simple, $\Phi$ extends to an adjunction.
  \end{enumerate}
\end{proposition}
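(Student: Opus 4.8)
My plan is to reduce everything to part (a). Part (b) follows from (a) applied to the transposed local adjunction $\Phi^*$ of \eqref{eq-psi-from-phi2}, under which the roles of $A$ and $B$, and of $F$ and $F^*$, are interchanged, and under which units and counits are swapped (if $\eta$ is a unit for $\Phi$ then $\eta^*$ is a counit for $\Phi^*$, and conversely): thus ``$B$ simple $\Rightarrow$ counit for $\Phi$'' is literally ``$B$ simple $\Rightarrow$ unit for $\Phi^*$''. Part (c) then follows by combining (a) and (b) with Corollary \ref{adj_iff_ladj_corollary}, which says that a local adjunction extends to an adjunction exactly when it admits both a unit and a counit. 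So I concentrate on proving that, when $A$ is simple, the local adjunction admits a unit.

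By the equivalence of conditions (a) and (c) in Proposition \ref{unit_proposition}, producing a unit is the same as showing that the structure homomorphism $\eta\colon A\to \Bounded_B(F,F)$ takes values in the ideal $\Compact_B(F,F)$. The first step is to observe that $I\coloneq \eta^{-1}\bigl(\Compact_B(F,F)\bigr)$ is a closed two-sided ideal of $A$, being the preimage of the closed ideal $\Compact_B(F,F)\subseteq\Bounded_B(F,F)$ under a $*$-homomorphism. Since $A$ is simple, $I$ is either $0$ or all of $A$; if $I=A$ then Proposition \ref{unit_proposition} produces the unit and we are done. The entire problem is therefore to rule out $I=0$, that is, to exhibit a single nonzero element of $A$ acting on $F$ by a compact operator.

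For this remaining step I would exploit the completely positive $A$-bimodule map $\delta\colon F\otimes_B F^*\to A$ of Lemma \ref{ladj_ip_lemma}, read through the identification $F\otimes_B F^*\cong\Compact_B(F,F)$ of \eqref{ladj_compact_equation}. Its bidual $\delta''\colon\Compact_B(F,F)''\to A''$ sends the identity to a positive element $m\coloneq\delta''(1)\in A''$, and the bimodule identities $\delta(\eta(a)K)=a\,\delta(K)$ and $\delta(K\eta(a))=\delta(K)a$ pass to the bidual to give $am=ma=\delta''(\eta(a))$ for every $a\in A$; hence $m$ lies in the centre $Z(A'')$. Moreover the equivalence of operator space structures in Theorem \ref{ladj_bimod_theorem}(b) forces $\delta$ to be bounded below on positive elements (by the computation in the proof of Proposition \ref{FK_proposition}), which pins $m$ away from $0$. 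The goal is to upgrade this to $m\in M(A)$: that is precisely the statement that $F$ has \emph{finite right index} in the sense of \cite{KPW}, and by \cite[Theorem 2.22]{KPW}, as recorded in Section \ref{sec-indexes}, finite right index is equivalent to $A$ acting on $F$ by compact operators, hence to $I=A$. The main obstacle is exactly this last implication: turning the central, bounded-below index element $m\in Z(A'')$ into an honest multiplier of $A$. The leverage provided by simplicity is that a simple $C^*$-algebra has only scalar central multipliers, so once $m$ is known to lie in $M(A)$ it is forced to be a nonzero scalar; the substance of the argument is to show that simplicity of $A$, together with the bounded-below property coming from the local adjunction, already guarantees $m\in M(A)$ and not merely $m\in Z(A'')$. (As a consistency check, once $I=A$ one recovers from Corollary \ref{decomposition_corollary} that $A=A_{F^*}$, in agreement with simplicity.)
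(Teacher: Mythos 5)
Your reductions are exactly the ones the paper makes: part (b) follows from (a) by passing to the transposed adjunction $\Phi^*$, part (c) follows from (a) and (b) via Corollary \ref{adj_iff_ladj_corollary}, and for (a) you correctly use Proposition \ref{unit_proposition} to translate the existence of a unit into the statement that the structure homomorphism $\eta\colon A\to\Bounded_B(F,F)$ takes values in $\Compact_B(F,F)$. The observation that $\eta^{-1}\bigl(\Compact_B(F,F)\bigr)$ is a closed two-sided ideal, so that simplicity reduces everything to exhibiting a single nonzero element of $A$ acting compactly, is also fine.

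The problem is that your proof stops exactly where it needed to begin. The implication ``$A$ simple $\Rightarrow$ $A$ acts on $F$ by compact operators'' (equivalently, $F$ has finite right index, i.e.\ $m=\delta''(1)$ lies in $M(A)$) \emph{is} the entire content of part (a), and it is precisely the step that the paper imports wholesale from \cite[Corollary 2.26]{KPW}. You set up the index element $m\in Z(A'')$ and then write that ``the substance of the argument is to show that simplicity of $A$ \dots\ guarantees $m\in M(A)$'' --- that is, you name the missing step rather than prove it; this is a genuine theorem in \cite{KPW}, not a routine verification. Moreover, the leverage you attribute to simplicity is circular: the fact that a simple $C^*$-algebra has only scalar central multipliers is usable only \emph{after} one knows $m\in M(A)$, which is the very thing to be proved, and centrality of $m$ in $A''$ carries almost no information by itself, since the centre of the enveloping von Neumann algebra $A''$ is typically enormous even when $A$ is simple. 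Finally, the ``bounded below'' property you extract from the computation in Proposition \ref{FK_proposition} is a norm inequality $\|\delta(T)\|\geq c\|T\|$ on positive $T$; at best this shows $m\neq 0$, which is far weaker than the required multiplier membership. In short, the skeleton of your argument agrees with the paper's, but the one nontrivial ingredient --- supplied in the paper by the citation of \cite[Corollary 2.26]{KPW} --- is absent.
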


\begin{proof}
Item (a) follows from Proposition \ref{unit_proposition} and \cite[Corollary 2.26]{KPW}. Item (b) follows by symmetry from (a).   Item (c) follows from (a) and (b) by Corollary \ref{adj_iff_ladj_corollary}.
\end{proof}

\section{Parabolic induction}\label{parabolic_section}

In this   section we shall analyze the correspondence associated in \cite{Clare_pi} and \cite{CCH_ups} to parabolic induction of tempered unitary group representations.

We shall work with a fixed real reductive group $G$ and   standard parabolic subgroup  $P$ with Levi factor $L$ and unipotent radical $N$ (thus $P=LN $ is a semidirect product, with  $L$ acting on $N$).  
We shall be following the precise conventions of \cite[Section 3]{CCH_ups}, but for instance we might take $G$ to be $GL(n,\R)$, and   $P$  to be  a block upper triangular subgroup of $GL(n,\R)$, decomposed as a semidirect product of the block upper diagonal matrices acting on the unipotent block upper triangular matrices (with identity matrix blocks down the diagonal).

The groups $G$ and $L$ act on the homogenous space $G/N$ on the left and right, respectively. The vector space $C_c^\infty (G/N)$ carries  left and right   actions of $G$ and $L$, respectively\footnote{In the case of the $L$-action, there is an adjustment to the standard action that reflects the fact that we should identify $C_c(G/N)$ with the space of compactly supported half-densities on $G/N$.} and it may be completed to a Hilbert $C^*_r (L)$-module $C^*_r (G/N)$  on which $C^*_r (G)$ acts on the left through bounded adjointable operators.  See \cite[Section 2]{Clare_pi} or \cite[Section 4]{CCH_ups}.   It is this correspondence, from $C^*_r(G)$ to $C^*_r(L)$, that we shall study in this section. 

The importance of the correspondence  $C^*_r (G/N)$ is that   the associated  tensor product functor from Hilbert space representations of $C^*_r (L)$ to $C^*_r (G)$ may be identified with the functor of parabolic induction from tempered unitary representations of $L$ to tempered unitary representations of $G$. See \cite[Section 3]{Clare_pi}. In \cite[Definition 8.2]{CCH_ups} we defined a correspondence $C^*_r(N\backslash G)$ from $C^*_r(L)$ to $C^*_r(G)$ (the construction is outlined below). Here we shall prove the following result.

\begin{theorem} 
\label{main-reductive-thm}
The correspondences $C^*_r(G/N)$ and $C^*_r(N\backslash G)$ are local adjoints.
\end{theorem}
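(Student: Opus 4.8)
The plan is to verify the criterion of Theorem~\ref{ladj_bimod_theorem} for the correspondence $F = C^*_r(G/N)$ from $A = C^*_r(G)$ to $B = C^*_r(L)$, showing that $C^*_r(N\backslash G)$ may be identified with the conjugate $F^*$ carrying a compatible $A$-valued inner product. First I would recall from \cite{Clare_pi} and \cite{CCH_ups} the explicit descriptions of both correspondences on the dense subalgebras $C_c^\infty(G/N)$ and $C_c^\infty(N\backslash G)$. The inversion map $gN \mapsto Ng^{-1}$ identifies these two spaces and interchanges the left $G$-action with the right $G$-action, and the right $L$-action with the left $L$-action; consequently, as a $C^*_r(L)$-$C^*_r(G)$-bimodule, $C^*_r(N\backslash G)$ is canonically isomorphic to the conjugate bimodule $C^*_r(G/N)^*$ of Definition~\ref{conjugate_definition2}. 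Transporting the given $C^*_r(G)$-valued inner product on $C^*_r(N\backslash G)$ along this isomorphism equips $F^*$ with an $A$-valued inner product, so that condition (a) of Theorem~\ref{ladj_bimod_theorem} holds by construction. It then remains only to verify condition (b): that the conjugate operator space structure on $F^*$ is completely boundedly equivalent to the structure coming from this inner product.

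The verification of condition (b) is the heart of the matter, and I would carry it out through the finite-numerical-index criterion of Section~\ref{sec-indexes}. By the characterization established there, $C^*_r(N\backslash G)$ is a local adjoint to $C^*_r(G/N)$ precisely when the two inner-product maps
\[
\epsilon \colon \Compact_{C^*_r(G)}\bigl(C^*_r(N\backslash G)\bigr) \longrightarrow C^*_r(L), \qquad \delta \colon \Compact_{C^*_r(L)}\bigl(C^*_r(G/N)\bigr) \longrightarrow C^*_r(G)
\]
of Lemma~\ref{ladj_ip_lemma}, defined using the identifications \eqref{ladj_compact_equation}, are both bounded. Unwinding the definitions and the bimodule identification above, these two maps are given on the dense function algebras by the natural convolution formulae for the $C^*_r(L)$- and $C^*_r(G)$-valued inner products, that is, by integration over the homogeneous spaces $G/N$ and $N\backslash G$ respectively. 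I would therefore reduce the theorem to the pair of norm estimates asserting that each of $\epsilon$ and $\delta$ is bounded; each such estimate is a comparison between one of the inner products and the ambient $C^*$-algebra operator norm on the relevant algebra of compact operators.

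The main obstacle is exactly these two boundedness estimates, and I expect to resolve them by disintegrating over the tempered dual. Via the Harish-Chandra Plancherel decomposition, $C^*_r(G)$ and $C^*_r(L)$ are continuous fields of elementary $C^*$-algebras over their tempered spectra, and correspondingly $C^*_r(G/N)$ and $C^*_r(N\backslash G)$ decompose as fields of correspondences indexed by the tempered parameters of $L$. Over a generic parameter the induced representation carries only \emph{finite} multiplicities, controlled by the finite group of self-intertwining operators (the $R$-group) of the induced representation, while the fiberwise inner-product maps are governed by the Plancherel density, which is bounded on the tempered spectrum. I would combine the resulting uniform fiberwise bounds with the matrix-level stability furnished by Theorem~\ref{ladj_cb_theorem} and Corollary~\ref{ladj_cb_corollary} to obtain the complete boundedness demanded by condition (b). Finally, it is worth emphasising that one should \emph{not} expect a unit or a counit here, since $C^*_r(G)$ does not act on $C^*_r(G/N)$ by $C^*_r(L)$-compact operators (the fibers are infinite dimensional); this is precisely the reason that local adjunction, rather than genuine adjunction, is the correct statement, and it is through Theorem~\ref{rep-adjunction-theorem} that this local adjunction will yield the two-sided adjunction between the associated functors of parabolic induction and restriction on categories of tempered representations.
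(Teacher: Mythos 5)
Your reduction is sound as far as it goes: identifying $C^*_r(N\backslash G)$ with the conjugate bimodule $F^*$ and reducing local adjointness to the boundedness of the two inner-product maps $\epsilon$ and $\delta$ (equivalently, finite numerical index, equivalently condition (b) of Theorem~\ref{ladj_bimod_theorem}) is the right framework, and it matches the machinery the paper uses. The gap is in how you propose to prove those bounds. Your key claim --- that the fiberwise inner-product comparisons are ``governed by the Plancherel density, which is bounded on the tempered spectrum'' --- is false as stated: the Plancherel density of a real reductive group grows polynomially in the continuous parameters and is unbounded on the tempered dual (already for $\SL(2,\R)$). More to the point, the Plancherel density is not what controls the comparison at all. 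The mechanism your sketch is missing is the fixed-point structure: $C^*_r(G)$ and $C^*_r(L)$ decompose into direct summands $A_{\iota(\beta)}$ and $B_\beta$ indexed by associate classes, with $\iota$ uniformly finite-to-one; the summand $F_\beta$ of $C^*_r(G/N)$ satisfies $\Compact_B(F_\beta,F_\beta)\cong \Compact_{C_0(Z_\beta)}(\mathcal M_\beta,\mathcal M_\beta)^{W_\beta(L)}$, and $A_{\iota(\beta)}$ sits inside this algebra as the fixed-point algebra of a larger finite group $W_\beta(G)$ acting projectively by twisted automorphisms. Averaging over $W_\beta(G)$ gives a conditional expectation of finite index (Proposition~\ref{W_uniqueness_proposition}, via Proposition~\ref{FK_proposition}), and this is what produces both the $A_{\iota(\beta)}$-valued inner product on $F_\beta^*$ and the bound $\|\Phi^{-1}\|_{\cb}\leq |W_\beta(G)|^{1/2}$; the uniform bound over $\beta$ comes from the uniform finiteness of the groups $W_\beta(G)$, not from any spectral density estimate, and one concludes by the direct-sum criterion of Proposition~\ref{dirsum_proposition}. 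You do gesture at the $R$-groups, which are indeed the relevant finite groups, but ``finite multiplicity at generic parameters'' by itself yields no uniform completely bounded comparison of the two operator space structures on $F^*$, and that comparison is the entire content of the theorem.

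A second, smaller error: your closing remark that one should not expect a unit because ``$C^*_r(G)$ does not act on $C^*_r(G/N)$ by $C^*_r(L)$-compact operators'' contradicts a fact used elsewhere in the paper: $C^*_r(G)$ \emph{does} act by compact operators on $C^*_r(G/N)$ (\cite[Proposition 4.4]{CCH_ups}, as cited in the proof of Proposition~\ref{no-full-adjoint-proposition}), so by Proposition~\ref{unit_proposition} this local adjunction admits a unit. What fails is the counit, i.e. the compactness of the $C^*_r(L)$-action on $C^*_r(N\backslash G)$; that is why the adjunction is only local rather than genuine.
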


\begin{proof} The $C^*$-algebras $A=C^*_r (G)$ and $B= C^*_r (L)$ both decompose into direct sums 
\[
A = \bigoplus _\alpha A_\alpha \quad \text{and}  \quad B = \bigoplus_\beta  B_\beta .
\]
The index sets are ``associate classes'' of pairs consisting of a parabolic subgroup   in $G$ or $L$, respectively, and a discrete series representation of the compactly generated part of the parabolic.  See \cite[Proposition 5.16 and Theorem 6.8]{CCH_ups}.

There is a finite-to-one map $\iota$ from the index set for $B$ to the index set for $A$ which comes from the enlargement of standard parabolic subgroups of $L$ to standard parabolic subgroups of $G$ (for instance in the case of $GL(n,\R)$, a parabolic subgroup of $L$ consists of a block diagonal group with each block itself a block upper triangular group, and this may be naturally extended to a block upper triangular subgroup of $GL(n,\R)$).   The enlargement process preserves Levi factors. The map  $\iota$  of associate classes is finite-to-one because non-associate pairs for $B$ can become associate when enlarged to become pairs for $A$. There is a uniform bound on the size of the point inverse images of $\iota$.   

The correspondence $F = C^*_r (G/N)$ from $A$ to $B$  decomposes into an orthogonal  direct sum 
\[
F = \bigoplus_\beta  F_\beta 
\]
with the same index set as $B$.  The inner products of elements in $F_\beta$ lie within the summand $B_\beta \subseteq B$.  The action of $A$ on $F_\beta$ factors through the projection onto the summand $A_{\iota (\beta)}\subseteq A$.

The structure of $F_\beta$ is a little complicated to recall here (see \cite[Section 7]{CCH_ups}).  But what is important for us is the structure of the $C^*$-algebra $\Compact _B (F_\beta, F_\beta)$, which is given by a formula of the type 
\[
\Compact_B (F_\beta, F_\beta) \cong  \Compact_{C_0(Z_\beta)} (  \mathcal M_\beta , \mathcal M _\beta )^{W_\beta (L)} , 
\]
where, to use notation that has been streamlined from \cite{CCH_ups}, 
\begin{enumerate}[\rm (a)]
\item 
$\mathcal M_\beta $ is a continuous field of parabolically induced tempered unitary representations of $G$ over a locally compact space $Z_\beta$ (or in other words a correspondence from $C^*_r (G)$ to $C_0(Z_\beta)$).

\item $W_\beta (L)$ is finite group acting on $Z_\beta$ and also acting projectively by twisted automorphisms on $\mathcal M_\beta$.
 \end{enumerate}
The action of $A= C^*_r (G)$ on $F_\beta$ is through the inclusion of the summand $A_{\iota(\beta)}$ as a fixed point algebra
\[
A_{\iota(\beta)} = 
\Compact_{C_0(Z_\beta)} (  \mathcal M_\beta , \mathcal M _\beta )^{W_\beta (G)} ,
\]
where $W_\beta (G)$ is a finite group containing $W_\beta (L)$ as a subgroup that also acts on $Z_\beta$,  and    on $\mathcal M_\beta$ projectively, by twisted automorphisms.

It follows from Proposition~\ref{W_uniqueness_proposition} that $F_\beta$ has a local adjoint $F_\beta^*$, whose $A_{\iota(\beta)}$-valued inner product is given by averaging over $W_{\beta}(G)$.  The finite groups $W_\beta (G)$ are uniformly bounded in size, and so it follows from Proposition \ref{dirsum_proposition} that $F = \oplus_\beta F_\beta$ has a local adjoint too, namely the direct sum $\oplus_\beta F_\beta^*$. The correspondence $C^*_r(N\backslash G)$ was defined in \cite{CCH_ups} to be precisely this direct sum.
\end{proof}

 We obtain from  Theorem~\ref{rep-adjunction-theorem} the following consequence:
 
\begin{theorem}
\label{reductive-Hilbertspace-theorem}
The functor of parabolic induction, from tempered unitary Hilbert space representations of $L$ to tempered unitary representations of $G$, possesses a two-sided adjoint. \qed
\end{theorem}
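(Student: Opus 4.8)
The plan is to deduce this statement directly from the two theorems that immediately precede it, observing that both parabolic induction and its putative adjoint are the Hilbert-space-representation functors attached to correspondences that have already been shown to be local adjoints. Recall from the discussion at the start of Section~\ref{parabolic_section} that the tensor product functor on Hilbert space representations associated to the correspondence $F = C^*_r(G/N)$, from $C^*_r(G)$ to $C^*_r(L)$, is precisely the functor of parabolic induction from tempered unitary representations of $L$ to those of $G$; this identification is the content of \cite[Section~3]{Clare_pi}. The candidate for the two-sided adjoint is then the functor on Hilbert space representations attached to $E = C^*_r(N\backslash G)$, namely parabolic restriction as introduced in \cite{CCH_ups}.

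The first step is to invoke Theorem~\ref{main-reductive-thm}, just proved above: the correspondences $F = C^*_r(G/N)$ and $E = C^*_r(N\backslash G)$ are local adjoints. The second and final step is to apply Theorem~\ref{rep-adjunction-theorem}, which upgrades any local adjunction between a pair of correspondences to a genuine two-sided adjunction between the induced functors on the categories of nondegenerate Hilbert space representations. Stringing these two inputs together, the parabolic induction functor $F\colon {}_{C^*_r(L)}\H \to {}_{C^*_r(G)}\H$ acquires a two-sided adjoint, namely the functor $E\colon {}_{C^*_r(G)}\H \to {}_{C^*_r(L)}\H$ induced by $C^*_r(N\backslash G)$.

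The essential mathematical work has therefore already been discharged: it resides entirely in Theorem~\ref{main-reductive-thm}, whose proof rests on the direct-sum decompositions of $C^*_r(G)$ and $C^*_r(L)$ indexed by associate classes, on the description of each compact-operator algebra $\Compact_B(F_\beta, F_\beta)$ as the fixed-point algebra of a finite group acting projectively by twisted automorphisms, and on the uniform bounds supplied by Propositions~\ref{W_uniqueness_proposition} and \ref{dirsum_proposition}. Granting that, the passage from correspondences to representation functors via Theorem~\ref{rep-adjunction-theorem} is purely formal, and the only point requiring any care is the bookkeeping identification of the functor induced by $C^*_r(N\backslash G)$ with the parabolic restriction functor of \cite{CCH_ups}. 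Accordingly I anticipate no genuine obstacle at this stage: the theorem follows at once by combining Theorems~\ref{main-reductive-thm} and \ref{rep-adjunction-theorem}.
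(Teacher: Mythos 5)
Your proposal is correct and follows exactly the paper's own route: the paper derives Theorem~\ref{reductive-Hilbertspace-theorem} immediately by combining Theorem~\ref{main-reductive-thm} (the correspondences $C^*_r(G/N)$ and $C^*_r(N\backslash G)$ are local adjoints) with Theorem~\ref{rep-adjunction-theorem} (a local adjunction of correspondences yields a two-sided adjunction between the induced functors on Hilbert space representation categories), using the identification of the induced functor with parabolic induction from \cite{Clare_pi}. Nothing further is needed.
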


Let us note for comparison that, while there is a $C^*$-correspondence $C^*(G/N)$ implementing parabolic induction for \emph{full} group $C^*$-algebras (see \cite{Clare_pi}), there is no analogue of Theorem \ref{main-reductive-thm} in this setting:

\begin{proposition}
 \label{no-full-adjoint-proposition}
 Let $G=\SL(2,\R)$, and let $P=LN$ be the parabolic subgroup of upper-triangular matrices. The $C^*$-correspondence $C^*(G/N)$, from $C^*(G)$ to $C^*(L)$, does not possess a local adjoint. 
\end{proposition}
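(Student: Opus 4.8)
The plan is to argue by contradiction, exploiting the elementary fact that the support of a Hilbert module over a $C^*$-algebra is an \emph{open} subset of the spectrum, together with the harmonic-analytic fact that the tempered principal series---which is all that the correspondence $C^*(G/N)$ ``sees''---fails to be open inside the full dual $\widehat{C^*(G)}$. This non-openness is precisely the feature present in the full dual and absent from the reduced dual, where the tempered principal series form a clopen piece and the construction of Theorem~\ref{main-reductive-thm} applies.

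First I would set up the module-theoretic skeleton. Suppose $F=C^*(G/N)$ admits a local adjoint. By Theorem~\ref{ladj_bimod_theorem}(a) the conjugate $F^*$ then carries a $C^*(G)$-valued inner product making it a correspondence from $B=C^*(L)$ to $A=C^*(G)$; in particular $F^*$ is a Hilbert $A$-module. Writing $A_{F^*}=\overline{\lspan}\,\langle F^*,F^*\rangle_A$ for the closed two-sided ideal generated by its inner products, the support of $F^*$---the set of $\pi\in\widehat A$ with $F^*\otimes_A\H_\pi\neq 0$---equals $\widehat{A_{F^*}}$ and is therefore \emph{open} in $\widehat{C^*(G)}$.

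Next I would compute this support and show it equals the set $T$ of tempered spherical principal series $\pi_{it}=\Ind_P^G\chi_{it}$ ($t\in\R$). Every such $\pi_{it}$ lies in the support, since $F^*\otimes_A\H_{\pi_{it}}$ is the parabolic restriction of $\pi_{it}$ furnished by Theorem~\ref{rep-adjunction-theorem}, which contains the unitary character(s) $\chi_{\pm it}$ of $L$ and is nonzero. Conversely, a non-tempered irreducible $\pi$ of $G$---a spherical complementary series $\pi_\nu$ ($0<\nu<1$), the trivial representation, or a discrete series---has $F^*\otimes_A\H_\pi=0$: using that $\Ind_P^G V$ is tempered and irreducible for each unitary character $V$ of $L$ while such $\pi$ is not weakly contained in the tempered dual, the adjunction of Theorem~\ref{rep-adjunction-theorem} gives $\Hom_{C^*(G)}(\Ind_P^G V,\pi)=0=\Hom_{C^*(G)}(\pi,\Ind_P^G V)$, so the restriction $F^*\otimes_A\H_\pi$ has no nonzero atomic part; the only candidate for a remaining contribution is built from the non-unitary character $\delta_P^{\pm 1/2}$ of $L$, which supports no nonzero vector of a unitary representation of $C^*(L)$, forcing the restriction to vanish altogether. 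Hence $\supp(F^*)=T$.

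Finally I would draw the contradiction from the harmonic analysis of $\SL(2,\R)$: as $\nu\to 0^+$ the spherical complementary series $\pi_\nu$ converge in the Fell topology of $\widehat{C^*(G)}$ to the tempered endpoint $\pi_0=\pi_{it}\restrict_{t=0}$ (the spherical functions $\varphi_\nu$ converge uniformly on compacta to $\varphi_0$, and $\pi_0$ is irreducible). Thus $\pi_0\in T=\supp(F^*)$ is a limit of points $\pi_\nu\notin\supp(F^*)$, so $T$ is not open---contradicting the openness established above. The main obstacle is the vanishing $F^*\otimes_A\H_{\pi_\nu}=0$ for the complementary series: ruling out an atomic restriction is immediate from temperedness and the adjunction, but ruling out a continuous contribution is where the non-unitarity of $\delta_P^{1/2}$ enters decisively, and this is exactly the phenomenon---invisible in the reduced setting---whose topological shadow is the accumulation $\pi_\nu\to\pi_0$ used in the last step.
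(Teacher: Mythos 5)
Your overall strategy---openness of the support of the hypothetical adjoint module, membership of $\pi_0$ in that support, non-membership of the complementary series, and the accumulation $\pi_\nu\to\pi_0$ in $\widehat{C^*(G)}$---is sound in outline, and it ends with the same harmonic-analytic punchline as the paper (non-openness of the principal series). The first step is correct: if a local adjoint exists then by Theorem \ref{ladj_bimod_theorem} the conjugate $F^*$ is a Hilbert $C^*(G)$-module, and its support $\widehat{A_{F^*}}$ is open. The positive direction is also fine: by Theorem \ref{rep-adjunction-theorem}, $\Hom_{C^*(L)}(F^*\otimes_A\H_{\pi_0},\chi_0)\cong\Hom_{C^*(G)}(\pi_0,\Ind_P^G\chi_0)\neq 0$, so $\pi_0$ lies in the support. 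The genuine gap is the negative step, $F^*\otimes_A\H_{\pi_\nu}=0$ for the complementary series. All the two-sided adjunction gives you is that no \emph{irreducible} unitary representation of $L$ admits a nonzero intertwiner to or from $F^*\otimes_A\H_{\pi_\nu}$. Since $L\cong\R^\times$ is a noncompact abelian group, a nonzero nondegenerate representation of $C^*(L)$ may have no irreducible subrepresentations whatsoever (e.g.\ $L^2(\R^\times)$), so ``no atomic part'' does not force vanishing. Your attempt to exclude a continuous contribution by invoking the non-unitary character $\delta_P^{\pm 1/2}$ imports intuition from smooth Jacquet-module theory that has no standing here: nothing is known about the hypothetical adjoint beyond its existence, and in the $C^*$-setting the only question is whether a continuous direct integral of \emph{unitary} characters can occur---which your argument never addresses. (A smaller inaccuracy: the support cannot equal only the \emph{spherical} tempered principal series, since the non-spherical induced representations contribute as well; but this is harmless, as you only need $\pi_0$ inside and $\pi_\nu$ outside.)

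The gap can be closed, but by the mechanism the paper actually uses rather than by the representation-category adjunction. Any local adjoint is, as an operator $B$-$A$-bimodule, canonically the conjugate $F^*$ (Theorem \ref{ladj_bimod_theorem}, Lemma \ref{ladj_uniqueness_lemma}), so its right $C^*(G)$-action is $f^*\cdot a=(a^*f)^*$ and hence annihilates the ideal $J=\ker\bigl(C^*(G)\to\Bounded_{C^*(L)}(F,F)\bigr)$. Since $L$ is amenable, $C^*(G/N)=C^*_r(G/N)$ and the left action of $C^*(G)$ factors through $C^*_r(G)$; thus $J\supseteq\ker\bigl(C^*(G)\to C^*_r(G)\bigr)$ and every irreducible representation annihilating $J$ is tempered. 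As $\pi_\nu$ is not tempered, $\pi_\nu(J)\neq 0$, so by irreducibility $\pi_\nu(J)\H_{\pi_\nu}$ is dense, and then $F^*\otimes_A\H_{\pi_\nu}=0$ because every elementary tensor $f^*\otimes\pi_\nu(j)\xi$ equals $f^*\cdot j\otimes\xi=0$. With this substitution your proof goes through. For comparison, the paper's own proof does not compute supports: it uses the fact that $C^*_r(G)$ acts by \emph{compact} operators on $C^*_r(G/N)$ \cite{CCH_ups}, so that Proposition \ref{unit_proposition} and Corollary \ref{decomposition_corollary} force $J$ to be a complemented ideal of $C^*(G)$, making its hull---the principal series---clopen in $\widehat{G}$; the same accumulation of the complementary series then yields the contradiction. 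Both routes require the amenability of $L$ and the non-openness of the principal series; the paper's additionally uses compactness of the action, whereas yours, once repaired as above, needs only the bimodule structure of the adjoint.
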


\begin{proof}
Suppose that $C^*(G/N)$ does have a local adjoint.  Since $L$ is amenable we in fact have $C^*(G/N)=C^*_r(G/N)$, and the left action of $C^*(G)$ factors through the quotient mapping $C^*(G)\to C^*_r(G)$. Now $C^*_r(G)$ acts on $C^*_r(G/N)$ by compact operators (see \cite[Proposition 4.4]{CCH_ups}), and so Proposition \ref{unit_proposition} and Corollary \ref{decomposition_corollary} imply that the kernel $J$ of the action map $C^*(G)\to \Bounded_{C^*(L)}(C^*(G/N),C^*(G/N))$ is a direct summand in $C^*(G)$. The closed subset of the unitary dual $\widehat{G}$ corresponding to the ideal $J$ is precisely the  {principal series}, i.e. the set of irreducible constituents of parabolically induced representations of $G$. The principal series is not open in $\widehat{G}$, as there is a  {complementary series} of representations whose closure intersects the princial series (see e.g. \cite{Milicic}). Therefore $J$ is not a direct summand of $C^*(G)$, and $C^*(G/N)$ has no local adjoint.  
\end{proof}

We conclude with a remark.  The determination of the structure of  the correspondence $C^*_r (G/N)$ relies very heavily on the classification results on the tempered representations due to Harish-Chandra, Langlands and others.  It is an interesting open problem to approach the construction of a locally  adjoint correspondence  from a more  geometric starting point, without relying so heavily on representation theory.

 \bibliography{hmod_adj}{}
 \bibliographystyle{alpha}

\end{document}